\renewcommand*{\backref}[1]{}
\renewcommand*{\backrefalt}[4]{[{\tiny%
		\ifcase #1 Not cited.%
		\or Cited on page~#2.%
		\else Cited on pages #2.%
		\fi%
	}]}
\def\mathbi#1{\textbf{\em #1}}
\renewenvironment{proof}{{ \textbf{Proof}.}}{\qed}
\newtheorem{Thm}{Theorem}[section]
\newtheorem{Lem}[Thm]{Lemma}
\newtheorem{Def}[Thm]{Definition}
\newtheorem{Cor}[Thm]{Corollary}
\newtheorem{Prop}[Thm]{Proposition}
\newtheorem{Ex1}[Thm]{Example}
\newtheorem{Rem1}[Thm]{Remark}
\newcommand{\cone}{\mathrm{Cone}}
\newcommand{\Hom}{\mathrm{Hom}}
\newcommand{\add}{\mathrm{add}}
\newcommand{\per}{\mathrm{per}}
\newcommand{\Ext}{\mathrm{Ext}}
\newcommand{\End}{\mathrm{End}}
\newcommand{\bijar}[1][]{%
	\ar[#1]
	\ar@<0.7ex>@{}[#1]|-*[@]{\sim}}
\newcommand{\pvd}{\mathrm{pvd}}
\newcommand{\cten}{\widehat{\otimes}}
\newcommand{\Bd}{\mathrm{Bd}}
\newcommand{\PcAlgc}{\mathrm{PcAlgc}(R)}
\newcommand{\PcAlg}{\mathrm{PcAlg}(R)}
\newcommand{\bL}{\mathbf{L}}
\newenvironment{Rem}{\begin{Rem1}\rm}{\end{Rem1}}
\newenvironment{Ex}{\begin{Ex1}\rm}{\end{Ex1}}
\newcommand{\Si}{\Sigma}
\newcommand{\ten}{\otimes}
\newcommand{\lten}{\overset{\boldmath{L}}{\ten}}
\newcommand{\lcten}{\widehat{\ten}^{\mathbf{L}}}
\newcommand{\cof}{\mathrm{cof}}
\newcommand{\RHom}{\mathrm{\mathbf{R}Hom}}
\newcommand{\ra}{\rightarrow}
\newcommand{\iso}{\xrightarrow{_\sim}}
\newcommand{\id}{\mathbf{1}}
\newcommand{\ca}{{\mathcal A}}
\newcommand{\cb}{{\mathcal B}}
\newcommand{\cc}{{\mathcal C}}
\newcommand{\cd}{{\mathcal D}}
\newcommand{\cf}{{\mathcal F}}
\newcommand{\cg}{{\mathcal G}}
\newcommand{\ch}{{\mathcal H}}
\newcommand{\cm}{{\mathcal M}}
\newcommand{\ct}{{\mathcal T}}
\begin{document}
	
	\title{Categorification of ice quiver mutation}
	
	

	\author{Yilin WU}
	\address{
		School of Mathematical Sciences\\
		University of Science and Technology of China\\
		Hefei 230026, Anhui\\
		P. R. China
		and
		Universit\'e Paris Cit\'e\\
		UFR de Math\'ematiques\\
		Institut de Math\'ematiques de Jussieu--Paris Rive Gauche, IMJ-PRG \\
		B\^{a}timent Sophie Germain\\
		75205 Paris Cedex 13\\
		France
	}
	\email{wuyilinecnuwudi@gmail.com\\
		yilin.wu@imj-prg.fr}
	
	
	\dedicatory{}
	
	\keywords{Ice quiver mutation, relative Ginzburg algebra, derived equivalence, mutation at frozen vertices}
	
	\begin{abstract}
		 In 2009, Keller and Yang categorified quiver mutation by interpreting it in terms of equivalences between derived categories. Their approach was based on Ginzburg’s Calabi--Yau algebras and on Derksen--Weyman--Zelevinsky’s mutation of quivers with potential. Recently, Pressland has generalized mutation of quivers with potential to that of ice quivers with potential. In this paper, we show that his rule yields derived equivalences between the associated relative Ginzburg algebras, which are special cases of Yeung’s deformed relative Calabi--Yau completions arising in the theory of relative Calabi--Yau structures due to Toën and Brav--Dyckerhoff. We illustrate our results on examples arising in the work of Baur--King--Marsh on dimer models and cluster categories of Grassmannians. We also give a categorification of mutation at frozen vertices as it appears in recent work of Fraser--Sherman-Bennett on positroid cluster structures.
	\end{abstract}
	
	\maketitle
	\tableofcontents
	
	\section{Introduction}
	Almost 20 years ago, Fomin and Zelevinsky~\cite{fominClusterAlgebrasFoundations2002} invented cluster algebra in order to create a combinatorial framework for the study of canonical bases in quantum groups and the study of total positivity in algebraic groups. A cluster algebra is a commutative algebra with a distinguished family of generators, called cluster variables, displaying special combinatorial properties. To construct it, we start with a seed $$ (X=(x_{1},\ldots,x_{n},x_{n+1},\ldots,x_{m}),B) ,$$ consisting, by definition, of a set $ X $ which freely generates an ambient field $ \cf=\mathbb{Q}(x_{1},\ldots,x_{m}) $, and an integer $ m\times n $ matrix $ B=(b_{ij}) $ such that the principal part $ B'=(b_{ij})_{1\leqslant i\leqslant n,1\leqslant j\leqslant n} $ is skew symmetric. Or, we can  instead of the matrix $ B $ use a finite quiver $ Q $ with vertices $ 1,2,\ldots,m $, and without oriented cycles of length 1 or 2. For each $ i=1,\ldots,n $, the mutation $ \mu_{i}(X,Q)=(X',Q') $ is defined by first replacing $ x_{i} $ with another element $ x^{*}_{i} $ in $ \cf $ according to a specific rule which depends upon both $ (x_{1},\ldots,x_{m}) $ and $ Q $. Then we get a new free generating set $ X'=(x_{1},\ldots,x_{i-1},x^{*}_{i},x_{i+1},\ldots,x_{n},x_{n+1},\ldots,x_{m}) $. The mutated quiver $ \mu_{i}(Q)=Q' $ is obtained from $ Q $ by applying a certain combinatorial rule depending on $ i $ to the arrows of $ Q $. This yields the new seed $ (X',Q') $. We continue applying $ \mu_{1},\ldots,\mu_{n} $ to the new seed to get further seeds. The $ m $-element sets $ X'' $ occurring in seeds $ (X'',Q'') $ are called \emph{clusters}, and the elements in the
	clusters are called \emph{cluster variables}. The cluster variables $ x_{n+1},\ldots,x_{m} $ cannot be mutated, these are called \emph{frozen variables}. The associated cluster algebra is the subalgebra of the function field $ \cf $ generated by all cluster variables. The basic combinatorial construction in the definition of a cluster algebra is quiver mutation. It is an elementary operation on quivers. In mathematics, it first appeared in Fomin-Zelevinsky's definition of cluster algebras. In physics, it appeared a few years before in Seiberg duality~\cite{seibergElectricmagneticDualitySupersymmetric1995}. Beyond the first simple examples, iterated quiver mutation becomes extremely complex, which has been one of the main motivations for interpreting it at the categorical level, where more conceptual tools become available.
	
	Let $ Q $ be a finite quiver and $ i $ a source of $ Q $, i.e.$ \, $a vertex without incoming arrows. Let $ Q' $ be the 
	mutation of $ Q $ at $ i $, i.e.$ \, $the quiver obtained from $ Q $ by reversing all the arrows going out from $ i $. Let $ k $ be a field, $ kQ $ the path algebra of $ Q $ and $ \cd(kQ) $ the derived category of the category of all right $ kQ $-modules. For a vertex $ j $ of $ Q' $ respectively $ Q $, let $ P'_{j} $ respectively $ P_{j}$ be the projective indecomposable associated with the vertex $ j $.
	Then Bernstein–Gelfand–Ponomarev's~\cite{bernsteinCoxeterFunctorsGabriel1973} main result reformulated in terms of derived categories following Happel~\cite{happelDerivedCategoryFinitedimensional1987} says that there is a canonical triangle equivalence 
	$$ F:\cd(kQ')\ra\cd(kQ) $$ which takes $ P'_{j} $ to $ P_{j} $ for $ j\neq i $ and $ P'_{i} $ to the cone over the morphism $$ P_{i}\ra\bigoplus P_{j} $$ whose components are the left multiplications by all arrows going out from $ i $. This gives a categorical interpretation of the mutation at a \emph{source} $ i $.
	
	Keller and Yang~\cite{kellerDerivedEquivalencesMutations2011} obtained an analogous result for the mutation of a quiver with potential $ (Q,W) $ at an \emph{arbitrary} vertex $ i $, where the role of the quiver with reversed arrows is played by the quiver with potential $ (Q', W') $ obtained from $ (Q, W) $ by mutation at $ i $ in the sense of Derksen--Weyman--Zelevinsky~\cite{derksenQuiversPotentialsTheir2008}. The role of the derived category $ \cd(kQ) $ is now played by the derived category $ \cd(\bm\Gamma) $ of the complete differential graded algebra $ \bm\Gamma=\bm\Gamma(Q,W) $ associated with $ (Q, W) $ by Ginzburg~\cite{ginzburgCalabiyauAlgebras2006a}. Their result is analogous to but not a generalization of Bernstein-Gelfand-Ponomarev’s since even if the potential $ W $ vanishes, the derived category $ \cd(\bm\Gamma) $ is not equivalent to $ \cd(kQ) $. 
	
	Recently, Pressland~\cite{presslandMutationFrozenJacobian2020} has generalized mutation of quivers with potential to that of \emph{ice} quivers with potential. In this article, our aim is to ‘categorify’ his ice quiver mutation by interpreting it in terms of equivalences between derived categories. We show that his rule yields derived equivalences between the associated complete \emph{relative} Ginzburg algebras. The main new ingredient in our approach is the complete relative Ginzburg algebra $ \bm\Gamma_{rel}(Q,F,W) $ associated to an ice quiver with potential $ (Q,F,W) $, where $ Q $ is a finite quiver, $ F $ is a subquiver of $ Q $ and $ W $ is a potential on $ Q $. This differential graded (=dg) algebra is connective, i.e.$ \, $concentrated in non positive degrees. It can be viewed as a special case of Yeung's deformed relative Calabi--Yau completion~\cite{yeungRelativeCalabiYauCompletions2016} arising in the theory of relative Calabi--Yau structures due to To\"{e}n~\cite{toenDerivedAlgebraicGeometry2014a} and Brav--Dyckerhoff~\cite{bravRelativeCalabiYau2019}. In particular, when the frozen subquiver $ F $ is empty, we recover the main result of Keller and Yang.

	Unexpectedly, Fraser and Sherman-Bennett have very recently discovered a construction of mutation at \emph{frozen} vertices in their study~\cite{fraserPositroidClusterStructures2020} of cluster structures on positroid varieties. Let $ v $ be a frozen vertex. Suppose that $ v $ is a source vertex in $ F $ such that there are no unfrozen arrows with source $ v $, or $ v $ is a sink vertex in $ F $ such that there are no unfrozen arrows with target $ v $. Our second main result says that the mutation at the frozen vertex $ v $ is ‘categorified' by the twist (respectively inverse twist) functor $ t_{S_{v}} $ (respectively $ t^{-1}_{S_{v}} $ ) with respect to the $ 2 $-spherical object $ S_{v} $ (the simple module at vertex $ v $) in the derived category of the complete derived preprojective algebra $ \bm\Pi_{2}(F) $. In \cite{wuyilinMutationsFrozenVertices}, we will show how suitable compositions of mutations at frozen vertices can be decategorified into quasi cluster isomorphisms (see~\cite{fraserQuasihomomorphismsClusterAlgebras2016}).
	
	\subsection*{Plan of the paper and main results}
	
	In section~\ref{Section2}, we recall some basic definitions and properties about (pseudocompact) dg algebras and their (pseudocompact) derived categories. We also discuss the homological invariants in the pseudocompact setting. In Section~\ref{Section3}, we give the Definition~\ref{Def:deformed CY completion} of the deformed relative Calabi--Yau completion for a pseudocompact dg algebra. Then we describe how to extend some of the results of \cite{yeungRelativeCalabiYauCompletions2016} and \cite{bozecRelativeCriticalLoci2020} to the pseudocompact setting (see Theorem~\ref{Thm:Relative defomed CY completion has a canonical left CY}). Let $ (Q,F) $ be an ice quiver. Let $ v $ be an unfrozen vertex such that no loops or 2-cycles are incident with $ v $. In section~\ref{Section4}, we recall the definition of Pressland's combinatorial mutation~\cite{presslandMutationFrozenJacobian2020} $ \mu_{v}^{P}(Q,W) $ of $ (Q,F) $ at vertex $ v $. Let $ W $ be a potential on $ Q $. Then we recall Pressland's mutation $ \mu_{v}(Q,F,W) $ of $ (Q,F,W) $ at $ v $. By a result of Pressland (see~\cite[Proposition 4.6]{presslandMutationFrozenJacobian2020}), if the underlying ice quiver of $ \mu_{v}(Q,F,W) $ has no $ 2 $-cycles containing unfrozen arrows, then it agrees with $ \mu^{P}_{v}(Q,F) $.

	For any ice quiver with potential $ (Q,F,W) $, we define the associated \emph{complete relative Ginzburg algebra} $ \bm{\Gamma}_{rel}(Q,F,W) $ (see Definition~\ref{Def:Relative Ginzburg algebra}). Via the relative deformed 3-Calabi--Yau completion of $ G:\widehat{kF}\hookrightarrow \widehat{kQ} $ with respect the potential $ W $, we get the \emph{Ginzburg functor} 
	$$ \bm{G}_{rel}:\bm{\Pi}_{2}(F)\ra\bm{\Gamma}_{rel}(Q,F,W) .$$
	By Theorem~\ref{Thm:Relative defomed CY completion has a canonical left CY}, the Ginzburg functor $ \bm{G}_{rel} $ has a canonical left 3-Calabi--Yau structure in the sense of Brav-Dyckerhoff~\cite{bravRelativeCalabiYau2019}. In Section~\ref{Section5}, we show that relative Calabi--Yau completion is compatible with localization functors and Morita functors (see~Theorem~\ref{Thm:Derived equivalence}). 
	
	Let $ \tilde{\mu}_{v}(Q,F,W)=(Q',F,W') $ be the \emph{pre-mutation} (see Definition~\ref{Def:Algebraic mutation}) of $ (Q,F,W) $ at vertex $ v $. Applying Theorem~\ref{Thm:Derived equivalence} in a special case (see Subsection~\ref{Subsection:Derived equivalences}), we get our main results.
	
	\begin{Thm}(Theorem~\ref{Thm:equivalence non-frozen})
		Let $ \bm\Gamma_{rel}=\bm\Gamma_{rel}(Q,F,W) $ and $ \bm\Gamma'_{rel}=\bm\Gamma_{rel}(Q',F,W') $ be the complete Ginzburg dg algebras associated to $ (Q,F,W) $ and $ \tilde{\mu}_{v}(Q,F,W)=(Q',F,W') $ respectively. For a vertex $ i $, let $ \bm\Gamma_{i}=e_{i}\bm\Gamma_{rel} $ and $ \bm\Gamma'_{i}=e_{i}\bm\Gamma'_{rel} $.
		\begin{itemize}
			\item[a)] There is a triangle equivalence
			\begin{align*}
				\xymatrix{
					\Phi_{+}:\mathcal{D}(\bm\Gamma'_{rel})\ar[r]&\mathcal{D}(\bm\Gamma_{rel}),
				}
			\end{align*}
			which sends $ \bm\Gamma'_{j} $ to $ \bm\Gamma_{j} $ for $ j\neq v $ and to the cone over the morphism
			$$ \bm\Gamma_{v}\ra\bigoplus_{\alpha}\bm\Gamma_{t(\alpha)} $$ for $ j=v $, where we have a summand $ \bm\Gamma_{t(\alpha)} $ for each arrow $ \alpha $ of $ Q $ with source $ v $ and the corresponding component of the map is the left multiplication by $ \alpha $. The functor $	\Phi_{+} $ restricts to triangle equivalences from $ \per(\bm\Gamma'_{rel})$ to $ \per(\bm\Gamma_{rel}) $ and from $ \pvd(\bm\Gamma'_{rel}) $ to $ \pvd(\bm\Gamma_{rel}) $.
			\item[b)] Let $ \bm\Gamma^{red}_{rel} $ respectively $ \bm\Gamma'^{red}_{rel} $ be the complete Ginzburg dg algebra associated with the reduction of $ (Q,F,W) $ respectively the reduction $ \mu_{v}(Q,F,W)=(Q'',F'',W'') $ of $ \tilde{\mu}_{v}(Q,F,W) $. Then functor $ \Phi_{+} $ yields a triangle equivalence
			\begin{align*}
				\xymatrix{
					\Phi_{+}^{red}:\mathcal{D}(\bm\Gamma'^{red}_{rel})\ar[r]&\mathcal{D}(\bm\Gamma^{red}_{rel}),
				}
			\end{align*}
			which restricts to triangle equivalences from $ \per(\bm\Gamma'^{red}_{rel}) $ to $ \per(\bm\Gamma^{red}_{rel}) $ and from $ \pvd(\bm\Gamma'^{red}_{rel}) $ to $ \pvd(\bm\Gamma^{red}_{rel}) $.
			\item[c)] We have the following commutative diagram
			\[
			\begin{tikzcd}
				&&\cd(\bm\Gamma'_{rel})\arrow[dd,"\Phi_{+}"]\\
				\cd(\bm\Pi_{2}(F))\arrow[urr,"(\bm{G}'_{rel})^{*}"]\arrow[drr,swap,"(\bm{G}_{rel})^{*}"]&&\\
				&&\cd(\bm\Gamma_{rel}).
			\end{tikzcd}
			\]
			\item[d)] Since the frozen parts of $ \mu_{v}(Q,F,W)=(Q'',F'',W'') $ and of $ \tilde{\mu}_{v}(Q,F,W)=(Q',F,W') $ 
			only differ in the directions of the frozen arrows, we have a canonical isomorphism between $ \Pi_{2}(kF) $ and $ \Pi_{2}(kF'') $. It induces a canonical triangle equivalence
			$$ \mathrm{can}:\cd(\bm\Pi_{2}(kF))\ra\cd(\bm\Pi_{2}(kF'')) .$$ Then the following diagram commutes up to isomorphism
			\[
			\begin{tikzcd}
				\cd(\bm\Pi_{2}(F''))\arrow[rr,"(\bm{G}'_{rel})^{*}"]\arrow[dd,"\mathrm{can}^{-1}",swap]&&\cd(\bm\Gamma'^{red}_{rel})\arrow[dd,"\Phi_{+}^{red}"]\\
				\\
				\cd(\bm\Pi_{2}(F))\arrow[rr,swap,"(\bm{G}_{rel})^{*}"]&&\cd(\bm\Gamma^{red}_{rel}).
			\end{tikzcd}
			\]
		\end{itemize}	
	\end{Thm}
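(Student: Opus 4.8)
The strategy is to derive parts (a)--(d) from the single compatibility result Theorem~\ref{Thm:Derived equivalence}, on the behaviour of relative Calabi--Yau completions under Morita and localization functors, applied to the configuration set up in Subsection~\ref{Subsection:Derived equivalences}; there the pre-mutation $\tilde\mu_v$ is realized through Morita and localization morphisms between suitable ice quivers with potential, so that $\bm\Gamma'_{rel}=\bm\Gamma_{rel}(Q',F,W')$ is identified, over $\bm\Gamma_{rel}$, with $\RHom_{\bm\Gamma_{rel}}(T,T)$ for an explicit object $T\in\cd(\bm\Gamma_{rel})$, and Theorem~\ref{Thm:Derived equivalence} then yields a triangle equivalence $\Phi_+\colon\cd(\bm\Gamma'_{rel})\to\cd(\bm\Gamma_{rel})$ that restricts to equivalences on $\per$ and on $\pvd$. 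Here $T=\bigoplus_jT_j$ with $T_j=\bm\Gamma_j$ for $j\neq v$ and $T_v=\cone\bigl(\bm\Gamma_v\to\bigoplus_\alpha\bm\Gamma_{t(\alpha)}\bigr)$, the components of the defining map being the left multiplications by the arrows $\alpha$ out of $v$; since $\Phi_+$ is the derived tensor product with $T$ (viewed as a $\bm\Gamma'_{rel}$-$\bm\Gamma_{rel}$-bimodule), it sends $\bm\Gamma'_j=e_j\bm\Gamma'_{rel}$ to $e_jT=T_j$, which is exactly the description asserted in (a). The left $\bm\Gamma'_{rel}$-module structure on $T$ is prescribed by explicit formulas for the action of the arrows, the reversed arrows (in degree $-1$) and the loops (in degree $-2$) of the pre-mutated ice quiver with potential; checking that these formulas are compatible with the differential of $\bm\Gamma'_{rel}$ is the heart of the matter and is where Pressland's rule for the mutated potential $W'$ is used.

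For (b), one uses that Pressland's reduction replaces an ice quiver with potential by a right-equivalent one and that right-equivalent data have canonically quasi-isomorphic relative Ginzburg algebras — the unreduced algebra deformation-retracts onto the reduced one, the discarded summand being contractible. Since $\mu_v(Q,F,W)=(Q'',F'',W'')$ is by definition the reduction of $\tilde\mu_v(Q,F,W)$, we have $\bm\Gamma'^{red}_{rel}=\bm\Gamma_{rel}(\mu_v(Q,F,W))$ up to the canonical identification of a reduction. Conjugating $\Phi_+$ by the quasi-isomorphisms $\bm\Gamma_{rel}\to\bm\Gamma^{red}_{rel}$ and $\bm\Gamma'_{rel}\to\bm\Gamma'^{red}_{rel}$ — well defined up to isomorphism by the uniqueness of reductions — yields $\Phi_+^{red}$, which restricts to $\per$ and to $\pvd$ because each of the three functors in the composition does.

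Parts (c) and (d) are verified on the generator. The Ginzburg functor $(\bm{G}_{rel})^*$ is restriction of scalars along $\bm{G}_{rel}\colon\bm\Pi_2(F)\to\bm\Gamma_{rel}$, so $(\bm{G}_{rel})^*(\bm\Pi_2(F))=\bigoplus_{i\in F_0}\bm\Gamma_i$, on which $\bm\Pi_2(F)$ acts through the frozen arrows and their degree $-1$ duals; likewise for $(\bm{G}'_{rel})^*$. Since no frozen vertex equals $v$, the equivalence $\Phi_+$ carries $\bm\Gamma'_i$ to $\bm\Gamma_i$ for every frozen $i$, and by the very construction of $T$ it intertwines the two $\bm\Pi_2(F)$-module structures; this gives a natural isomorphism $\Phi_+\circ(\bm{G}'_{rel})^*\cong(\bm{G}_{rel})^*$, i.e. the triangle of (c). For (d) one adds that $\mu_v$ alters the frozen subquiver only by reversing certain frozen arrows and that reversing an arrow $a$ of $F$ gives an isomorphism $\bm\Pi_2(kF)\iso\bm\Pi_2(kF'')$ interchanging $a$ with its degree $-1$ dual generator; this isomorphism induces $\mathrm{can}$, and after passing to reductions the square commutes by the same generator-wise comparison, together with the functoriality of the relative Calabi--Yau completion from Theorem~\ref{Thm:Derived equivalence}.

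The step I expect to be the real obstacle is the one in the first paragraph: producing the $\bm\Gamma'_{rel}$-$\bm\Gamma_{rel}$-bimodule $T$ by explicit formulas and checking their compatibility with the \emph{entire} differential of $\bm\Gamma'_{rel}$ — not just with the induced maps on cohomology — including the relative term of the Ginzburg differential that records the frozen arrows and the terms coming from the pre-mutated potential $W'$, all in the pseudocompact (topologized) setting where one must keep control of the completions and of the continuity of $-\lten_{\bm\Gamma'_{rel}}T$. Showing that $T$ is genuinely a silting object generating $\per(\bm\Gamma_{rel})$ in that setting — using the connectivity of $\bm\Gamma_{rel}$ together with the triangle defining $T_v$ — is a further, but essentially routine, point; and once this local analysis is done, parts (b), (c) and (d) follow formally from the functoriality of relative Calabi--Yau completions and from standard restriction-of-scalars and silting arguments.
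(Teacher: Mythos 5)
Your outline is essentially right and you name the correct ingredients (the tilting object $T$, Theorem~\ref{Thm:Derived equivalence}, the reduction quasi-isomorphism for part (b)), but the mechanism you describe for part (a) differs from the paper's, and the step you flag as ``the real obstacle'' is precisely the step the paper's argument is built to avoid. In the paper the tilting happens one level up: $T=\bigoplus_{j\neq v}P_{j}\oplus T_{v}$ is a tilting object in $\cd(\widehat{kQ})$ (not in $\cd(\bm\Gamma_{rel})$), with $T_{v}$ the cokernel of $P_{v}\to\bigoplus_{\alpha:s(\alpha)=v}P_{t(\alpha)}$, and one sets $A'=\End_{\widehat{kQ}}(T)$, an ordinary pseudocompact algebra Morita equivalent to $\widehat{kQ}$. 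Feeding the commutative triangle $\cb\hookrightarrow\ca'\xrightarrow{J}\ca$ into the deformed relative $3$-Calabi--Yau completion, Theorem~\ref{Thm:Derived equivalence} produces for free a Morita functor $\bm\Pi_{3}(\ca',\cb,\xi')\to\bm\Gamma_{rel}(Q,F,W)$; the remaining substantive work is Proposition~\ref{Prop: CY completion is Ginzburg}, which identifies $\bm\Pi_{3}(\ca',\cb,\xi')$ with $\bm\Gamma_{rel}(Q',F,W_{1}+W_{2})=\bm\Gamma'_{rel}$ by replacing $A'$ with an explicit quasi-isomorphic dg path algebra $C$ (a degree $-1$ arrow $\epsilon_{\alpha}$ killing each minimal relation $\sum_{\beta}\beta^{*}[\beta\alpha]$) and computing the completion of $\cb\to C$ directly. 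At no point does the paper write down a $\bm\Gamma'_{rel}$-$\bm\Gamma_{rel}$-bimodule structure on a complex of $\bm\Gamma_{rel}$-modules and verify it against the full Ginzburg differential; that is the original Keller--Yang route, which would presumably also succeed here if extended to track the frozen arrows and the relative term of the differential, but it is exactly the lengthy verification that passing through $\widehat{kQ}$ and the functoriality of the completion lets you skip. Your treatment of (b) matches the paper. For (c) and (d), be aware that checking commutativity ``on the generator'' does not by itself establish commutativity of a square of triangle functors; in the paper this is automatic because the squares arise by applying derived induction to an honest commutative diagram of pseudocompact dg algebras, namely diagram~(\ref{diag: mutation to morita}) together with its reduced variant.
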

	
	\bigskip
	We then define the associated \emph{boundary dg algebra} $ \Bd(Q,F,W) $ (see Definition~\ref{Def:boundary dg algebra}) as 
	$$ \Bd(Q,F,W)=\mathrm{REnd}_{\bm\Gamma_{rel}(Q,F,W)}((\bm{G}_{rel})^{*}(\bm{\Pi}_{2}(F)))\simeq e_{F}\bm\Gamma_{rel}(Q,F,W)e_{F} ,$$ where $ e_{F}=\sum_{i\in F}e_{i} $ is the sum of idempotents corresponding to the frozen vertices. Corollary~\ref{Cor:The boundary dg algebra is invariant under mutations} shows that the boundary dg algebra is invariant under mutations at unfrozen vertices. By using this Corollary, we illustrate our results on examples arising in the work of Baur--King--Marsh on dimer models and cluster categories of Grassmannians (see Example~\ref{Ex:Postnikov diagram}). 
	
	In the last section, we study the categorification of mutation at a frozen vertex. Let $ (Q,F,W) $ be an ice quiver with potential. Let $ v $ be a frozen vertex. Suppose that $ v $ satisfies the following conditions
	\begin{itemize}
		\item[1)] $ v $ is a source vertex in $ F $ such that there are no unfrozen arrows with source $ v $, or
		\item[2)] $ v $ is a sink vertex in $ F $ such that there are no unfrozen arrows with target $ v $.
	\end{itemize}
	In this situation, we define (see Definition~\ref{Def:Combinatorial ice mutations}) the mutation of $ (Q,F) $ at the frozen vertex $ v $ by using the same mutation rule as that defined by Pressland for mutation at unfrozen vertices. Then we also give the definition of the mutation of an ice quiver with potential at the frozen vertex $ v $ (see Definition~\ref{Def:algebraic ice mutation}).
	
	If $ v $ is a source vertex in $ F $, our result shows that the mutation at $ v $ is ‘categorified' by the inverse twist functor  $ t^{-1}_{S_{v}} $ with respect to the $ 2 $-spherical object $ S_{v} $ (the simple module at vertex $ v $) in $ \cd(\bm\Pi_{2}(F)) $. Let us make this more precise. Write $ (Q',F',W')=\tilde{\mu}_{v}(Q,F,W) $. Let $ \bm\Gamma_{rel}=\bm\Gamma_{rel}(Q,F,W) $ and $ \bm\Gamma'_{rel}=\bm\Gamma_{rel}(Q',F',W') $ be the complete relative Ginzburg dg algebras associated to $ (Q,F,W) $ and $ (Q',F',W') $ respectively.
	\begin{Thm}(Theorem~\ref{Thm:categorification of ice mutaion1})
		We have a triangle equivalence
		$$ \Psi_{+}:\cd(\bm\Gamma'_{rel})\ra\cd(\bm\Gamma_{rel}) ,$$
		which sends the $ \bm\Gamma'_{i} $ to $ \bm\Gamma_{i} $ for $ i\neq v $ and $ \bm\Gamma'_{v} $ to the cone
		$$ \cone(\bm\Gamma_{v}\ra\bigoplus_{\alpha}\bm\Gamma_{t(\alpha)}) ,$$ where we have a summand $ \bm\Gamma_{t(\alpha)} $ for each arrow $ \alpha $ of $ F $ with source $ v $ and the corresponding component of the map is the left multiplication by $ \alpha $. The functor $ \Psi_{+} $ restricts to triangle equivalences from $ \per(\bm\Gamma'_{rel}) $ to $ \per(\bm\Gamma_{rel}) $ and from $ \pvd(\bm\Gamma'_{rel}) $ to $ \pvd(\bm\Gamma_{rel}) $. Moreover, the following square commutes up to isomorphism
		\begin{equation}\label{Diagram:left ice mutation}
			\begin{tikzcd}
				\cd(\bm\Pi_{2}(F'))\arrow[r]\arrow[d,swap,"\mathrm{can}"]&\cd(\bm\Gamma'_{rel})\arrow[d,"\Psi_{+}"]\\
				\cd(\bm\Pi_{2}(F))\arrow[d,swap,"t^{-1}_{S_{v}}"]&\cd(\bm\Gamma_{rel})\arrow[d,equal]\\
				\cd(\bm\Pi_{2}(F))\arrow[r,swap]&\cd(\bm\Gamma_{rel}),
			\end{tikzcd}
		\end{equation}
		where $\mathrm{can}$ is the canonical functor induced by an identification between $ \bm\Pi_{2}(F') $ and $ \bm\Pi_{2}(F) $ and $ t^{-1}_{S_{v}} $ is the twist inverse functor with respect to the $ 2 $-spherical object $ S_{v} $, which gives rise to a triangle
		$$ t^{-1}_{S_{v}}(X)\ra X\ra\Hom_{k}(\RHom_{\bm\Pi_{2}(F)}(X,S_{v}),S_{v})\ra\Si t^{-1}_{S_{v}}(X) $$ for each object $ X $ of $ \cd(\bm\Pi_{2}(F)) $.
	\end{Thm}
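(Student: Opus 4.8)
The plan is to deduce this theorem from the general derived equivalence machinery already developed, namely Theorem~\ref{Thm:Derived equivalence} and its specialization in Subsection~\ref{Subsection:Derived equivalences}, exactly as in the proof of Theorem~\ref{Thm:equivalence non-frozen}, but tracking the effect on the frozen part more carefully because now the mutated vertex $v$ lies in $F$. First I would observe that the pre-mutation $\tilde{\mu}_v(Q,F,W)=(Q',F',W')$ at a frozen source $v$ satisfying condition~1) is, combinatorially, given by the very same local formula as Pressland's mutation at an unfrozen vertex: reverse all arrows incident with $v$, add composites $[\beta\alpha]$ for paths through $v$, and adjust $W$ by the standard term $\sum \beta^\ast\alpha^\ast[\beta\alpha]$. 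Since there are \emph{no} unfrozen arrows with source $v$, every arrow out of $v$ is frozen, so the local picture at $v$ takes place entirely inside $F$; this is what will let the frozen preprojective algebra $\bm\Pi_2(F)$ ``see'' the mutation. The relative Ginzburg algebra $\bm\Gamma_{rel}(Q',F',W')$ is then the relative deformed $3$-Calabi--Yau completion of $\widehat{kF'}\hookrightarrow\widehat{kQ'}$ along $W'$, and I would set up the same cofibrant resolution / one-sided twist of the generator $e_v$ used in the unfrozen case.

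The key steps, in order: (i) realize $\cd(\bm\Gamma'_{rel})$ and $\cd(\bm\Gamma_{rel})$ inside a common ambient category by the standard ``add a copy of the mutated projective'' trick — form the dg algebra $\widetilde{\bm\Gamma}$ containing both $\bm\Gamma_{rel}$ and $\bm\Gamma'_{rel}$ as Morita-equivalent subalgebras, with $e_v'\widetilde{\bm\Gamma}$ the cone $\cone(\bm\Gamma_v\to\bigoplus_\alpha\bm\Gamma_{t(\alpha)})$ over the frozen arrows out of $v$; this is where Theorem~\ref{Thm:Derived equivalence} (compatibility of relative CY completion with Morita functors) does the work and immediately yields the triangle equivalence $\Psi_+$ with the stated behavior on the $\bm\Gamma'_i$, together with its restriction to $\per$ and $\pvd$. (ii) For commutativity of the top square, note $\bm\Pi_2(F')$ and $\bm\Pi_2(F)$ differ only by reversing the frozen arrows at a source, hence are canonically isomorphic, giving $\mathrm{can}$; then chase the Ginzburg functors $(\bm G'_{rel})^\ast$ and $(\bm G_{rel})^\ast$ through the Morita picture of (i), using that $\bm G_{rel}$ is, by construction, induced from the inclusion of the frozen part. (iii) The genuinely new point is the bottom square: I must identify the composite $\cd(\bm\Pi_2(F))\xrightarrow{\mathrm{can}^{-1}\text{-adjusted}}\cd(\bm\Pi_2(F'))\xrightarrow{(\bm G'_{rel})^\ast}\cd(\bm\Gamma'_{rel})\xrightarrow{\Psi_+}\cd(\bm\Gamma_{rel})$ with $(\bm G_{rel})^\ast\circ t^{-1}_{S_v}$. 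The idea is that restricting the Morita equivalence to the frozen subalgebra $e_F\widetilde{\bm\Gamma}e_F$ reproduces, at the level of $\bm\Pi_2(F)$ alone, precisely the one-sided twist of $e_v$ along all arrows out of $v$, and this one-sided mutation of $\bm\Pi_2(F)$ is exactly the inverse spherical twist $t^{-1}_{S_v}$ at the $2$-spherical simple $S_v$ — here I would invoke the defining triangle $t^{-1}_{S_v}(X)\to X\to\Hom_k(\RHom_{\bm\Pi_2(F)}(X,S_v),S_v)\to\Si t^{-1}_{S_v}(X)$ and match it term by term with the triangle defining the cone on $e_v'$.

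The main obstacle I anticipate is step (iii): verifying that the \emph{global} derived equivalence $\Psi_+$, built from the mutation of the whole ice quiver, restricts along $(\bm G_{rel})^\ast$ to exactly the spherical twist inverse $t^{-1}_{S_v}$ and not merely to some autoequivalence agreeing with it on projectives. This requires checking that the image under $(\bm G_{rel})^\ast$ of the canonical triangle for the mutated generator $e_v'$ coincides, functorially in $X\in\cd(\bm\Pi_2(F))$, with the twist triangle; equivalently, that the bimodule $\bm\Pi_2(F)$-resolution of $e_v'$ is the cone of the evaluation/coevaluation map computing $t^{-1}_{S_v}$. Because $S_v$ is $2$-spherical in $\bm\Pi_2(F)$ and $v$ is a source with only frozen arrows leaving it, the relevant $\RHom(S_v,S_v)$ is a truncated polynomial algebra and the computation is forced; still, the bookkeeping of signs, degrees, and the interaction with completion (pseudocompactness) is where care is needed. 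Once this identification is in hand, the commutativity ``up to isomorphism'' of~\eqref{Diagram:left ice mutation} follows, and the $\per$/$\pvd$ restrictions are inherited from Theorem~\ref{Thm:Derived equivalence} as in part a) of Theorem~\ref{Thm:equivalence non-frozen}.
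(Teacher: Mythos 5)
Your proposal follows essentially the same route as the paper: a tilting object built from the frozen arrows out of $v$ gives Morita functors on both $\widehat{kF}\hookrightarrow\widehat{kQ}$ and its mutation, Theorem~\ref{Thm:Derived equivalence} propagates these through the relative Calabi--Yau completions to produce $\Psi_{+}$ and the commutative square, and the induced autoequivalence of $\cd(\bm\Pi_{2}(F))$ is identified with $t^{-1}_{S_{v}}$ at the bimodule level (the paper carries out your step~(iii) by exhibiting the bimodule as the shifted cone of an evaluation map and invoking the $2$-Calabi--Yau property together with a lemma of Grant--Marsh to get the identification functorially, exactly the point you flag as the main obstacle).
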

	
	Dually, if $ v $ is a sink vertex in $ F $, the mutation at $ v $ is ‘categorified' by the twist functor  $ t_{S_{v}} $ with respect to the $ 2 $-spherical object $ S_{v} $ in $ \cd(\bm\Pi_{2}(F)) $.
	

	\section*{Acknowledgments}
	
	The author is supported by the China Scholarship Council (CSC, grant number: 201906140160), Innovation Program for Quantum Science and Technology (2021ZD0302902), Natural Science Foundation
	of China (Grant Nos. 12071137) and STCSM (No. 18dz2271000). He would like to thank his PhD supervisor Bernhard Keller for his his guidance, patience and kindness. He would also like to thank his PhD co-supervisor Guodong Zhou for the constant support and encouragement during his career in mathematics. He is grateful to Xiaofa Chen for many interesting discussions and useful comments. He is very grateful to the anonymous referee for suggesting various improvements.

	\section{Preliminaries}\label{Section2}
	In this section, we recall some basic definitions and properties of (pseudocompact) dg algebras and their (pseudocompact) derived categories. Then we discuss homological invariants in the pseudocompact setting. Our main references are 
	\cite{kellerDerivingDGCategories1994a}, \cite{kellerDifferentialGradedCategories2006}, \cite[Appendix A]{kellerDerivedEquivalencesMutations2011} and \cite{vandenberghCalabiYauAlgebrasSuperpotentials2015}.
	\subsection{Derived categories of dg algebras}
	Let $ k $ be a commutative ring. 
	\begin{Def}\rm
		A \emph{differential graded $ k $-algebra} (or simply dg $ k $-algebra) is a graded $ k $-algebra $ A=\bigoplus_{n\in\mathbb{Z}}A^{n} $ equipped with a $ k $-linear homogeneous map $ d_{A}:A\ra A $ of degree 1 such that $ d_{A}^{2}=0 $ and the graded Leibniz rule $ d_{A}(ab)=(d_{A}a)b+(-1)^{n}ad_{A}(b) $ holds, where $ a\in A^{n} $ and $ b\in A $. The map $ d_{A} $ is called the \emph{differential} of $ A $.
	\end{Def}
	
	We can view an ordinary $ k $-algebra as a dg $ k $-algebra concentrated in degree $ 0 $ whose differential is trivial. A graded $ k $-algebra can be viewed as a dg $ k $-algebra with the zero differential.

	Let $ A $ be a dg $ k $-algebra with differential $ d_{A} $.
	\begin{Def}\rm
		A \emph{right dg module} over $ A $ is a graded right $ A $-module $ M=\bigoplus_{n\in\mathbb{Z}}M^{n} $ equipped with a $ k $-linear homogeneous map $ d_{M}:M\ra M $ of degree 1 such that $ d_{M}^{2}=0 $ and the graded Leibniz rule $$ d_{M}(ma)=d_{M}(m)a+(-1)^{n}md_{A}(a) $$ holds for all $ m\in M^{n} $ and $ a\in A $. The map $ d_{M} $ is called the \emph{differential} of $ M $.
	\end{Def}
	
	Given two dg $ A $-modules $ M $ and $ N $, we define the \emph{morphism complex} to be the graded $ k $-vector space $ \ch om_{A}(M,N) $ whose $ i $-th component $ \ch om^{i}_{A}(M,N) $ is the subspace of the product $ \prod_{j\in\mathbb{Z}}\Hom_{k}(M^{j},N^{j+i}) $ consisting of morphisms $ f $ such that $ f(ma)=f(m)a $ for all $ m $ in $ M $ and all $ a $ in $ A $, together with the differential $ d $ given by
	$$ d(f)=f\circ d_{M}-(-1)^{|f|}d_{N}\circ f $$ for a homogeneous morphism $ f $ of degree $ |f| $.
	
	The category $ \cc(A) $ of dg $ A $-modules is the category whose objects are the right dg $ A $-modules, and whose morphisms are the 0-cycles of the morphism complexes. It is an abelian category and a Frobenius category for the conflations which are split exact as sequences of graded $ A $-modules (see \cite{kellerDerivingDGCategories1994a}). Its stable category $ \ch(A) $ is called the \emph{homotopy category} of right dg $ A $-modules, which is equivalently defined as the category whose objects are the dg $ A $-modules and whose morphism spaces are the 0-th homology groups of the morphism complexes.
	
	The homotopy category $ \ch(A) $ is a triangulated category whose suspension functor $ \Si $ is the shift of dg modules $ M\mapsto \Si M $. The \emph{derived category} $ \cd(A) $ of dg $ A $-modules is the localization of $ \ch(A) $ at the full subcategory of acyclic dg $ A $-modules. 
	
	A dg $ A $-module $ P $ is \emph{cofibrant} if, for every surjective quasi-isomorphism $ L\ra M $, every morphism $ P\ra M $ factors through $ L $. For example, the dg algebra $ A $ itself is cofibrant. A dg $ A $-module $ I $ is \emph{fibrant} if, for every injective quasi-isomorphism $ L\ra M $, every morphism $ L\ra I $ extends to $ M $.
	
	\begin{Prop}\cite{kellerDerivingDGCategories1994a}
		\begin{itemize}
			\item[a)] For each dg $ A $-module $ M $, there is a quasi-isomorphism $ \mathbf{p}M\ra M $ with cofibrant $ \mathbf{p}M $ and a quasi-isomorphism $ M\ra \mathbf{i}M $ with fibrant $ \mathbf{i}M $.
			\item[b)] The projection functor $ \ch(A)\ra \cd(A) $ admits a fully faithful left adjoint given by $ M\mapsto\mathbf{p}M\ $ and a fully faithful right adjoint given by $ M\mapsto\mathbf{i}M $.
		\end{itemize}
	\end{Prop}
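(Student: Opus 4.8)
The plan is to follow Keller's original strategy: first build explicit, combinatorially defined resolutions — the \emph{semifree} or ``property (P)'' modules and their ``property (I)'' duals — and then verify that such modules satisfy the lifting, respectively extension, properties taken as the definitions of cofibrant and fibrant above; the adjunctions in b) then fall out formally. For a), I would single out the class of dg modules $P$ admitting an increasing, exhaustive filtration of dg submodules $0=F_{-1}\subseteq F_{0}\subseteq F_{1}\subseteq\cdots$, $P=\bigcup_{p}F_{p}$, such that each inclusion $F_{p-1}\hookrightarrow F_{p}$ splits in the category of graded $A$-modules and each subquotient $F_{p}/F_{p-1}$ is a direct sum of shifts $\Si^{n}A$ of the free module with its natural differential. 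Given $M$, I construct a quasi-isomorphism $\mathbf{p}M\ra M$ from such a $P$ step by step: begin with the free module $F_{0}=\bigoplus_{z}\Si^{|z|}A$ on a set of homogeneous cocycle generators $z$ of $M$ via $e_{z}\mapsto z$, which is surjective on homology; then inductively adjoin free generators $y$ whose differential $dy$ is a cocycle of the module built so far representing a homology class that becomes a coboundary in $M$, sending $y$ to a chosen bounding chain. The colimit is a property (P) module and the induced map $\mathbf{p}M\ra M$ is a quasi-isomorphism.

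The technical heart is the vanishing lemma: $\ch(A)(P,C)=0$ whenever $P$ has property (P) and $C$ is acyclic. For a free module this reads $\ch(A)(\Si^{n}A,C)=H^{-n}(C)=0$; the splitting of the filtration turns each $F_{p-1}\hookrightarrow F_{p}\ra F_{p}/F_{p-1}$ into a conflation of the Frobenius category $\cc(A)$, so the associated triangle in $\ch(A)$ and induction give $\ch(A)(F_{p},C)=0$; passing to the colimit through the telescope triangle $\bigoplus_{p}F_{p}\ra\bigoplus_{p}F_{p}\ra P\ra\Si(\cdots)$ together with $\ch(A)(\bigoplus_{p}F_{p},C)=\prod_{p}\ch(A)(F_{p},C)=0$ then yields $\ch(A)(P,C)=0$. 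From this, applying $\ch(A)(P,-)$ to the triangle on the kernel of a surjective quasi-isomorphism $q\colon L\ra M$ and lifting generators along the filtration shows that $P$ is cofibrant in the sense of the above definition. The fibrant resolution $M\ra\mathbf{i}M$ and the assertion that property (I) modules are fibrant are proved dually, replacing the free modules $\Si^{n}A$ by cofree modules $\Hom_{k}(\Si^{n}A,k)$, coproducts by products, and surjections by injections; alternatively, when $k$ is a field one deduces them from the cofibrant case over $A^{\mathrm{op}}$ by $k$-linear duality.

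For b), the key consequence of the vanishing lemma is that, for cofibrant $P$, the projection $\pi\colon\ch(A)\ra\cd(A)$ induces a bijection $\ch(A)(P,N)\iso\cd(A)(P,N)$ for every $N$: the functor $\ch(A)(P,-)$ inverts quasi-isomorphisms (apply the lemma to the cone of such a map via the long exact sequence), so it factors through the localization and the comparison map is an isomorphism. Hence, naturally in $X$ and $Y$,
\[
\ch(A)(\mathbf{p}X,Y)\cong\cd(A)(\mathbf{p}X,Y)\cong\cd(A)(X,\pi Y),
\]
the last step because $\mathbf{p}X\ra X$ is invertible in $\cd(A)$; this exhibits $X\mapsto\mathbf{p}X$ as left adjoint to $\pi$. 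The unit $X\ra\pi\mathbf{p}X$ is the class of $\mathbf{p}X\ra X$, an isomorphism in $\cd(A)$, so the left adjoint is fully faithful. The right adjoint $M\mapsto\mathbf{i}M$ and its full faithfulness follow by the dual argument using fibrant objects and maps \emph{into} $\mathbf{i}X$.

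I expect the main obstacle to be the vanishing lemma combined with the transfinite bookkeeping: in particular the passage to the union of the filtration, i.e.\ the $\lim^{1}$/telescope step, and ensuring that the step-by-step construction simultaneously remains surjective on homology and eventually becomes injective on homology. The second delicate point is that cofibrancy as defined requires producing an \emph{actual} chain-map lift against a surjective quasi-isomorphism, not merely the homotopy-invariance of $\ch(A)(P,-)$; this forces one to lift along the splittings $F_{p-1}\hookrightarrow F_{p}$ while correcting by elements of the acyclic kernel, which is exactly where the vanishing $\ch(A)(\Si^{n}A,\ker q)=0$ is used.
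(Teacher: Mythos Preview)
The paper does not give its own proof of this proposition; it is stated as a citation from \cite{kellerDerivingDGCategories1994a} and used as background. Your proposal is a correct and faithful outline of Keller's original argument via property~(P) resolutions, the vanishing lemma for acyclic targets, and the formal deduction of the adjunctions, so it matches the intended reference.
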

	We call $ \mathbf{p}M\ra M $ a \emph{cofibrant resolution} of $ M $ and $ M\ra \mathbf{i}M $ a \emph{fibrant resolution} of $ M $. We can compute morphisms in $ \cd(A) $ via
	$$ \cd(A)(L,M)\simeq\ch(A)(\mathbf{p}L,M)=H^{0}(\ch om_{A}(\mathbf{p}L,M)) .$$
	The \emph{perfect derived category} $ \per A $ is the smallest full subcategory of $ \cd(A) $ containing $ A $ which is stable under taking shifts, extensions and direct summands. An object $ M $ of $ \cd(A) $ belongs to $ \per(A) $ if and only if it is \emph{compact}, i.e. the functor $ \Hom_{\cd(A)}(M,?) $ commutes with arbitrary (set-indexed) direct sums (see~\cite[Section 5]{kellerDerivingDGCategories1994a}). The \emph{perfectly valued derived category} $ \pvd(A) $ is the full subcategory of $ \cd(A) $ consisting of those dg $ A $-modules whose underlying dg $ k $-module is perfect.
	Let $ f:B\ra A $ be a dg algebra morphism. Then $ f $ induces the restriction functor $ f_{*}: \cc(A)\ra\cc(B) $. It fits into the usual triple of adjoint functors $ (f^{*} , f_{*},f^{!}) $ between $ \cd(A) $ and $ \cd(B) $.

	\subsection{Derived categories of pseudocompact dg algebras}
	Let $ k $ be a field and $ R $ a finite dimensional separable $ k $-algebra (i.e. $ R $ is projective as a bimodule over itself). By definition, an $ R $-algebra is an algebra in the monoidal category of $ R $-bimodules.
	\begin{Def}\rm\cite[Chaper IV]{gabrielCategoriesAbeliennes1962} An $ R $-algebra $ A $ is \emph{pseudocompact} if it is endowed with a linear topology for which it is complete and separated and admits a basis of neighborhoods of zero formed by left ideals $ I $ such that $ A/I $ is finite dimensional over $ k $.	
	\end{Def}
	
	\begin{Rem}
		Recall~\cite[Lemma 4.1]{vandenberghCalabiYauAlgebrasSuperpotentials2015} that the notion of pseudocompact $ R $-algebra can be defined equivalently using right respectively two-side ideals.
	\end{Rem}

	Let $ A $ be a pseudocompact $ R $-algebra. A right $ A $-module $ M $ is \emph{pseudocompact} if it is endowed with a linear topology admitting a basis of neighborhoods of zero formed by submodules $ N $ of $ M $ such that $ M/N $ is finite-dimensional over $ k $. A \emph{morphism} between pseudocompact modules is a continuous $ A $-linear map. We denote by $ \mathrm{Pcm}(A) $ the category of pseudocompact right $ A $-modules. 
	
	The common annihilator of the simple pseudocompact $ A $-modules is called the \emph{radical} of $ A $ and is denoted by $ \mathrm{rad}A $.
	\begin{Prop}\cite[Proposition 4.3]{vandenberghCalabiYauAlgebrasSuperpotentials2015}
		The radical of $ A $ coincides with the ordinary Jacobson radical of $ A $.
	\end{Prop}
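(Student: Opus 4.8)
The plan is to prove the two inclusions $\mathrm{rad}A\subseteq J(A)$ and $J(A)\subseteq\mathrm{rad}A$, where $J(A)$ denotes the ordinary Jacobson radical, i.e.\ the intersection of the annihilators of all simple (discrete, not necessarily finite-dimensional) right $A$-modules.

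For $J(A)\subseteq\mathrm{rad}A$ the first step is to observe that every simple pseudocompact $A$-module $S$ is finite-dimensional over $k$: since $A$ is separated and $S\neq 0$, the module $S$ has an open submodule $N$ of finite codimension with $N\neq S$, and $N$ is automatically pseudocompact (it is open hence closed, so complete and separated, and the intersections of $N$ with a basis of $M$ give a basis of finite-codimensional open submodules of $N$); by simplicity $N=0$, so $S$ is finite-dimensional. Hence $S$ carries the discrete topology, every $k$-subspace of $S$ is closed, and every abstract submodule of $S$ is a pseudocompact submodule; therefore $S$ is also simple as an abstract $A$-module. Thus the family of annihilators cutting out $\mathrm{rad}A$ is a subfamily of the one cutting out $J(A)$, and $J(A)\subseteq\mathrm{rad}A$ follows.

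For the reverse inclusion I would use the Remark above: $A$ admits a basis of neighbourhoods of $0$ consisting of open two-sided ideals $I$, so $A=\varprojlim_{I}A/I$ with each $A/I$ finite-dimensional. First I would show that the image $\bar a$ of any $a\in\mathrm{rad}A$ in $A/I$ lies in $J(A/I)$: a simple $A/I$-module, inflated along $A\twoheadrightarrow A/I$, stays simple and is finite-dimensional, hence a simple pseudocompact $A$-module, hence killed by $\mathrm{rad}A$; since the action of $a$ factors through $\bar a$, intersecting over all simple $A/I$-modules gives $\bar a\in J(A/I)$. Now fix $a\in\mathrm{rad}A$ and $x\in A$. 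For each $I$ the element $1-\bar x\bar a\in A/I$ is a unit because $\bar a\in J(A/I)$, and its two-sided inverse $u_{I}$ is unique; hence the $u_{I}$ form a compatible system and define $u\in\varprojlim_{I}A/I=A$. Since $A$ is separated, the identities $(1-xa)u=u(1-xa)=1$, valid modulo every $I$, hold in $A$, so $1-xa$ is invertible for all $x\in A$, which by the standard characterization of the Jacobson radical means $a\in J(A)$. This yields $\mathrm{rad}A\subseteq J(A)$ and would complete the proof.

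The genuinely routine parts are the verifications that closed subspaces and open submodules of pseudocompact modules are again pseudocompact and the invertibility criterion for $J(A)$. The step needing the most care is the passage to the inverse limit: one must invoke the cofinality of open two-sided ideals (the Remark), the finite-dimensionality of the quotients $A/I$, and the uniqueness of inverses, in order to assemble the local inverses $u_{I}$ into an honest element of $A$; once the topology is bookkept correctly, both inclusions are essentially formal.
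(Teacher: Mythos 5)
Your proof is correct: both inclusions are established soundly, with the key points (every simple pseudocompact module is finite-dimensional and discrete, hence simple as an abstract module; and the lifting of the inverses $u_{I}$ through $A\cong\varprojlim_{I}A/I$ over open two-sided ideals, using separatedness to conclude) handled properly. The paper itself gives no proof, merely citing Van den Bergh's Proposition~4.3, and your argument is the standard one underlying that reference, so there is nothing to contrast.
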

	
	%

	\begin{Def}\rm\cite[Appendix A]{kellerDerivedEquivalencesMutations2011} Let $ k $ and $ R $ be as above. Let $ A $ be an algebra in the category of graded $ R $-bimodules. We say that $ A $ is \emph{pseudocompact} if it is endowed with a complete separated topology admitting a basis of neighborhoods of 0 formed by graded ideals $ I $ of finite total codimension in $ A $. We denote by $ \mathrm{PcGr(R)} $ the category of pseudocompact graded $ R $-algebras. 
		
	We say that a dg $ R $-algebra $ A $ is \emph{pseudocompact} if it is pseudocompact as a graded $ R $-algebra and the differential $ d_{A} $ is continuous. Denote by $ \mathrm{PcDg(R)} $ the category of pseudocompact dg $ R $-algebras. We say that a morphism $ f:A\ra A' $ in $ \mathrm{PcDg(R)} $ is \emph{quasi-isomorphism} if it induces an isomorphism $ H^{*}(A)\ra H^{*}(A') $. Finally, we say that a dg $ R $-algebra is \emph{connective} if $ H^{p}(A) $ vanishes for all $ p>0 $.
		
	\end{Def}

	\begin{Def}\rm\label{Def:augmented}
		An \emph{augmented $ R $-algebra} is an algebra $ A $ equipped with $ k $-algebra homomorphisms $$ R\xrightarrow{\eta} A\xrightarrow{\epsilon} R $$ such that $ \epsilon\circ\eta $ is the identity. We use the notation $ \mathrm{PcAlg}(R) $ for the category of augmented pseudocompact dg $ R $-algebras. We say that a pseudocompact augmented dg $ R $-algebra $ A $ is \emph{complete} if $ \overline{A}:=\ker\epsilon=\mathrm{rad}A $. We denote by $ \mathrm{PcAlgc}(R) $ the full subcategory of $ \mathrm{PcAlg}(R) $ consisting of complete algebras.
	\end{Def}
	
	Let $ S $ be another finite dimensional separable $ k $-algebra and $ B $ a pseudocompact dg $ S $-algebra. A \emph{morphism} $ f $ from $ B $  to $ A $ consists of a $ k $-algebra morphism (not necessarily unital) $ f_{0}:S\ra R $ and a dg $ k $-algebra morphism (not necessarily unital)  $ f_{1}:B\ra A $ such that the following square commutes in the category of $ k $-algebras
	\begin{align*}
		\xymatrix{
			S\ar[d]_{\eta_{B}}\ar[r]^{f_{0}}&R\ar[d]^{\eta_{A}}\\
			B\ar[r]^{f_{1}}&A.
		}
	\end{align*}
	Similarly, we define morphisms from an object in $ \mathrm{PcAlg}(S) $ (respectively $ \mathrm{PcAlgc}(S) $) to an object in $ \PcAlg $ (respectively $ \PcAlgc $).

	\begin{Ex}\cite[Appendix A]{kellerDerivedEquivalencesMutations2011}
		Let $ Q $ be a finite graded quiver. We take $ R $ to be the product of copies of $ k $ indexed by the vertices of $ Q $ and $ A $ to be the completed path algebra, i.e.$ \, $for each integer $ n $, the component $ A^{n} $ is the product of the spaces $ kp $, where $ p $ ranges over the paths in $ Q $ of total degree $ n $. We endow $ A $ with a continuous differential sending each arrow to a possibly infinite linear combination of paths of length $ \geqslant2 $. For each $ n $, we define $ I_{n} $ to be the ideal generated by the paths of length $ \geq n $ and we define the topology on $ A $ to have the $ I_{n} $ as a basis of neighborhoods of $ 0 $. Then $ A $ is an augmented pseudocompact complete dg $ R $-algebra.
	\end{Ex}
	
	Let $ A $ be a pseudocompact dg $ R $-algebra. A right dg $ A $-module $ M $ is \emph{pseudocompact} if it is endowed with a topology for which it is complete and separated (in the category of graded $ A $-modules) and which admits a basis of neighborhoods of $ 0 $ formed by dg submodules of finite total codimension. It is clear that $ A $ is a pseudocompact dg module over itself. 
	
	A \emph{morphism} between pseudocompact dg $ A $-modules is a continuous dg $ A $-module morphism. We denote by $ \cc_{pc}(A) $ the the category of pseudocompact dg right $ A $-modules. A morphism $ f:L\ra M $ between pseudocompact dg right $ A $-modules is a \emph{quasi-isomorphism} if it induces an isomorphism $ H^{*}(L)\ra H^{*}(M) $.

	\begin{Prop}\cite[Lemma A.12]{kellerDerivedEquivalencesMutations2011}
		\begin{itemize}
			\item[a)] The homology $ H^{*}(A) $ is pseudocompact graded $ R $-algebra. In particular, $ H^{0}(A) $ is a pseudocompact R-algebra.
			\item[b)] For each pseudocompact dg module $ M $, the homology $ H^{*}(M) $ is a pseudocompact graded module over $ H^{*}(A) $.
		\end{itemize}	
	\end{Prop}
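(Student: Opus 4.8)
The plan is to realise $H^{*}(A)$ as a Hausdorff quotient of a closed graded $R$-subalgebra of $A$ and to check that such quotients remain pseudocompact. Write $Z=\ker d_{A}$ for the graded $R$-subalgebra of cycles and $B=\Ima d_{A}\subseteq Z$ for the graded two-sided ideal of boundaries, so that $H^{*}(A)=Z/B$ as a graded $R$-algebra (note $R\subseteq Z$, since $\eta_{A}$ is a morphism of dg algebras and $R$ has zero differential). First I would observe that $Z$ is closed in $A$: the differential $d_{A}$ is continuous and $A$ is separated, so $\ker d_{A}$ is closed; it is graded because $d_{A}$ is homogeneous and a subalgebra because $d_{A}$ is a derivation. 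With the induced topology $Z$ is again pseudocompact: a closed submodule of a complete separated linearly topologised module is complete and separated, and if $I$ runs over a basis of graded ideals of finite total codimension in $A$, then the $I\cap Z$ are graded ideals of $Z$ with $Z/(I\cap Z)\hookrightarrow A/I$, hence of finite total codimension, and they form a basis of neighbourhoods of $0$ in $Z$.

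The decisive step will be to show that $B=\Ima d_{A}$ is closed in $Z$, equivalently in $A$. Here one uses that a pseudocompact module is \emph{linearly compact}, being an inverse limit of finite-dimensional discrete modules, together with the fact that the image of a linearly compact module under a continuous linear map is again linearly compact, hence closed in the separated module $A$ (the continuous bijection $A/\ker d_{A}\to\Ima d_{A}$ onto a Hausdorff subspace is a homeomorphism). Alternatively, and more slickly, one may invoke the exact $k$-duality between pseudocompact right $A$-modules and discrete left $A$-modules: it carries the complex $A$ to an ordinary complex of discrete modules, whose homology is discrete, and dualising back exhibits $H^{*}(A)$ as pseudocompact. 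Granting the closedness of $B$, the quotient $Z/B$ with the quotient topology is separated, and, being a Hausdorff quotient of the linearly compact module $Z$, it is again linearly compact, hence complete. A basis of neighbourhoods of $0$ is given by the images $(I\cap Z+B)/B$; each such image is the image of the graded ideal $I\cap Z+B$ of $Z$, hence a graded ideal of $H^{*}(A)$, of total codimension at most $\dim_{k}A/I<\infty$. Therefore $H^{*}(A)$ is a pseudocompact graded $R$-algebra and its degree-zero component $H^{0}(A)$ is a pseudocompact $R$-algebra, which proves a).

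For b), I would run the same argument with the pseudocompact dg module $M$ in place of $A$: the cycles $\ker d_{M}$ form a closed graded submodule of $M$ over $Z=\ker d_{A}$, pseudocompact for the induced topology; the boundaries $\Ima d_{M}$ are closed by linear compactness of $M$; and $H^{*}(M)=\ker d_{M}/\Ima d_{M}$, with the quotient topology, is a complete separated graded module admitting a basis of neighbourhoods of $0$ by submodules of finite total codimension, namely the images of $N\cap\ker d_{M}$ as $N$ runs over the open dg submodules of finite total codimension of $M$. Since the $A$-action on $M$ is continuous, the induced $H^{*}(A)$-action on $H^{*}(M)$ is continuous as well, so $H^{*}(M)$ is a pseudocompact graded $H^{*}(A)$-module.

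The step I expect to be the main obstacle is the closedness of the boundaries $\Ima d_{A}$ (and $\Ima d_{M}$): without it the naive quotient topology on the homology need not be separated. It is precisely here, and essentially nowhere else, that pseudocompactness --- i.e. linear compactness --- is used rather than mere completeness; the remaining verifications are routine bookkeeping of gradings, ideals and codimensions, along the lines of \cite[Appendix A]{kellerDerivedEquivalencesMutations2011} and \cite{vandenberghCalabiYauAlgebrasSuperpotentials2015}.
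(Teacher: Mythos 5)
The paper does not prove this statement at all: it is recalled verbatim from \cite[Lemma A.12]{kellerDerivedEquivalencesMutations2011} and used as a black box, so there is no in-paper argument to compare against. Your proof is correct and is essentially the standard one. The decisive point is exactly where you locate it: a pseudocompact module is linearly compact (an inverse limit of finite-dimensional discrete modules), the continuous image of a linearly compact module in a separated linearly topologised module is linearly compact and hence closed, so $\Ima d_{A}$ (resp.\ $\Ima d_{M}$) is closed and the quotient $\ker d/\Ima d$ is again separated, linearly compact (hence complete), with the images of the open graded ideals (resp.\ dg submodules) of finite total codimension as a basis of neighbourhoods of $0$. The bookkeeping you do --- $\ker d_{A}$ is a closed graded subalgebra since $d_{A}$ is a continuous homogeneous derivation, $\Ima d_{A}$ is a two-sided ideal of $\ker d_{A}$, and $(I\cap Z+B)/B$ is an ideal of finite codimension --- is all sound. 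Your ``slicker'' alternative via the exact duality between pseudocompact and discrete modules is in fact the route closer in spirit to the source (Keller--Yang and Van den Bergh systematically exploit that the category of pseudocompact modules is dual to a category of discrete modules, so that homology is computed in an abelian category with exact inverse limits and automatically stays pseudocompact); the direct topological argument you give as your main line is more self-contained but requires quoting the linear-compactness facts from Gabriel. Either version is acceptable.
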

	
	The category $ \cc_{pc}(A) $ has a natural dg enrichment $ \cc_{pc}^{dg}(A) $. It has the same objects as $ \cc_{pc}(A) $. For $ L,M\in\cc_{pc}^{dg}(A) $, the \emph{Hom-complex} $ \ch om_{\cc_{pc}^{dg}(A)}(L,M) $ is the complex of $ R $-modules with degree $ p $ component $ \ch om_{\cc_{pc}^{dg}(A)}^{p}(L,M) $ being the space of continuous homogeneous $ A $-linear maps of degree $ p $ form $ L $ to $ M $ (here we consider $ A $ as a pseudocompact graded algebra and $ L,M $ as pseudocompact graded $ A $-modules) and with its differential defined by 
	
	$$ d(f)=d_{M}\circ f-(-1)^{p}f\circ d_{L} $$ for $ f\in\Hom_{\cc_{pc}^{dg}(A)}^{p}(L,M) $.
	
	For a dg category $ \ca $, 
	the category $ H^{0}(\ca) $ has the same objects as $ \ca $ and its morphisms are defined by $$(H^{0}(\ca))(X,Y)=H^{0}(\ca(X,Y)), $$ where $ H^{0} $ denotes the $ 0 $-th homology of the complex $ \ca(X,Y) $. We denote by $ \ch_{pc}(A) $ the zeroth homology category of $ \cc_{pc}^{dg}(A) $,$ \, $i.e.$ \, $$ \ch_{pc}(A)=H^{0}(\cc_{pc}^{dg}(A)) $.

	\begin{Thm}\cite[Section 8.2]{positselskiTwoKindsDerived2011}
		Let $ A $ be an object in $ \PcAlgc $. There exists a stable model structure on $ \cc_{pc}(A) $ which is given as follows:
		\begin{itemize}
			\item[(1)] The weak equivalences are the morphisms with an acyclic cone. Here, an object is acyclic if it is in the smallest subcategory of the homotopy category of A which contains the total complexes of short exact sequences and is closed under arbitrary products.
			\item[(2)] The cofibrations are the injective morphisms with cokernel which is projective when forgetting the differential.
			\item[(3)] The fibrations are the surjective morphisms.
		\end{itemize}
		Since this model structure is stable, the corresponding homotopy category becomes a triangulated category and we called it the \emph{pseudocompact derived category} of $ A $ and denote it by $ \cd_{pc}(A) $.
	\end{Thm}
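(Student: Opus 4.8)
The plan is to reconstruct Positselski's argument \cite{positselskiTwoKindsDerived2011}: to recognize $\cc_{pc}(A)$ as a suitable abelian category, to build the model structure by the cotorsion-pair method, and then to read off the three classes. The structural facts I would establish first concern the ambient category. Write $A^{\#}$ for the underlying pseudocompact graded $R$-algebra. By Gabriel duality \cite{gabrielCategoriesAbeliennes1962}, the category $\mathrm{Pcm}(A^{\#})$ of pseudocompact graded right $A^{\#}$-modules is the opposite of a Grothendieck category; hence it is abelian, arbitrary products in it are exact, and it has enough projective objects, the projectives being the retracts of products of shifts of the ``free'' summands $e_{i}A$ (with $e_{1},\dots,e_{n}$ the primitive idempotents of the finite-dimensional $R$). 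Adjoining a continuous square-zero degree-$1$ differential, $\cc_{pc}(A)$ is again abelian with exact products, and it has enough projectives: every pseudocompact dg $A$-module is a quotient of a product of contractible dg modules $\cone(\mathbf{1}_{\Si^{m}e_{i}A})$, each of which is projective in $\cc_{pc}(A)$ and has projective underlying graded module. I would also record here that, since $R$ is separable (hence semisimple), every short exact sequence of pseudocompact dg $A$-modules is $R$-split in each degree; therefore such a sequence induces a triangle in $\ch_{pc}(A)$, and in particular, for an epimorphism $f$, the cone $\cone(f)$ is isomorphic in $\ch_{pc}(A)$ to $\Si(\ker f)$.

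\textbf{The acyclics.} Let $\mathcal{W}\subseteq\ch_{pc}(A)$ be the smallest full subcategory containing the totalizations of short exact sequences of pseudocompact dg $A$-modules and closed under arbitrary products (Positselski's class of \emph{contraacyclic} objects; the product-closure, rather than coproduct-closure, is the form forced by the pseudocompact setting). Exactness of products together with the usual manipulations with total complexes of bicomplexes show that $\mathcal{W}$ is a thick triangulated subcategory of $\ch_{pc}(A)$: closure under $\Si^{\pm1}$ is immediate, closure under cones holds because a cone can be rewritten as the totalization of a short exact sequence built from the given map, and closure under direct summands follows since $\ch_{pc}(A)$ is idempotent complete (having countable products) and a triangulated subcategory closed under countable products is closed under summands. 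Declaring a morphism of $\cc_{pc}(A)$ to be a weak equivalence when its cone lies in $\mathcal{W}$, the two-out-of-three and saturation axioms become automatic, and by the previous paragraph an epimorphism is a weak equivalence precisely when its kernel lies in $\mathcal{W}$.

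\textbf{The factorizations via cotorsion pairs.} By the correspondence between abelian model structures and pairs of complete cotorsion pairs (with a thick class of trivial objects), it is enough to show that
$$\bigl(\{\,M\in\cc_{pc}(A) : M^{\#}\text{ is a projective graded }A^{\#}\text{-module}\,\},\ \mathcal{W}\bigr)$$
and $\bigl(\{\text{projective objects of }\cc_{pc}(A)\},\ \obj\,\cc_{pc}(A)\bigr)$ are complete cotorsion pairs in $\cc_{pc}(A)$. The second is complete because $\cc_{pc}(A)$ has enough projectives, and one identifies its left class with the intersection of the two left classes using Positselski's lemma that a contraacyclic dg module with projective underlying graded module is contractible. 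For the first pair, the $\Ext^{1}$-orthogonality between graded-projective dg modules and $\mathcal{W}$ reduces to a direct computation with one-step twisted extensions, whose key input is that such a module admits no nonzero morphism in $\ch_{pc}(A)$ into an object of $\mathcal{W}$, and one checks that $\mathcal{W}$ is exactly the right perpendicular class. The one genuinely delicate point is \emph{completeness} of this cotorsion pair, equivalently the existence of the two functorial factorizations: the usual small object argument is unavailable, since pseudocompact modules are not small for filtered colimits, so one proves the statement first for finite-dimensional dg algebras (where classical cotorsion-pair generation applies) and then transports it to the general case by writing $A$ as a filtered inverse limit of finite-dimensional dg algebras of the same type and passing to the limit using exactness of products. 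This is where the completeness hypothesis $A\in\PcAlgc$ is used, and I expect this inverse-limit step to be the main obstacle.

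\textbf{Stability.} Granting the two complete cotorsion pairs, the standard output is a model structure on $\cc_{pc}(A)$ whose cofibrations are the monomorphisms with projective-as-graded-module cokernel, whose fibrations are all epimorphisms, and whose weak equivalences are the morphisms with acyclic cone --- that is, clauses (1), (2), (3). Finally, this model structure is stable: $\cc_{pc}(A)$ is pointed, and its suspension functor is the shift $M\mapsto\Si M$ of pseudocompact dg modules, which is already an automorphism of $\cc_{pc}(A)$, hence an equivalence on the homotopy category. By the standard criterion, the homotopy category of a stable model category is triangulated, so $\cd_{pc}(A)$ carries a triangulated structure with suspension induced by $\Si$.
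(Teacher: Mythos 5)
The paper contains no proof of this statement --- it is quoted directly from Positselski --- so there is nothing internal to compare your argument against; I can only assess the reconstruction on its own terms. Your architecture (Gabriel duality to realize $\cc_{pc}(A)$ as an abelian category with exact products and enough projectives; Hovey's correspondence producing the model structure from the two cotorsion pairs $(\{\text{graded-projectives}\}\cap\cw,\ \text{all objects})$ and $(\{\text{graded-projectives}\},\ \cw)$; stability from the invertibility of $\Si$) is a legitimate modern route to such ``contraderived'' model structures, and the identification of the three classes at the end is correct. Two steps, however, do not hold up.

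First, the claim that every short exact sequence of pseudocompact dg $A$-modules, being degreewise $R$-split, ``induces a triangle in $\ch_{pc}(A)$'' is false: a triangle in the homotopy category requires a splitting as graded $A$-modules, not as graded $R$-modules, because a merely $R$-linear splitting does not diagonalize the $A$-action. If your claim were true, every totalization of a short exact sequence would be contractible, $\cw$ would collapse to the contractible objects, and $\cd_{pc}(A)$ would coincide with $\ch_{pc}(A)$. A concrete counterexample: for $A=k[\epsilon]/(\epsilon^{2})$ in degree $0$, the sequence $0\to k\to A\to k\to0$ is $k$-split in each degree, yet its totalization is the non-split-exact complex $k\to A\to k$, which is contraacyclic by definition but not contractible. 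The downstream conclusion you want --- an epimorphism is a weak equivalence if and only if its kernel lies in $\cw$ --- remains true, but it must be derived from the fact that the totalization of $0\to\ker f\to M\to N\to0$ lies in $\cw$ by definition, which yields $\cone(f)\cong\Si\ker f$ only in the Verdier quotient $\ch_{pc}(A)/\cw$; since $\cw$ is thick, that suffices.

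Second, and more seriously, the entire analytic content of the theorem --- completeness of the cotorsion pair $(\{M:M^{\#}\ \text{graded-projective}\},\cw)$, i.e.\ the existence of cofibrant replacements and of the two factorizations --- is precisely the step you leave open. You are right that the small object argument is unavailable, but the proposed remedy (prove the statement for finite-dimensional dg algebras and transport it along a filtered inverse limit presentation of $A$) is neither carried out nor obviously workable: complete cotorsion pairs and model structures do not pass along inverse limits of algebras in any formal way. The arguments that actually close this gap are either the explicit completed reduced bar resolution, whose convergence is exactly where the hypotheses $\overline{A}=\mathrm{rad}A$ and completeness of the topology enter, or dualization to the Grothendieck category of discrete modules over the dual coalgebra, where an injective-type model structure is produced by a cofibrant generation argument and then dualized back. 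As written, the proposal correctly locates the difficulty but does not resolve it, so the theorem is not yet proved.
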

	
	\begin{Rem}\cite[pp.10]{vandenberghCalabiYauAlgebrasSuperpotentials2015}
		Under suitable boundedness assumptions (algebras concentrated in degrees $ \leqslant0 $ and modules concentrated in degrees $ \leqslant N $), weak equivalence is the same as quasi-isomorphism.
	\end{Rem}
	Let $ A $ be an object in $ \PcAlgc $. We define the \emph{perfect derived category} $ \per_{pc}(A) $ to be the thick subcategory of $ \cd_{pc}(A) $ generated by the free $ A $-module of rank 1. The \emph{perfectly valued derived category} $ \pvd_{pc}(A) $ is defined to be the full subcategory of $ \cd_{pc}(A) $ whose objects are the pseudocompact dg modules $ M $ such that $ \Hom_{\cd_{pc}(A)}(P, M) $ is finite-dimensional for each perfect $ P $.
	
	
	Let $ A' $ be another pseudocompact dg $ R $-algebra. Their \emph{complete tensor product} is defined by
	$$ A\widehat{\otimes}_{k}A'=\varprojlim_{U,V}A/U\ten_{k}A'/V, $$ where $ U,V $ run through the system of open neighborhoods of zero in $ A $ and $ A' $ respectively. Then $ A\widehat{\otimes}_{k}A'$ is also pseudocompact. We define the \emph{enveloping algebra} $ A^{e} $ of $ A $ to be the complete tensor product $ A\widehat{\otimes}_{k}A^{op} $. The \emph{category of pseudocompact $ A$-$A' $-bimodules} is defined as $ \cc_{pc}(A\widehat{\otimes}_{k}A') $.
	
	The dg algebra $ A $ is a pseudocompact dg module over the enveloping algebra $ A\widehat{\otimes}_{k}A^{op} $. We say that $ A $ is \emph{(topologically homologically) smooth} if the module $ A $ considered as a pseudocompact dg module over $ A^{e}$ lies in $ \per_{pc}(A^{e}) $.
	
	\begin{Prop}\cite[Lemma A.13]{kellerDerivedEquivalencesMutations2011}
		If $ A $ is the completed path algebra of a finite graded quiver endowed with a continuous differential sending each arrow to a possibly infinite linear combination of paths of length $ \geq2 $, then $ A $ is smooth.
		
	\end{Prop}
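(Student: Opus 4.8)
The plan is to write down an explicit two-term bimodule resolution of the diagonal $A$-bimodule whose terms are, up to shift and direct summands, finite direct sums of copies of $A^{e}$, and then to conclude by the fact that $\per_{pc}(A^{e})$ is the thick subcategory of $\cd_{pc}(A^{e})$ generated by $A^{e}$. Set $R=\prod_{i\in Q_{0}}k$ and let $V=\bigoplus_{a\in Q_{1}}k\,a$ be the graded $R$-bimodule spanned by the arrows, each $a$ placed in its prescribed degree, so that the underlying pseudocompact graded $R$-algebra of $A$ is the completed tensor algebra $\widehat{T}_{R}(V)=\prod_{n\geqslant0}V^{\widehat{\otimes}_{R}n}$ and $d_{A}$ is determined by its continuous restriction $d_{A}|_{V}\colon V\to\prod_{n\geqslant2}V^{\widehat{\otimes}_{R}n}$.

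Consider the sequence of pseudocompact dg $A^{e}$-modules
\[
0\longrightarrow A\,\widehat{\otimes}_{R}V\,\widehat{\otimes}_{R}A\xrightarrow{\ \iota\ }A\,\widehat{\otimes}_{R}A\xrightarrow{\ \mu\ }A\longrightarrow0,
\]
where $\mu$ is the multiplication and $\iota(a\otimes v\otimes b)=av\otimes b-a\otimes vb$. Forgetting the differentials, this is the completion of the classical exact bimodule resolution of a tensor algebra; the path-length filtrations are compatible, so completion preserves exactness and the sequence is exact as a sequence of pseudocompact graded $R$-bimodules. One equips $A\,\widehat{\otimes}_{R}A$ with the differential $d_{A}\widehat{\otimes}\,1+1\,\widehat{\otimes}\,d_{A}$ (Koszul sign) and $A\,\widehat{\otimes}_{R}V\,\widehat{\otimes}_{R}A$ with the analogous differential on its two outer factors, corrected by the term obtained from $d_{A}|_{V}$ via the noncommutative derivative $V\to A\,\widehat{\otimes}_{R}V\,\widehat{\otimes}_{R}A$ which splits each path occurring in $d_{A}v$ at each of its arrows; with these choices $\iota$ and $\mu$ become morphisms in $\cc_{pc}(A^{e})$, so the displayed sequence is a short exact sequence of pseudocompact dg $A^{e}$-modules.

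Next I would check that the two left-hand terms lie in $\per_{pc}(A^{e})$. Since $R$ is separable over $k$, the multiplication $R\otimes_{k}R^{op}=R^{e}\to R$ splits as a map of $R$-bimodules, so $R$ is a direct summand of $R^{e}$ as an $R$-bimodule; applying $A\,\widehat{\otimes}_{R}(-)\,\widehat{\otimes}_{R}A$ shows that $A\,\widehat{\otimes}_{R}A$ is a direct summand of $A\,\widehat{\otimes}_{R}R^{e}\,\widehat{\otimes}_{R}A=A\,\widehat{\otimes}_{k}A=A^{e}$, hence $A\,\widehat{\otimes}_{R}A\in\add A^{e}$. Likewise $V$ is a finite-dimensional graded $R$-bimodule and $R^{e}$ is semisimple, so $V$ is a direct summand of a finite sum of shifted copies $\Si^{n}R^{e}$; concretely $A\,\widehat{\otimes}_{R}V\,\widehat{\otimes}_{R}A\cong\bigoplus_{a\in Q_{1}}\Si^{-|a|}\bigl(Ae_{s(a)}\,\widehat{\otimes}_{k}\,e_{t(a)}A\bigr)$, a finite direct sum of shifts of direct summands of $A^{e}$. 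Thus both terms belong to $\per_{pc}(A^{e})$.

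Finally, the short exact sequence above yields a triangle
\[
A\,\widehat{\otimes}_{R}V\,\widehat{\otimes}_{R}A\longrightarrow A\,\widehat{\otimes}_{R}A\longrightarrow A\longrightarrow\Si\bigl(A\,\widehat{\otimes}_{R}V\,\widehat{\otimes}_{R}A\bigr)
\]
in $\cd_{pc}(A^{e})$; since $\per_{pc}(A^{e})$ is a thick subcategory containing the two outer terms, it contains $A$, that is, $A$ is smooth. I expect the only genuine work to be in the second step: fixing the twisted differential on $A\,\widehat{\otimes}_{R}V\,\widehat{\otimes}_{R}A$ with the correct Koszul signs and the precise term coming from $d_{A}$ on the arrows, checking that $\iota$ is then a chain map, and confirming that passing to the completion preserves exactness of the underlying graded sequence; the perfectness of the two terms and the concluding triangle argument are purely formal.
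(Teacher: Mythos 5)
Your proposal is correct and is essentially the argument of the cited reference (Keller--Yang, Lemma A.13), which the paper invokes without reproducing: one resolves the diagonal bimodule by the completed two-term complex $A\,\widehat{\otimes}_{R}V\,\widehat{\otimes}_{R}A\to A\,\widehat{\otimes}_{R}A$, uses separability of $R$ to see both terms lie in $\add$ of shifts of $A^{e}$, and concludes by thickness of $\per_{pc}(A^{e})$. The points you flag as needing care (the twisted differential on the middle term via the noncommutative derivative of $d_{A}|_{V}$, and exactness being preserved under completion along the path-length filtration) are exactly where the work lies, and your treatment of them is sound.
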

	%
	%
	%
	
	
	\begin{Prop}\cite[Proposition A.14]{kellerDerivedEquivalencesMutations2011}\label{Prop:pseudocompact for smooth dg}
		Let $ A $ be a pseudocompact dg $ R $-algebra. Assume that $ A $ is smooth and connective.
		\begin{itemize}
			\item[a)] The canonical functor $ \ch_{pc}(A)\ra\cd_{pc}(A) $ has a left adjoint $ M\mapsto\mathbf{p}M $.
			\item[b)] The triangulated category $ \pvd_{pc}A $ is generated by the dg modules of finite dimension concentrated in degree 0.
			\item[c)] The full subcategory $ \pvd_{pc}A $ of $ \cd_{pc}(A) $ is contained in the perfect derived category $ \per_{pc}A $.
			\item[d)] The opposite category $ \cd_{pc}(A)^{op} $ is compactly generated by $ \pvd_{pc}A $.
			\item[e)] Let $ A\ra A' $ be a quasi-isomorphism of pseudocompact, connective, smooth dg algebras. Then the restriction functor $ \cd_{pc}(A')\ra\cd_{pc}(A) $ is an equivalence. In particular, if the homology of $ A $ is concentrated in degree 0, there is an equivalence $ \cd_{pc}(A)\ra\cd_{pc}(H^{0}(A)) $. Moreover, in this case $ \cd_{pc}(H^{0}A) $ is equivalent to the derived category of the abelian category $ \mathrm{Pcm}(H^{0}A) $.
			\item[f)] Assume that $ A $ is a complete dg path algebra. There is an equivalence between $ \cd_{pc}(A)^{op} $ and the localizing subcategory $ \cd_{0}(A) $ of the ordinary derived category $ \cd(A) $ generated by the finite-dimensional dg $ A $-modules.
		\end{itemize}
	\end{Prop}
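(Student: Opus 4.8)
The plan is to treat the six items in three groups, according to the structural input each one needs: (a) and (e) are essentially formal consequences of Positselski's stable model structure on $\cc_{pc}(A)$ recalled above; (b) and (c) are d\'evissage arguments using, respectively, connectivity and smoothness; and (d) and (f) are incarnations of $k$-linear duality relating the pseudocompact world to the ``discrete'' (ordinary) one. Throughout, the only non-formal inputs are that model structure, the smoothness hypothesis $A\in\per_{pc}(A^{e})$, and connectivity $H^{>0}(A)=0$.

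For (a), the model structure supplies, for each $M\in\cc_{pc}(A)$, a trivial fibration $\mathbf{p}M\to M$ with $\mathbf{p}M$ cofibrant (factor the map $0\to M$); the construction can be made functorial in the usual way, and the composite $\cc_{pc}(A)\xrightarrow{\mathbf{p}}\cc_{pc}(A)\to\ch_{pc}(A)$ then descends to a functor $\cd_{pc}(A)\to\ch_{pc}(A)$ left adjoint to the localization $\ch_{pc}(A)\to\cd_{pc}(A)$, exactly as in the non-pseudocompact case; here connectivity guarantees that the weak equivalence $\mathbf{p}M\to M$ is a genuine quasi-isomorphism, so that $\mathbf{p}M$ is a cofibrant resolution in the naive sense. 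For (e), restriction along a quasi-isomorphism $A\to A'$ of connective smooth pseudocompact dg algebras preserves weak equivalences and sends cofibrant objects to cofibrant objects, and on cofibrant objects the induced maps of $\ch om$-complexes are quasi-isomorphisms, so it induces a triangle equivalence $\cd_{pc}(A')\to\cd_{pc}(A)$; applying this to the truncation $A\to H^{*}(A)=H^{0}(A)$ when $H^{*}(A)$ is concentrated in degree $0$ yields $\cd_{pc}(A)\simeq\cd_{pc}(H^{0}A)$, and for an ordinary pseudocompact algebra $B$ the Positselski model structure computes the derived category of the abelian category $\mathrm{Pcm}(B)$, which gives the last clause.

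For (b), let $M\in\pvd_{pc}(A)$, so that $H^{*}(M)$ is finite dimensional over $k$; connectivity of $A$ equips $\cd_{pc}(A)$ with a $t$-structure whose truncation functors are the naive ones, and since only finitely many $H^{n}(M)$ are nonzero, iterating the truncation triangles exhibits $M$ as a finite iterated extension of its homology modules $H^{n}(M)$, each a finite dimensional dg module concentrated in a single degree and hence, up to shift, in the localizing (indeed thick) subcategory generated by the finite dimensional modules concentrated in degree $0$. For (c), smoothness means $A\in\per_{pc}(A^{e})$, so there is a perfect complex $P$ of pseudocompact $A$-$A$-bimodules with $P\simeq A$ in $\cd_{pc}(A^{e})$; then for any $M\in\pvd_{pc}(A)$ one has $M\simeq M\otimes^{\mathbf{L}}_{A}A\simeq M\otimes^{\mathbf{L}}_{A}P$, and the bimodule-perfection of $P$ builds $M$ from finitely many shifts, extensions and direct summands of objects of the form $M\otimes^{\mathbf{L}}_{k}A$; since $M$ is perfect over $k$, each such object lies in $\per_{pc}(A)$, hence so does $M$. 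In particular (c) makes the duality pairing used in (d) well defined.

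For (d), the $k$-duality $D=\ch om_{k}(-,k)$ interchanges pseudocompact and discrete dg modules and, combined with (c), shows that every object of $\pvd_{pc}(A)$ is compact in $\cd_{pc}(A)^{\mathrm{op}}$ (a product in $\cd_{pc}(A)$ being sent to a coproduct of discrete modules, against which a perfect, hence dualizable, object is compact); the existence of ``tops'' --- every nonzero pseudocompact module admits a nonzero finite dimensional quotient, since the defining topology has finite dimensional discrete quotients --- shows that these objects jointly generate, proving (d). For (f), when $A$ is a complete dg path algebra this duality becomes an honest anti-equivalence between pseudocompact dg $A$-modules and locally finite discrete dg $A$-modules, and one checks that it matches the relevant localizing subcategories, identifying $\cd_{pc}(A)^{\mathrm{op}}$ with $\cd_{0}(A)$. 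The genuine obstacle throughout is bookkeeping the pseudocompact topology: at each step one must verify that cofibrant replacements, bimodule resolutions and the duality functor are compatible with completion and that no convergence problem intervenes --- and this is exactly where connectivity (degree-wise boundedness of the resolutions involved) is indispensable, so (a) together with (d) and (f) are where the real work lies, the remaining items being transported from the classical picture.
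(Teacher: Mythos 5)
The paper itself offers no proof of this statement: it is quoted from \cite[Proposition A.14]{kellerDerivedEquivalencesMutations2011}, and the only original content nearby is the remark immediately following it, which records that part a) needs only the smoothness hypothesis and part b) only connectivity. Measured against the argument in that appendix, your sketch follows the standard route for b) (truncation d\'evissage using the canonical $t$-structure), c) (resolving $A$ by a perfect bimodule complex and tensoring), and the duality picture underlying d) and f), so those parts are essentially the expected proof in outline.

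Two of your steps contain genuine gaps. In a) you justify the left adjoint by asserting that connectivity of $A$ forces the weak equivalence $\mathbf{p}M\ra M$ to be a quasi-isomorphism; this is false for unbounded $M$ --- the identification of weak equivalences with quasi-isomorphisms quoted from van den Bergh requires the \emph{modules} to be bounded above as well --- and the paper's own remark states that a) uses smoothness, not connectivity: the point is that smoothness provides an explicit cofibrant resolution of the form $M\widehat{\otimes}_{A}P$ with $P\ra A$ a perfect bimodule resolution. In e) you claim that restriction along a quasi-isomorphism ``preserves weak equivalences and sends cofibrant objects to cofibrant objects''; restriction is not left Quillen and does not preserve cofibrancy, and, more seriously, the assertion that a quasi-isomorphism of pseudocompact dg algebras induces an equivalence of the Positselski-type derived categories is precisely the nontrivial content of e) --- it fails without the smoothness and connectivity hypotheses, and is deduced in Keller--Yang from d) by checking that restriction identifies the compact generators $\pvd_{pc}(A')$ and $\pvd_{pc}(A)$ of the opposite categories. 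Relatedly, in d) the observation that every nonzero pseudocompact module has a nonzero finite-dimensional quotient only gives generation at the level of modules; you still need the derived-level statement that an object $M$ of $\cd_{pc}(A)$ with $\Hom_{\cd_{pc}(A)}(M,\Si^{n}S)=0$ for all finite-dimensional $S$ and all $n$ is zero, which again uses the $t$-structure coming from connectivity rather than a purely formal duality argument.
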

	\begin{Rem}
		In $ a) $ of the above Proposition, we only need the condition ‘smooth’. In $ b) $, we need the condition ‘connective’.
	\end{Rem}
	
	%
	
	For two objects $ L $ and $ M $ of $ \cd_{pc}(A) $, define
	$$ \RHom_{A}(L,M)=\ch om_{\cc^{dg}_{pc}(A)}(\mathbf{p}L,M) .$$
	Then we have $ \Hom_{\cd_{pc}(A)}(L,M)=\ch_{pc}(A)(\mathbf{p}L,M)=H^{0}(\ch om_{\cc^{dg}_{pc}(A)}(\mathbf{p}L,M)).$
	
	Let $ Y $ be a pseudocompact dg $ A $-$ A' $-bimodule and $ X $ a pseudocompact dg right $ A $-module. Let $ \mathbf{p}X $ be a cofibrant replacement of $ X $ in $ \cc_{pc}(A) $. Their \emph{complete derived tensor product} $ X\widehat{\otimes}^{\mathbf{L}}_{A}Y $ is defined by
	$$ X\widehat{\otimes}^{\mathbf{L}}_{A}Y=\varprojlim_{U,V}\mathbf{p}X/U\ten_{A}Y/V, $$ where $ U,V $ run through the system of open neighborhoods of zero in $ \mathbf{p}X $ and $ Y $ respectively. Up to quasi-isomorphism, it is well defined.

	Moreover, we have the tensor-Hom adjunction
	$$ \RHom_{A'}(X\widehat{\ten}^{\bL}_{A}Y,Z)\simeq\RHom_{A}(X,\RHom_{A'}(Y,Z)) $$ for objects $ X\in\cd_{pc}(A) $ and $ Z\in\cd_{pc}(A') $.
	
	\begin{Def}\em
		Let $ A $ be an object in $ \PcAlgc $. For a pseudocompact dg $ A $-bimodule $ M $, we define its \emph{derived dual} $ M^{\vee} $ as
		$$ M^{\vee}=\RHom_{A^{e}}(M,A^{e}) .$$ 
		In particular, the \emph{inverse dualizing bimodule} of $ A $ is defined as $ A^{\vee} $. 
		
		Let $ S $ be another finite dimensional separable $ k $-algebra and $ B $ an object in $ \mathrm{PcAlgc}(S) $. Let $ f $ be a morphism from $ B $ to $ A $. The \emph{inverse dualizing bimodule} $ \Theta_{f} $ of $ f $ is defined as
		$$ \Theta_{f}=\RHom_{A^{e}}(\cone(A\lcten_{B}A\ra A),A^{e}) ,$$ where $ A\lcten_{B}A $ is isomorphic to $ A\lcten_{B}B\lcten_{B}A $. 
	\end{Def}
	
	The morphism $ f $ induces the restriction functor $ f_{*}: \cc_{pc}(A)\ra\cc_{pc}(B) $. It fits into the usual adjunction pair $ (\bL f^{*}=?\lcten_{B}A , f_{*}) $ between $ \cd_{pc}(A) $ and $ \cd_{pc}(B) $. The morphism $ f $ also induces a morphism $ f^{e}:A^{e}\ra B^{e} $ between their enveloping algebra. By abuse of notation, we also denote the corresponding adjoint functors between $ \cd_{pc}(A^{e}) $ and $ \cd_{pc}(B^{e}) $ by $ (\bL f^{*}, f_{*}) $. For a pseudocompact dg $ B $-bimodule $ M $, we have the following useful formula
	\begin{equation*}
		\begin{split}
			\bL f^{*}(M)=&M\lcten_{B^{e}}A^{e}\\
			\cong&A\lcten_{B}M\lcten_{B}A.
		\end{split}
	\end{equation*}
	In particular, if we take $ M=B $, then $ \bL f^{*}(\cb)\cong A\lcten_{B}A $.

	\begin{Prop}\cite{kellerDerivedEquivalencesMutations2011,adachiDiscretenessSiltingObjects2019}
		Let $ A $ be a pseudocompact dg $ R $-algebra. The forgetful functor $ \cd_{pc}(A)\ra\cd(A) $ restricts to a triangle equivalence $ \per_{pc}(A)\ra\per(A) $. If $ A $ is also smooth, then it restricts to a triangle equivalence $ \pvd_{pc}(A)\ra\pvd(A) $.
	\end{Prop}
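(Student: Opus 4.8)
The plan is to reduce both statements to the behaviour of the forgetful functor $U\colon\cd_{pc}(A)\ra\cd(A)$ on the free module $A$ of rank one. Recall that $U$ is triangulated and commutes with shifts, and, crucially, that it leaves the underlying dg $k$-module of a pseudocompact dg $A$-module unchanged — it only forgets the topology.

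For the first statement, note that $\per_{pc}(A)$ is by definition the thick subcategory of $\cd_{pc}(A)$ generated by $A$ and $\per(A)$ the thick subcategory of $\cd(A)$ generated by $A$, so $U$ maps $\per_{pc}(A)$ into $\per(A)$. The only computation needed is that $U$ induces isomorphisms $\Hom_{\cd_{pc}(A)}(A,\Si^{n}A)\ra\Hom_{\cd(A)}(A,\Si^{n}A)$ for every $n\in\mathbb{Z}$: the module $A$ is cofibrant on both sides (the cokernel of $0\ra A$ is free as a graded module), so both groups are canonically $H^{n}(A)$ and $U$ respects this identification. A dévissage then upgrades this to full faithfulness of $U$ on $\per_{pc}(A)$: the full subcategory of those $Y$ for which $U$ induces bijections $\Hom_{\cd_{pc}(A)}(\Si^{m}A,Y)\ra\Hom_{\cd(A)}(\Si^{m}A,UY)$ for all $m$ is triangulated and closed under direct summands and contains $A$, hence contains $\per_{pc}(A)$; fixing such a $Y$ and varying the source by the same argument yields bijections $\Hom_{\cd_{pc}(A)}(X,Y)\ra\Hom_{\cd(A)}(UX,UY)$ for all $X,Y\in\per_{pc}(A)$. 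Being fully faithful on the idempotent complete category $\per_{pc}(A)$, the functor $U$ has essential image a thick subcategory of $\cd(A)$ containing $A$ and contained in $\per(A)$, hence equal to $\per(A)$; so $U$ restricts to a triangle equivalence $\per_{pc}(A)\ra\per(A)$.

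For the second statement, assume moreover that $A$ is smooth. I would first record the inclusions $\pvd(A)\subseteq\per(A)$ and $\pvd_{pc}(A)\subseteq\per_{pc}(A)$. For the former, smoothness gives $A\in\per(A^{e})$; resolving $A$ by a perfect complex of bimodules and applying $M\ten^{\mathbf{L}}_{A}(-)$ exhibits any $M$ whose underlying $k$-complex is perfect as a finite iterated extension of shifts of $M\ten_{k}A$, which is perfect over $A$. The pseudocompact inclusion is the analogous argument using $A\in\per_{pc}(A^{e})$, or one may simply invoke Proposition~\ref{Prop:pseudocompact for smooth dg}. Since $U$ does not alter the underlying $k$-complex, and since for an object of the perfect derived category membership in $\pvd_{pc}(A)$, resp.\ $\pvd(A)$, is governed by that $k$-complex being perfect (using $\Hom(A,\Si^{n}M)=H^{n}M$ and a dévissage over the perfect testing object), the equivalence $\per_{pc}(A)\ra\per(A)$ of the first part carries $\pvd_{pc}(A)$ bijectively onto $\pvd(A)$. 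Restricting it gives the desired triangle equivalence.

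The model-categorical inputs — cofibrancy of $A$, the computation of morphism spaces in $\cd_{pc}(A)$ and $\cd(A)$, exactness of $U$ — are routine and already recorded in Section~\ref{Section2}. The point that requires genuine care, and where I expect the main difficulty, is the comparison of the two \emph{a priori} rather different descriptions of the perfectly valued subcategory: one through a finiteness condition on morphisms out of perfect objects, the other through perfectness of the underlying $k$-complex. Seeing that smoothness is exactly what forces $\pvd$ inside $\per$ and reconciles the two descriptions is the crux; in the pseudocompact world this is precisely the role played by Proposition~\ref{Prop:pseudocompact for smooth dg}.
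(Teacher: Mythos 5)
The paper does not actually prove this Proposition: it is stated as a citation to Keller--Yang and Adachi--Mizuno--Yang, with no argument supplied, so there is no in-paper proof to compare yours against. On its own merits, your reconstruction is correct and is the standard argument: both $\per_{pc}(A)$ and $\per(A)$ are the thick subcategories generated by the rank-one free module, the forgetful functor identifies $\Hom(A,\Si^{n}A)$ with $H^{n}(A)$ on both sides (cofibrancy of $A$ in both structures, fibrancy of everything in the pseudocompact model structure), and d\'evissage plus idempotent completeness of $\per_{pc}(A)$ gives the equivalence; smoothness then forces $\pvd\subseteq\per$ on both sides and reduces the second claim to the first. Two small cautions. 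First, your appeal to Proposition~\ref{Prop:pseudocompact for smooth dg} for $\pvd_{pc}(A)\subseteq\per_{pc}(A)$ imports a connectivity hypothesis that the Proposition under review does not assume; your alternative direct argument via $A\in\per_{pc}(A^{e})$ is the one to keep. Second, your reconciliation of the two descriptions of the perfectly valued subcategory tacitly reads the finiteness condition in the paper's definition of $\pvd_{pc}(A)$ as finiteness of the \emph{total} graded Hom $\bigoplus_{n}\Hom_{\cd_{pc}(A)}(P,\Si^{n}M)$; read degreewise, the condition only gives finite-dimensionality of each $H^{n}(M)$ separately, which does not imply perfectness of the underlying $k$-complex without a boundedness argument (consider $A=k[[u]]$ with $u$ of degree $-2$ and $M=A$). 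The total-Hom reading is certainly the intended one, matching the cited sources, but it is worth stating explicitly since it is exactly the point you identify as the crux.
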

	
	\begin{Cor}
		Let $ f:A\ra A' $ be a morphism in $ \PcAlg $. Assume that $ A $ and $ A' $ are connective and smooth. Then $ f_{*}:\cd(A')\ra\cd(A) $ is a triangle equivalence if and only if $ f_{*}:\cd_{pc}(A')\ra\cd_{pc}(A) $ is.
	\end{Cor}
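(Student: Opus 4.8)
The plan is to show that each of the two conditions is equivalent to the single statement that $f$ is a quasi-isomorphism, so that the corollary follows immediately. The standing hypotheses that $A$ and $A'$ be connective and smooth will only be needed for the pseudocompact half.

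First I would treat the ordinary derived categories. The restriction functor $f_{*}:\cd(A')\ra\cd(A)$ fits into the adjunction $(\bL f^{*}=?\lten_{A}A',\,f_{*})$, so $f_{*}$ is a triangle equivalence if and only if $\bL f^{*}:\cd(A)\ra\cd(A')$ is one, the left adjoint of an equivalence being its quasi-inverse. Now $\bL f^{*}$ preserves arbitrary coproducts and sends the compact generator $A$ of $\cd(A)$ to $A\lten_{A}A'\cong A'$, which is a compact generator of $\cd(A')$; by the standard recognition criterion for equivalences of compactly generated triangulated categories, $\bL f^{*}$ is therefore an equivalence if and only if it is fully faithful on $A$, that is, if and only if the induced map $\RHom_{A}(A,A)\ra\RHom_{A'}(A',A')$ is a quasi-isomorphism. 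Since the free module of rank one is cofibrant one has $\RHom_{A}(A,A)=A$ and $\RHom_{A'}(A',A')=A'$, and unwinding the description of $\bL f^{*}$ as extension of scalars identifies this map with $f$. Hence $f_{*}:\cd(A')\ra\cd(A)$ is a triangle equivalence if and only if $f$ is a quasi-isomorphism.

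Next I would run the same argument in the pseudocompact setting, splitting it into the two implications. If $f$ is a quasi-isomorphism, then $f_{*}:\cd_{pc}(A')\ra\cd_{pc}(A)$ is a triangle equivalence by Proposition~\ref{Prop:pseudocompact for smooth dg}~e), which is the one place where connectivity and smoothness enter. Conversely, if $f_{*}:\cd_{pc}(A')\ra\cd_{pc}(A)$ is a triangle equivalence, its quasi-inverse is the left adjoint $\bL f^{*}=?\lcten_{A}A'$; since $\bL f^{*}$ carries the free module $A$ to $A\lcten_{A}A'\cong A'$, it restricts to a triangle equivalence $\per_{pc}(A)\ra\per_{pc}(A')$ between the thick subcategories generated by the free modules. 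In particular it is fully faithful there, and evaluating on $A$ identifies the resulting quasi-isomorphism $\RHom_{A}(A,A)\ra\RHom_{A'}(A',A')$ with $f$. Thus $f_{*}:\cd_{pc}(A')\ra\cd_{pc}(A)$ is a triangle equivalence if and only if $f$ is a quasi-isomorphism, and combining the two halves proves the corollary.

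I expect the only non-formal ingredients, and hence the main (though minor) obstacle, to be the two small identifications used in both settings: that $\RHom_{A}(A,-)$ is the identity on the free module, so $\RHom_{A}(A,A)=A$, and that $\bL f^{*}$ induces $f$ on its endomorphism complex. The first relies on the free module being cofibrant, in the ordinary case and, via Positselski's model structure, in the pseudocompact case; the second is immediate from the explicit description of $\bL f^{*}$ as (completed) extension of scalars. Everything else is a formal manipulation of the adjunction $(\bL f^{*},f_{*})$, of the recognition criterion for equivalences of compactly generated triangulated categories, and of Proposition~\ref{Prop:pseudocompact for smooth dg}~e).
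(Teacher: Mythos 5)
Your argument is correct. Note that the paper gives no written proof of this corollary: it is stated immediately after the proposition asserting that the forgetful functor restricts to a triangle equivalence $\per_{pc}(A)\ra\per(A)$, and the intended justification is evidently that $f_{*}$ being an equivalence (in either setting) can be tested via its left adjoint $\bL f^{*}$ on the thick subcategory generated by the free module, and that these subcategories are identified by the forgetful functor compatibly with extension of scalars. Your route is slightly different but entirely in the same spirit: you interpose the single criterion ``$f$ is a quasi-isomorphism'' and show each condition is equivalent to it, using compact generation of $\cd(A)$ by $A$ on the ordinary side, and, on the pseudocompact side, full faithfulness of $\bL f^{*}$ evaluated on the cofibrant free module for one implication and Proposition~2.17~e) for the other. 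This has the merit of being self-contained and of making explicit exactly where connectivity and smoothness are used (only in the appeal to part~e)); the paper's implicit route instead leans on the comparison $\per_{pc}\simeq\per$. One microscopic overstatement: in the pseudocompact converse you claim $\bL f^{*}$ restricts to an equivalence $\per_{pc}(A)\ra\per_{pc}(A')$, whose essential surjectivity would require knowing $f_{*}(A')\in\per_{pc}(A)$; but you only use full faithfulness on $A$, which follows directly from $\bL f^{*}$ being an equivalence, so the conclusion is unaffected.
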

	
	\subsection{Hochschild/cyclic homology in the pseudocompact setting}

	Let $ \Lambda $ be the dg algebra generated by an indeterminate $ \epsilon $ of cohomological degree $ -1 $ with $ \epsilon^{2}=0 $ and $ d\epsilon=0 $. The underlying complex of $ \Lambda $ is
	$$ \cdots\rightarrow k\epsilon\rightarrow k\rightarrow0\ra\cdots. $$
	Then a \emph{mixed complex} over $ k $ is a dg right $ \Lambda $-module whose underlying dg $ k $-module is $ (M,b) $ and where $ \epsilon $ acts by a closed endomorphism $ B $. Suppose that $ M=(M,b,B) $ is a mixed complex. Then the \emph{shifted mixed complex} $ \Si M $ is the mixed complex such that $ (\Si M)^{p}=M^{p-1} $ for all $ p $, $ b_{\Si M}=-b $ and $ B_{\Si M}=-B $. Let $ f:M\rightarrow M' $ be a morphism of mixed complexes. Then the \emph{mapping cone} over $ f $ is the mixed complex
	$$ \bigg(M'\oplus M,\begin{bmatrix}
		b_{M'} & f \\
		0 & -b_{M}
	\end{bmatrix},\begin{bmatrix}
		B_{M'} & 0 \\
		0 & -B_{M}
	\end{bmatrix}\bigg). $$
	We denote by $ \cm ix $ the category of mixed complexes and by $ \cd\cm ix $ the derived category of the dg algebra $ \Lambda $.

	Let $ A $ be an object in $ \mathrm{PcAlgc}(R) $ (see Definition~\ref{Def:augmented}). For an $ R $-bimodule $ U $, we define $ U_{R}=U/[R,U] $ and we let $ U^{R} $ be the $ R $-centralizer in $ U $. We associate a precyclic chain complex $ C(A) $ (see~\cite[Definition 2.5.1]{lodayCyclicHomology2013}) with $ A $ as follows: For each $ n\in\mathbb{N} $, its $ n $-th term is
	$$ C_{p}(A)=(A\ten_{R}\overline{A}^{\ten_{R}^{p}})_{R} .$$
	The degeneracy maps are given by
	\begin{equation*}
		d_{i}(a_{n},\ldots,a_{i},a_{i-1},\ldots,a_{0})=\left\{
		\begin{aligned}
			(a_{n},\ldots,a_{i}a_{i-1},\ldots,a_{0})&&\text{if $ i>0 $,}\\
			(-1)^{n+\sigma}(a_{0}a_{n},\ldots,a_{1})&&\text{if $ i=0 $},
		\end{aligned}
		\right.
	\end{equation*}
	where $ \sigma=(\mathrm{deg}a_{0})(\mathrm{deg}a_{1}+\cdots+\mathrm{deg}a_{n-1}) $. The cyclic operator is given by
	$$ t(a_{n-1},\ldots,a_{0})=(-1)^{n+\sigma}(a_{0},a_{n-1},a_{n-2},\cdots,a_{1}) .$$

	Then the corresponding \emph{product total complex} $ (H\!H(A),b) $ of $ (C(A),b=\sum_{i=0}^{n}(-1)^{i}d_{i}) $ is called the \emph{normalized Hochschild complex} of $ A $. The \emph{(continuous) Hochschild homology} of $ A $ is defined to be the cohomology of this complex. By~\cite[Proposition B.1]{vandenberghCalabiYauAlgebrasSuperpotentials2015}, the normalized Hochschild complex is quasi-isomorphic to $ A\lcten_{A^{e}}A $ in $ \cd(k) $.

	We associate a mixed complex $ (M(A),b,B) $ with this precyclic chain complex as follows: Consider the \emph{product total complex} $ (H\!H(A),b’) $ of $ (C(A),b’=\sum_{i=0}^{n-1}(-1)^{i}d_{i}) $. The underlying dg module of $ M(A) $ is the mapping cone over $ (1 − t) $ viewed as a morphism of complexes
	$$ 1-t:(H\!H(A),b')\ra(H\!H(A),b) ,$$ where $ b=\sum_{i=0}^{n}(-1)^{i}d_{i} $ and $ b'=\sum_{i=0}^{n-1}(-1)^{i}d_{i} $.
	Its underlying module is $ H\!H(A)\oplus H\!H(A) $; it is endowed with the grading whose $ n $-th component is $ H\!H(A)_{n}\oplus H\!H(A)_{n-1} $ and the differential is 
	$$ \begin{bmatrix} b & 1-t \\ 0 & -b' \end{bmatrix} .$$
	The operator $ B:M\ra M $ is
	$$ \begin{bmatrix} 0 & 0 \\ N & 0 \end{bmatrix} ,$$ where $ N=\sum_{i=0}^{n}t^{i} $.

	Let $ S $ be an other finite dimensional separable $ k $-algebra and $ B $ a pseudocompact dg $ S $-algebra. Let $ f $ be a morphism from $ B $ to $ A $. Then $ f $ induces a canonical morphism between their Hochschild complexes
	$$ \gamma_{f}:H\!H(B)\ra H\!H(A) .$$
	\begin{Def}\rm
		The \emph{(continuous) Hochschild homology} $ H\!H_{\bullet}(f) $ of $ f $ is the cohomology of the \emph{relative Hochschild complex} which is defined as follows
		$$ H\!H(f)=\cone(\gamma_{f}:HH(B)\ra HH(A)) .$$
	\end{Def}

\begin{Rem}
	By definition, a closed element $ \xi=(s\xi_{B},\xi_{A})\in \cone(\gamma_{f}:HH(B)\ra HH(A)) $ of degree $ -n $ consists of an element $ \xi_{B} \in HH(B) $ of degree $ -n+1 $, together with an element $ \xi_{A}\in HH(A) $ of degree $ -n $, such that $ b_{B}(\xi_{B})=0 $ and $ b_{A}(\xi_{A})+\gamma_{f}(\xi_{B})=0 $, where $ b_{B} $ and $ b_{A} $ are the Hochschild differentials of $ B $ and $ A $ respectively.
\end{Rem}
	
	\begin{Def}\rm
		The \emph{(continuous) cyclic homology} $ HC_{\bullet}(A) $ of $ A $ is defined to be the cohomology of the \emph{cyclic chain complex} of $ A $
		$$ HC(A)=M(A)\lten_{\Lambda}k .$$
		
		The \emph{(continuous) negative cyclic homology} $ HN_{\bullet}(A) $ of $ A $ is defined to be the cohomology of the \emph{negative cyclic chain complex} of $ A $
		$$ HN(A)=\RHom_{\Lambda}(k,M(A)). $$
	\end{Def}
	
	The augmentation morphism $ \Lambda\to k $ induces natural morphisms in $ \cd(k) $
	$$ HN(A)\rightarrow HH(A)\rightarrow HC(A).$$
	
	The morphism $ f $ also induces a canonical morphism between their mixed complexes
	$$ \gamma_{f}:M(B)\ra M(A) .$$
	We denote by $ M(f) $ the mapping cone over $ \gamma_{f} $.
	\begin{Def}\rm
		The \emph{(continuous) cyclic homology} $ HC_{\bullet}(f) $ of $ f:B\rightarrow A $ is defined to be the cohomology of the \emph{cyclic chain complex group} of $ f $
		$$ HC(f)=M(f)\lten_{\Lambda}k. $$
		The \emph{(continuous) negative cyclic homology} $ HN_{\bullet}(f) $ of $ f:B\rightarrow A $ is defined to be the cohomology of the \emph{negative cyclic chain complex} of $ f $
		$$ HN(f)=\RHom_{\Lambda}(k,M(f)). $$
	\end{Def}

	Similarly, the augmentation morphism $ \Lambda\to k $ induces natural morphisms in $ \cd(k) $
	$$ HN(f)\rightarrow HH(f)\rightarrow HC(f).$$

	\section{Relative Calabi--Yau completions in the pseudocompact setting}\label{Section3}
	Let $ k $ be a field and $ R $ a finite dimensional separable $ k $-algebra. Let $ A $ be an object in $ \mathrm{PcAlgc}(R) $. 
	\begin{Def}\rm
		Let $ M $ be a pseudocompact dg $ A $-bimodule. The \emph{completed tensor algebra} $ T_{A}(M) $ is defined as
		$$ T_{A}(M)={\displaystyle \prod_{n=0}^{\infty}M^{\widehat{\otimes}^{n}}}, $$ where $ M^{\widehat{\otimes}^{0}}=A $ and $ M^{\widehat{\otimes}^{n}}=\underbrace{M\widehat{\otimes}_{A}M\widehat{\otimes}_{A}\cdots\widehat{\otimes}_{A}M}_{\text{$ n $-times}} $ for $ n\geqslant1 $. The dg algebra structure on $ T_{A}(M) $ is given by the differentials of $ A $ and $ M $ and the multiplication is given by the concatenation product.
		
		The \emph{derived completed tensor algebra} is defined as
		$$ \mathbf{L}T_{A}(M)=T_{A}(\mathbf{p}M) $$
		where $ \mathbf{p}M $ is a cofibrant replacement of $ M $ as a pseudocompact dg $ A $-bimodule. Up to weak equivalence, it is well defined.
	\end{Def}
	The ideals $ T_{A}(M)_{\geqslant s}=\prod_{n\geqslant s}M^{\widehat{\otimes}^{n}} $ are clearly closed in $ T_{A}(M) $ and we have 
	$$ T_{A}(M)=\varprojlim_{s\in\mathbb{N}}T_{A}(M)/T_{A}(M)_{\geqslant s} .$$
	Thus, the completed tensor algebra is again in $ \mathrm{PcAlgc}(R) $.

	\subsection{Relative deformed Calabi--Yau completions}
	Let $ S $ be another finite dimensional separable $ k $-algebra and $ B $ an object in $ \mathrm{PcAlgc}(S) $. Let $ f $ be a morphism from $ B $ to $ A $. We assume that $ B $ and $ A $ are topologically homologically smooth and connective. Let $ [\xi] $ be an element in $ H\!H_{n-2}(f) $. Our objective is to define the deformed relative $ n $-Calabi–Yau completion of $ f:B\ra A $ with respect to the Hochschild homology class $ [\xi]\in H\!H_{n-2}(f) $.
	
	The morphism $ f $ induces a morphism in $ \cd_{pc}(A^{e}) $
	$$ m_{f}:B\lcten_{B^{e}}A^{e}\ra A .$$
	
	After taking the bimodule dual, using the smoothness of $ B $, we get a morphism
	$$ m_{f}^{\vee}:A^{\vee}\ra B^{\vee}\lcten_{B^{e}}A^{e}. $$ Let $ \Xi $ be the cofiber of $ m_{f}^{\vee} $.
	The dualizing bimodule $ \Theta_{f}=(\cof(B\lcten_{B^{e}}A^{e}\ra A))^{\vee} $ of $ f $ is quasi-isomorphic to $ \Si^{-1}\Xi $. 
	
	By the definition of Hochschild homology of $ f $, we have the following long exact sequence
	$$ \cdots\ra H\!H_{n-2}(B)\ra H\!H_{n-2}(A)\ra H\!H_{n-2}(f)\ra H\!H_{n-3}(B)\ra\cdots. $$ Thus, the Hochschild homology class $ [\xi]\in H\!H_{n-2}(f) $ induces an element $ [\xi_{B}] $ in $ H\!H_{n-3}(B) $.
	
	Notice that since $ B,A $ are smooth, we have the following isomorphisms:
	\begin{equation*}
		\begin{split}
			\Hom_{\cd_{pc}(B^{e})}(\Si^{n-2}B^{\vee},\Si B)\simeq& H^{3-n}(B\lcten_{B^{e}}B)=HH_{n-3}(B),\\
			\Hom_{\cd_{pc}(A^{e})}(\Si^{n-2}\Xi,\Si A)\simeq& H^{2-n}(\cone(B\lcten_{B^{e}}A\ra A\lcten_{A^{e}}A))\\
			\leftarrow&H^{2-n}(\cone(B\lcten_{B^{e}}B\ra A\lcten_{A^{e}}A))\\
			\simeq&HH_{n-2}(f).
		\end{split}
	\end{equation*}
	
	Thus, the homology class $ [\xi] $ induces a morphism in $ \cd_{pc}(A^{e}) $
	$$ \xi:\Si^{n-2}\Xi\ra\Si A $$ and the homology class $ [\xi_{B}] $ induces a morphism in $ \cd_{pc}(B^{e}) $
	$$ \xi_{B}:\Si^{n-2}B^{\vee}\ra\Si B .$$
	Moreover, we have the following commutative diagram in $ \cd_{pc}(A^{e}) $
	\[
	\begin{tikzcd}
		\bL f^{*}(\Si^{n-1}B^{\vee})\arrow[r]\arrow[d,"\xi_{B}"]&\Si^{n-2}\Xi\arrow[d,"\xi"]\\
		\bL f^{*}(\Si B)\arrow[r]&\Si A.
	\end{tikzcd}
	\]
	Therefore, the morphism $ \xi_{B} $ gives rise to a ‘deformation’
	$$ \bm\Pi_{n-1}(B,\xi_{B}) $$ of $ \bm\Pi_{n-1}(B)=\bL T_{B}(\Xi^{n-2}B^{\vee}) $, obtained by adding $ \xi_{B} $ to the differential of $ \bm\Pi_{n-1}(B) $; the morphism $ \xi $ gives rise to a ‘deformation’
	$$ \bm\Pi_{n}(A,B,\xi) $$ of $ \bm\Pi_{n}(A,B)=\bL T_{A}(\Si^{n-2}\Xi) $, obtained by adding $ \xi $ to the differential of $ \bL T_{A}(\Si^{n-2}\Xi) $; and the commutative diagram above gives rise to a morphism
	$$ \tilde{f}:\bm\Pi_{n-1}(B,\xi_{B})\ra\bm\Pi_{n}(A,B,\xi) .$$
	A standard argument shows that up to weak equivalence, the morphism $ \tilde{f} $ and the deformations $ \bm\Pi_{n-1}(B,\xi_{B}) $, $ \bm\Pi_{n}(A,B,\xi) $ only depend on the class $ [\xi] $.
	
	\begin{Def}\rm\cite[Definition 3.14]{yeungRelativeCalabiYauCompletions2016}\label{Def:deformed CY completion}
		The dg functor $ \tilde{f} $ defined above is called the \emph{deformed relative n-Calabi–Yau completion} of $ f:B\ra A $ with respect to the Hochschild homology class $ [\xi]\in H\!H_{n-2}(f) $.
	\end{Def}
	
	
	\begin{Thm}\cite[Theorem 3.23]{yeungRelativeCalabiYauCompletions2016}\cite[Proposition 5.28]{bozecRelativeCriticalLoci2020}\label{Thm:Relative CY completion to left steucture}\label{Thm:Relative defomed CY completion has a canonical left CY}
		If $ [\xi] $ has a negative cyclic lift, then each choice of such a lift gives rise to a canonical left $ n $-Calabi–Yau structure on the morphism
		$$ \tilde{f}:\bm\Pi_{n-1}(B,\xi_{B})\ra\bm\Pi_{n}(A,B,\xi) .$$
	\end{Thm}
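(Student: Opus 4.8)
The plan is to transport the proofs of Yeung~\cite[Theorem 3.23]{yeungRelativeCalabiYauCompletions2016} and of~\cite[Proposition 5.28]{bozecRelativeCriticalLoci2020} into the pseudocompact framework set up in Sections~\ref{Section2} and~\ref{Section3}. Every ingredient used in the classical argument — the completed tensor algebra, the bimodule dual $(-)^{\vee}=\RHom_{(-)^{e}}(-,(-)^{e})$, the Hochschild, cyclic and negative cyclic complexes of an algebra and of a morphism, and the tensor--Hom adjunction — has by construction a pseudocompact analogue preserving smoothness and connectivity, so the real work is to check that the homological identities behind the classical proof survive the passage to $\varprojlim$.

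First I would unwind the statement. A left $n$-Calabi--Yau structure on $\tilde f:\bm\Pi_{n-1}(B,\xi_{B})\to\bm\Pi_{n}(A,B,\xi)$ is a class $[\tilde\omega]\in HN_{n}(\tilde f)$ whose image $[\omega]\in HH_{n}(\tilde f)$ is non-degenerate in the sense of Brav--Dyckerhoff~\cite{bravRelativeCalabiYau2019}. Using the long exact sequences relating $HH(\tilde f)$ to the Hochschild homology of its source and target, non-degeneracy of $[\omega]$ reduces, by a five-lemma argument across the relative exact triangles, to two facts: (a) the boundary class $[\omega_{B}]\in HN_{n-1}(\bm\Pi_{n-1}(B,\xi_{B}))$ is a left $(n-1)$-Calabi--Yau structure, and (b) capping with $[\omega]$ induces an isomorphism $\Theta_{\tilde f}\to\Si^{-n}\bm\Pi_{n}(A,B,\xi)$ in $\cd_{pc}$ of bimodules.

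Next I would construct the candidate class. Starting from the given negative cyclic lift $\widetilde{[\xi]}\in HN_{n-2}(f)$, one combines it with the tautological class carried by the undeformed relative completion: the presentation $\bm\Pi_{n}(A,B)=\bL T_{A}(\Si^{n-2}\Xi)$, together with the identification $\Theta_{f}\simeq\Si^{-1}\Xi$, yields a canonical self-dual bimodule resolution of $\bm\Pi_{n}(A,B)$ coming from the pairing between the new generators $\Si^{n-2}\Xi$ and $A$, hence a canonical Hochschild class; adding $\xi$ to the differential twists this into a resolution of $\bm\Pi_{n}(A,B,\xi)$, and the relative Connes sequence assembles the resulting $A$- and $B$-data with $\widetilde{[\xi]}$ into the sought class $[\tilde\omega]\in HN_{n}(\tilde f)$. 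This is where the chosen lift enters, and a standard naturality argument shows that up to the relevant equivalence the outcome depends only on $[\xi]$ and the lift. Part (a) then becomes the absolute deformed Calabi--Yau completion theorem for $B$, which is proved by exactly the same mechanism, now in the pseudocompact setting.

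The main obstacle is the non-degeneracy computation (b). Classically it amounts to checking that cap product with the canonical class identifies the explicit (finite over the enveloping algebra) bimodule resolution of $\bm\Pi_{n}(A,B,\xi)$ with its own shifted dual, a degreewise finite linear-algebra verification. Here one must rerun it when all objects are pro-(finite-dimensional) and the duals and tensor products are the completed ones: the point is that smoothness of $A$ and $B$ (Proposition~\ref{Prop:pseudocompact for smooth dg}) makes $m_{f}^{\vee}$ and the identification $\Theta_{f}\simeq\Si^{-1}\Xi$ available in $\cd_{pc}$, and that the tensor algebras in play are generated in a single bimodule degree, so the limits defining $\widehat{\otimes}$ and $\RHom_{pc}$ are "locally finite" and preserve the needed quasi-isomorphisms, in the spirit of~\cite[Appendix B]{vandenberghCalabiYauAlgebrasSuperpotentials2015} and~\cite[Appendix A]{kellerDerivedEquivalencesMutations2011}. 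Once (a) and (b) hold, the five-lemma argument upgrades them to full non-degeneracy, giving the claimed canonical left $n$-Calabi--Yau structure on $\tilde f$.
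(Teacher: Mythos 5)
Your proposal follows essentially the same route as the paper, which gives no proof of this theorem beyond citing Yeung and Bozec--Calaque--Scherotzke and remarking that their arguments can be adapted to the pseudocompact setting. Your sketch of that adaptation --- building the negative cyclic class from the tautological class of the undeformed completion together with the chosen lift of $[\xi]$, and reducing non-degeneracy via the five lemma to the absolute statement for $B$ plus the self-duality of the explicit bimodule resolution, with the completed tensor products and duals controlled by smoothness as in the cited appendices --- is a correct and in fact more detailed account of exactly the adaptation the paper has in mind.
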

	
	\begin{Rem}
		In the pseudocompact setting, the situation is different from \cite[Theorem 3.23]{yeungRelativeCalabiYauCompletions2016} and \cite[Proposition 5.28]{bozecRelativeCriticalLoci2020}. But the proof for the above Theorem can be adapted from their proof.
	\end{Rem}

	%
	%
	
	\section{Ice quiver mutations and complete relative Ginzburg algebras}\label{Section4}

	\subsection{Ice quivers}
	\begin{Def}\rm
		A \emph{quiver} is a tuple $ Q=(Q_{0}, Q_{1},s,t) $, where $ Q_{0} $ and $ Q_{1} $ are sets, and $ s, t:Q_{1}\ra Q_{0} $ are functions. We think of the elements of $ Q_{0} $ as vertices and those of $ Q_{1} $ as arrows, so that each $ \alpha\in Q_{1} $ is realised as an arrow $ \alpha:s(\alpha)\ra t(\alpha) $. We call $ Q $ \emph{finite} if $ Q_{0} $ and $ Q_{1} $ are finite sets.
	\end{Def}
	\begin{Def}\rm
		Let $ Q $ be a quiver. A quiver $ F=(F_{0}, F_{1}, s', t') $ is a \emph{subquiver} of $ Q $ if it is a quiver such that $ F_{0}\subseteq Q_{0} $, $ F_{1}\subseteq Q_{1} $ and the functions $ s' $ and $ t' $ are the restrictions of $ s $ and $ t $ to $ F_{1} $ . We say $ F $ is a \emph{full subquiver} if $ F_{1}=\{\alpha\in Q_{1}\,:s(\alpha),t(\alpha)\in F_{0} \} $, so that a full subquiver of $ Q $ is completely determined by its set of vertices.	
	\end{Def}
	\begin{Def}\rm
		An \emph{ice quiver} is a pair $ (Q,F) $, where $ Q $ is a finite quiver, $ F $ is a (not necessarily full) subquiver of $ Q $.
		We call $ F_{0} $, $ F_{1} $ and $ F $ the frozen vertices, arrows and subquiver respectively. We also call $ Q_{0}\setminus F_{0} $ and $ Q_{1}\setminus F_{1} $ the unfrozen vertices and arrows respectively.	
	\end{Def}
	\subsection{Combinatorial mutations}
	\begin{Def}\rm(Pressland~\cite[Deﬁnition 4.4]{presslandMutationFrozenJacobian2020})\label{Def:Combinatorial mutations}
		Let $ (Q, F) $ be an ice quiver and let $ v $ be an unfrozen vertex such that no loops or 2-cycles of $ Q $ are incident with $ v $. Then the \emph{extended mutation} $ \mu_{v}^{P}(Q,F)=(\mu_{v}^{P}(Q),\mu_{2}^{P}(F)) $ of $ (Q, F) $ at $ v $ is defined to be the output of the following procedure.
		\begin{itemize}
			\item[(1)] For each pair of arrows $ \alpha:u\ra v $ and $ \beta:v\ra w $, add an unfrozen arrow $ [\beta\alpha]:u\ra w $ to $ Q $.
			\item[(2)] Replace each arrow $ \alpha:u\ra v $ by an arrow $ \alpha^{*}:v\ra u $, and each arrow $ \beta:v\ra w $ by an arrow $ \beta^{*}:w\ra v $.
			\item[(3)] Remove a maximal collection of unfrozen 2-cycles, i.e. 2-cycles avoiding the subquiver $ F $.
			\item[(4)] Choose a maximal collection of half-frozen 2-cycles, i.e. 2-cycles in which precisely one arrow is frozen. Replace each 2-cycle in this collection by a frozen arrow, in the direction of the unfrozen arrow in the 2-cycle.
		\end{itemize}
	\end{Def}
	
	\begin{Rem}
		Because of the choices involved in steps ($ 3) $ and $ (4) $, this operation is only defined up to quiver isomorphism. If we ignore all arrows between frozen vertices, we obtain the usual definition of Fomin--Zelevinsky mutation (see~\cite[Definition 4.2]{fominClusterAlgebrasFoundations2002}).
	\end{Rem}
	
	\begin{Ex}
		Consider the ice quiver $ (Q,F) $ given by
		\[
		\begin{tikzcd}
			&\color{blue}\boxed{2}\arrow[dr,"c"]&\\
			\color{blue}\boxed{1}\arrow[ur,blue,"b"]&&3\arrow[ll,"a"]\,,
		\end{tikzcd}
		\]
		where the blue part is the subquiver $ F $.
		
		We perform the extended Fomin--Zelevinsky mutation at the vertex 3. Then we get the following ice quiver $ \mu_{3}^{P}(Q,F)=(Q',F') $
		\[
		\begin{tikzcd}
			&\color{blue}\boxed{2}\arrow[dl,blue,swap,"{[ac]}"]&\\
			\color{blue}\boxed{1}\arrow[rr,swap,"a^{*}"]&&3\arrow[ul,swap,"c^{*}"]\,.
		\end{tikzcd}
		\]
		
		If we preform the usual Fomin--Zelevinsky mutation at the vertex 3, we get a different final quiver
		\[
		\begin{tikzcd}
			&2&\\
			1\arrow[rr,swap,"a^{*}"]&&3\arrow[ul,swap,"c^{*}"]\,.
		\end{tikzcd}
		\]	
	\end{Ex}

	\subsection{Ice quivers with potential}
	Let $ k $ be a field. Let $ Q $ be a finite quiver.
	\begin{Def}\rm\label{Def: semisimple algebra of vertices}
		Let $ S $ be the semisimple $ k $-algebra $ \prod_{i\in Q_{0}}ke_{i} $. The vector space $ kQ_{1} $ naturally becomes an $ S $-bimodule. Then the $ \emph{complete path algebra} $ of $ Q $ is the completed tensor algebra
		$$ \widehat{kQ}=T_{S}(kQ_{1}) .$$ It has underlying vector space 
		$$ \prod_{d=0}^{\infty}(kQ_{1})^{\otimes_{S}^{d}} ,$$ and multiplication given by concatenation. The algebra $ \widehat{kQ} $ becomes a graded pseudocompact $ S $-algebra.
	\end{Def}
	
	\begin{Rem}
		The complete path algebra is a pseudocompact algebra by equipping it with the $ \mathfrak{m} $-adic topology, where $ \mathfrak{m} $ is the two-sided \emph{arrow ideal}
		$$ \mathfrak{m}=\prod_{d=1}^{\infty}(kQ_{1})^{\otimes_{S}^{d}} .$$ 
	
		The closure of the ideal generated by a set $ R\subseteq \widehat{kQ} $ is
		$$ \overline{\langle R\rangle}=\{\sum_{i=1}^{\infty}a_{i}r_{i}b_{i}:a_{i},b_{i}\in\widehat{kQ},r_{i}\in R\}.$$
	\end{Rem}
	
	\begin{Def}\rm\cite[Deﬁnition 2.8]{presslandMutationFrozenJacobian2020}
		The natural grading on $ \widehat{kQ} $ induces a grading on the continuous Hochschild homology $ H\!H_{0}(\widehat{kQ})=\widehat{kQ}/[\widehat{kQ},\widehat{kQ}] $. A \emph{potential} on $ Q $ is an element $ W $ in $ H\!H_{0}(\widehat{kQ}) $ expressible as a (possibly infinite) linear combination of homogeneous elements of degree at least 2, such that any term involving a loop has degree at least 3. An \emph{ice quiver with potential} is a tuple $ (Q, F, W) $ in which $ (Q, F) $ is a finite ice quiver and $ W $ is a potential on $ Q $. If $ F=\emptyset $ is the empty quiver, then $ (Q,\emptyset,W)=(Q,W) $ is called simply a \emph{quiver with potential}. We say that $ W $ is \emph{irredundant} if each term of $ W $ includes at least one unfrozen arrow.
	\end{Def}
	
	A potential can be thought of as a formal linear combination of cyclic paths in $ Q $ (of length at least 2), considered up to the equivalence relation on such cycles induced by
	$$ \alpha_{n}\cdots\alpha_{1}\sim\alpha_{n-1}\cdots\alpha_{1}\alpha_{n} .$$
	
	\begin{Def}\rm
		Let $ p=\alpha_{n}\cdots\alpha_{1} $ be a cyclic path, with each $ \alpha_{i}\in Q_{1} $, and let $ \alpha\in Q_{1} $ be any arrow. Then the \emph{cyclic derivative} of $ p $ with respect to $ \alpha $ is
		$$ \partial_{\alpha}p=\Sigma_{\alpha_{i}=\alpha}\alpha_{i-1}\cdots\alpha_{1}\cdots\alpha_{i+1} .$$ We extend $ \partial_{\alpha} $ by linearity and continuity. Then it determines a map $ H\!H_{0}(\widehat{kQ})\ra\widehat{kQ} $. For an ice quiver with potential $ (Q,F,W) $, we define the \emph{relative Jacobian algebra}
		$$ J(Q,F,W)=\widehat{kQ}/\overline{\langle\partial_{\alpha}W:\alpha\in Q_{1}\setminus F_{1}\rangle} .$$
		If $ F=\emptyset $, we call $ J(Q,W)=J(Q,\emptyset,W) $ the \emph{Jacobian algebra} of the quiver with potential $ (Q, W) $.
	\end{Def}

	\begin{Def}\rm
		Let $ Q $ be a quiver. An ideal of $ \widehat{kQ} $ is called \emph{admissible} if it is contained in the square of the closed ideal generated by the arrows of $ Q $. We call an ice quiver with potential $ (Q,F,W) $ \emph{reduced} if $ W $ is irredundant and the Jacobian ideal of $ \widehat{kQ} $ determined by $ F $ and $ W $ is admissible. An ice quiver with potential $ (Q, F, W) $ is \emph{trivial} if its relative Jacobian algebra $ J(Q,F,W) $ is a product of copies of the base field $ k $.
	\end{Def}
	
	\begin{Def}\rm\cite[Deﬁnition 3.7]{presslandMutationFrozenJacobian2020}
		Let $ (Q,F,W) $ and$  (Q',F',W') $ be ice quivers with potential such that $ Q_{0}=Q'_{0} $ and $ F_{0}=F'_{0} $. In particular, this means that $ \widehat{kQ} $ and $ \widehat{kQ'} $ are complete tensor algebras over the same semisimple algebra $ S=kQ_{0} $. An isomorphism $ \varphi:\widehat{kQ}\ra\widehat{kQ'} $ is said to be a \emph{right equivalence} of the ice quivers with potential if
		\begin{itemize}
			\item[(1)] $ \varphi|_{S}=\id_{S} $,
			\item[(2)] $ \varphi(\widehat{kF})=\widehat{kF'} $, where $ \widehat{kF} $ and $ \widehat{kF'} $ are treated in the the natural way as subalgebras of $ \widehat{kQ} $ and $ \widehat{kQ'} $ respectively, and
			\item[(3)] $ \varphi(W) $ equals $ W' $ in $ H\!H_{0}(\widehat{kQ'}) $.
		\end{itemize}
	\end{Def}
	
	
	The following lemma provides a normal form for irredundant potentials, up to right equivalence.
	\begin{Lem}\cite[Lemma 3.14]{presslandMutationFrozenJacobian2020}\label{Lem:Normal form of potential}
		Let $ (Q,F,W) $ be an ice quiver with potential such that W is irredundant. Then $ W $ admits a representative of the form
		\begin{equation}\label{Equ:potential decomp}
			\xymatrix{
				\widetilde{W}=\displaystyle\sum_{i=1}^{M}\alpha_{i}\beta_{i}+\sum_{i=M+1}^{N}(\alpha_{i}\beta_{i}+\alpha_{i}p_{i})+W_{1}
			}	
		\end{equation} 
		for some arrows $ \alpha_{i} $ and  $ \beta_{i} $ and elements $ p_{i}\in\mathfrak{m}^{2} $, where
		\begin{itemize}
			\item[(i)] $ \alpha_{i} $ is unfrozen for all $ 1\leqslant i\leqslant N $, and $ \beta_{i} $ is frozen if and only if $ i>M $. Then for $ 1\leqslant i\leqslant M $, the $ \alpha_{i}\beta_{i} $ are unfrozen 2-cycles and for $ M+1\leqslant i\leqslant N $, they are half frozen 2-cycles,
			\item[(ii)] the arrows $ \alpha_{i} $ and $ \beta_{i} $ with $ 1\leqslant i\leqslant M $ each appear exactly once in the expression~(\ref{Equ:potential decomp}),
			\item[(iii)] the arrows $ \beta_{i} $ for $ 1\leqslant i\leqslant N $, do not appear in any of the $ p_{j} $, and
			\item[(iv)] the arrow $ \alpha_{i} $ and $ \beta_{i} $, for $ 1\leqslant i\leqslant N $, do not appear in the term $ W_{1} $, and this term does not contain any 2-cycles.
		\end{itemize}
	\end{Lem}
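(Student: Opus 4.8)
The plan is to adapt the proof of Derksen--Weyman--Zelevinsky's splitting theorem \cite{derksenQuiversPotentialsTheir2008} to the ice setting, the sole restriction being that every change of variables must fix $S$ and preserve the frozen subalgebra $\widehat{kF}$ (so it may send a frozen arrow only to a combination of frozen paths, while an unfrozen arrow may pick up an arbitrary parallel path, frozen or not). First I would split off the part $W_{2}$ of $W$ spanned by $2$-cycles, writing $W=W_{2}+W_{\geqslant3}$ with $W_{\geqslant3}\in\mathfrak{m}^{3}$. Since $W$ is irredundant, no $2$-cycle appearing in $W_{2}$ has both arrows frozen, so every such $2$-cycle is unfrozen or half-frozen. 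Viewing $W_{2}$, for each unordered pair of vertices $\{x,y\}$, as a bilinear expression in the arrows between $x$ and $y$, and using the cyclic identity $\gamma\delta\sim\delta\gamma$ to always place the unfrozen arrow of a half-frozen $2$-cycle first, I would bring $W_{2}$ to a diagonal shape by a linear change of the unfrozen arrows together with a linear change of the frozen arrows. The outcome is a right equivalence after which $W_{2}=\sum_{i=1}^{M}\alpha_{i}\beta_{i}+\sum_{i=M+1}^{N}\alpha_{i}\beta_{i}$, where the $\alpha_{i}$ are pairwise distinct unfrozen arrows, the $\beta_{i}$ are pairwise distinct with $\beta_{i}$ frozen exactly for $i>M$, and $\{\alpha_{i}\}\cap\{\beta_{j}\}=\emptyset$; in particular the $\alpha_{i}\beta_{i}$ are the only $2$-cycles of the whole potential. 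This establishes item (i).

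Next I would clean up the terms of degree $\geqslant3$ by induction on the degree $d$, composing unitriangular changes of variables on \emph{unfrozen} arrows and passing to an $\mathfrak{m}$-adic limit. Assuming the potential already has the stated shape in degrees $<d$, and given an unwanted degree-$d$ term $T$: if $T$ involves some $\beta_{i}$ with $1\leqslant i\leqslant N$, rotate it cyclically to $T=\beta_{i}r$; because $\alpha_{i}\beta_{i}$ is a $2$-cycle, $r$ is a path parallel to the unfrozen arrow $\alpha_{i}$, and the substitution $\alpha_{i}\mapsto\alpha_{i}-r$ cancels $T$ against $\alpha_{i}\beta_{i}$ while creating only terms of degree $>d$. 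If, after all such $\beta$'s have been removed, $T$ still involves some $\alpha_{i}$ with $i\leqslant M$, rotate it to $T=\alpha_{i}r$, note $r$ is parallel to $\beta_{i}$, and apply $\beta_{i}\mapsto\beta_{i}-r$, which is legitimate since $\beta_{i}$ is unfrozen for $i\leqslant M$. Each substitution has identity linear part, hence fixes all strictly lower degrees (and in particular does not disturb $W_{2}$); finitely many of them fix degree $d$, and the infinite composite converges $\mathfrak{m}$-adically to a right equivalence. The place where the frozen constraint genuinely bites is that for $i>M$ the partner $\beta_{i}$ is frozen and cannot be modified, so the occurrences of $\alpha_{i}$ with $i>M$ cannot be removed --- which is precisely why the normal form must tolerate the extra summand $\alpha_{i}p_{i}$.

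Finally, reading off the normal form: let $\widetilde{W}$ be the resulting potential, let $W_{1}$ collect the terms of $\widetilde{W}$ containing none of the $\alpha_{i},\beta_{i}$ with $1\leqslant i\leqslant N$, and for $M+1\leqslant i\leqslant N$ collect the remaining terms, each of which (after cyclic rotation) begins with some $\alpha_{i}$, attributing each such term to one chosen $\alpha_{i}$ so as to write it as $\alpha_{i}(\beta_{i}+p_{i})$; then $p_{i}\in\mathfrak{m}^{2}$ because $\alpha_{i}$ lies in no $2$-cycle of $\widetilde{W}$ other than $\alpha_{i}\beta_{i}$, so $p_{i}$ has no linear term. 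Conditions (ii)--(iv) follow from the second step: $\beta_{i}$ ($1\leqslant i\leqslant N$) occurs only in $\alpha_{i}\beta_{i}$, hence in no $p_{j}$ and not in $W_{1}$; $\alpha_{i}$ and $\beta_{i}$ for $i\leqslant M$ occur only in $\alpha_{i}\beta_{i}$; and $W_{1}$ has no $2$-cycles since every $2$-cycle of $\widetilde{W}$ involves some $\alpha_{i}$ or $\beta_{i}$. I expect the main obstacle to be the careful bookkeeping in the first step, namely checking that the degree-$2$ normal form is reachable through changes of variables compatible with the partition into frozen and unfrozen arrows, and in particular that each unfrozen arrow can be arranged to lie in exactly one $2$-cycle; the higher-degree induction is then the usual DWZ argument, with the parallel-path observation supplying the required substitutions.
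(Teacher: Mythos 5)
The paper offers no proof of this lemma: it is imported verbatim from Pressland's [Lemma 3.14], whose proof is exactly the adaptation of the Derksen--Weyman--Zelevinsky splitting argument you outline --- diagonalise the degree-two part by a linear change of arrows compatible with the frozen subquiver (possible because irredundancy kills the frozen--frozen block of the pairing, and that block is preserved by the allowed block-triangular base changes), then remove the higher-degree occurrences of the $\beta_{i}$, and of $\alpha_{i},\beta_{i}$ for $i\leqslant M$, by unitriangular substitutions on unfrozen arrows, the frozen $\beta_{i}$ for $i>M$ being immovable and thus forcing the residual terms $\alpha_{i}p_{i}$. Your argument is correct and essentially identical to the source's; the only point worth making explicit is that ``admits a representative'' must be read as ``after a right equivalence'' (which is how the paper uses the lemma, always in combination with Lemma~\ref{Lem:right-equivalent to dg}), since without a change of variables the degree-two part need not split into disjoint $2$-cycles.
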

	
	The following result allows us to replace any ice quiver with potential by a reduced one, without affecting the isomorphism class of the Jacobian algebra.
	
	\begin{Thm}\cite[Theorem 3.6]{presslandMutationFrozenJacobian2020}\label{Thm:reduction}
		Let $ (Q,F,W) $ be an ice quiver with potential. Then there exists a reduced ice quiver with potential $ (Q_{red} , F_{red},W_{red}) $ such that $ J(Q,F,W)\cong J(Q_{red} , F_{red},W_{red}) $.
	\end{Thm}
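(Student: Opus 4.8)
The plan is to adapt the splitting argument of Derksen--Weyman--Zelevinsky~\cite{derksenQuiversPotentialsTheir2008} to the ice setting, using Lemma~\ref{Lem:Normal form of potential} as the key input. First I would reduce to the case that $W$ is irredundant: deleting from $W$ every term involving no unfrozen arrow leaves the relative Jacobian ideal $\overline{\langle\partial_{\alpha}W:\alpha\in Q_{1}\setminus F_{1}\rangle}$ unchanged, because such a term has vanishing cyclic derivative with respect to each unfrozen arrow, so $J(Q,F,W)$ is unaffected. Then, by Lemma~\ref{Lem:Normal form of potential}, replace $W$ by the right-equivalent representative $\widetilde{W}$ of~(\ref{Equ:potential decomp}). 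A preliminary lemma (the chain rule for cyclic derivatives, as in~\cite{derksenQuiversPotentialsTheir2008,presslandMutationFrozenJacobian2020}) shows that any right equivalence carries each relation $\partial_{\alpha}W$ to an invertible $\widehat{kQ}$-linear combination of the relations of the target and respects the frozen/unfrozen dichotomy, hence induces an isomorphism of relative Jacobian algebras; so we may work with $\widetilde{W}$. Note also that irredundancy rules out fully frozen $2$-cycle terms, so every $2$-cycle occurring in $\widetilde W$ is one of the unfrozen or half-frozen ones exhibited by the normal form.

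The heart of the proof is an ice-quiver analogue of the DWZ splitting theorem. I would construct, as an $\mathfrak{m}$-adically convergent infinite composition of ``unitriangular'' automorphisms of $\widehat{kQ}$ --- each one fixing $S=kQ_{0}$, fixing every frozen arrow, and sending every unfrozen arrow to itself plus an element of $\mathfrak{m}^{2}$ --- a right equivalence exhibiting $(Q,F,\widetilde{W})$ as a direct sum, over a partition of the arrows and keeping the vertex set $Q_{0}$, of: the trivial ice quivers with potential $(\{\alpha_{i},\beta_{i}\},\emptyset,\alpha_{i}\beta_{i})$ for $1\leqslant i\leqslant M$; the half-frozen pieces $(\{\alpha_{i},\beta_{i}\},\{\beta_{i}\},\alpha_{i}\beta_{i})$ for $M<i\leqslant N$; and a reduced ice quiver with potential $(Q_{\mathrm{rest}},F_{\mathrm{rest}},W_{\mathrm{rest}})$ whose potential is irredundant with admissible relative Jacobian ideal and which involves none of the $\alpha_{i},\beta_{i}$. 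For $1\leqslant i\leqslant M$ both $\alpha_{i}$ and $\beta_{i}$ are unfrozen, so one may copy the DWZ construction verbatim, using $\partial_{\alpha_{i}}\widetilde{W}=\beta_{i}+\cdots$ and $\partial_{\beta_{i}}\widetilde{W}=\alpha_{i}+\cdots$ to absorb the higher-order tails step by step. For $M<i\leqslant N$ only $\alpha_{i}$ may be moved, since $\beta_{i}$ is frozen and a right equivalence must stabilise $\widehat{kF}$; here one exploits that, by the normal form, $\beta_{i}$ occurs in $\widetilde{W}$ solely through the term $\alpha_{i}(\beta_{i}+p_{i})$, so a one-sided substitution $\alpha_{i}\mapsto\alpha_{i}+(\text{higher order})$, together with corrections of the remaining unfrozen arrows, suffices to eliminate the $\alpha_{i}p_{i}$ tail and isolate $\alpha_{i}\beta_{i}$.

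With the splitting in hand, the conclusion is a computation of Jacobian algebras. The trivial pieces have $J\cong ke_{s(\alpha_{i})}\times ke_{t(\alpha_{i})}$ (a product of copies of $k$), so they contribute nothing beyond their vertices; each half-frozen piece has the single relation $\partial_{\alpha_{i}}(\alpha_{i}\beta_{i})=\beta_{i}$, so its relative Jacobian algebra is the path algebra of the single arrow $\alpha_{i}\colon s(\alpha_{i})\to t(\alpha_{i})$ --- which is precisely the relative Jacobian algebra of the ice quiver with one frozen arrow in that direction, exactly what step~(4) of Pressland's combinatorial mutation produces. Since $J$ sends a direct sum of ice quivers with potential to the coproduct (amalgamated over the shared vertices) of the individual relative Jacobian algebras, we obtain $J(Q,F,W)\cong J(Q_{\mathrm{red}},F_{\mathrm{red}},W_{\mathrm{red}})$, where $(Q_{\mathrm{red}},F_{\mathrm{red}},W_{\mathrm{red}})$ is $(Q_{\mathrm{rest}},F_{\mathrm{rest}},W_{\mathrm{rest}})$ with one extra isolated frozen arrow $s(\alpha_{i})\to t(\alpha_{i})$ adjoined for each $M<i\leqslant N$ and $W_{\mathrm{red}}=W_{\mathrm{rest}}$. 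It is reduced: $W_{\mathrm{rest}}$ is irredundant (the new arrows carry no term) and its relative Jacobian ideal --- unchanged by adjoining arrows absent from the potential --- is admissible.

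The step I expect to be the main obstacle is the ice analogue of the splitting theorem for the half-frozen $2$-cycles: the classical DWZ argument hinges on adjusting both arrows of each $2$-cycle, whereas here a right equivalence must preserve $\widehat{kF}$ and hence essentially fixes the frozen arrow $\beta_{i}$. One must verify that a genuinely one-sided change of variables, touching only the unfrozen arrows, still converges $\mathfrak{m}$-adically and still produces a clean direct-sum decomposition; the normal-form properties~(iii) and~(iv) of Lemma~\ref{Lem:Normal form of potential}, namely that each $\beta_{i}$ ($i\leqslant N$) appears exactly once and that $W_{1}$ contains no $2$-cycles, are precisely what makes this possible.
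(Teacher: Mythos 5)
The paper does not actually prove this statement (it is imported from Pressland), but the correct shape of the reduction is visible in the lemma that follows it, on the quasi-isomorphism $\bm\Gamma_{rel}(Q,F,W)\ra\bm\Gamma_{rel}(Q_{red},F_{red},W_{red})$, and your proposal diverges from it at precisely the step you flag as the main obstacle; that divergence is fatal. You propose to split each half-frozen $2$-cycle off as a direct summand $(\{\alpha_i,\beta_i\},\{\beta_i\},\alpha_i\beta_i)$, leaving a complement not involving $\alpha_i,\beta_i$, so that the reduced potential is $W_1$ and the surviving frozen arrows are isolated. No right equivalence can achieve this, and the resulting Jacobian algebra is wrong. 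Take $Q_0=\{1,2,3\}$ with $\alpha:1\to 2$ unfrozen, $\beta:2\to 1$ frozen, $\gamma:2\to 3$ and $\delta:3\to 1$ unfrozen, $F$ the subquiver on $\{1,2\}$ with the single arrow $\beta$, and $W=\beta\alpha+\delta\gamma\alpha$; this is already in the normal form of Lemma~\ref{Lem:Normal form of potential} with $M=0$, $N=1$, $p_1=\delta\gamma$, $W_1=0$. The relations are $\beta+\delta\gamma$, $\alpha\delta$, $\gamma\alpha$, and eliminating $\beta=-\delta\gamma$ shows $J(Q,F,W)$ is $7$-dimensional (spanned by $e_1,e_2,e_3,\alpha,\gamma,\delta,\delta\gamma$). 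Your output would be the $3$-cycle quiver $1\to 2\to 3\to 1$ with zero potential and no relations, whose completed path algebra is infinite-dimensional. The splitting fails because any right equivalence must send $\beta$ into $\widehat{kF}$, hence here to a scalar multiple of $\beta$, and then the length-$3$ cycle $\delta\gamma\alpha$, which contains no $\beta$, cannot be cancelled: $\varphi(\alpha)\beta$ contributes only cycles through $\beta$, and terms $a\varphi(p_1)$ with $a\in\mathfrak{m}^{2}$ contribute only cycles of length at least $4$.

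The correct argument keeps the tails. One deletes only the frozen arrow $\beta_i$, freezes $\alpha_i$, and sets $W_{red}=\sum_{i>M}\alpha_ip_i+W_1$; the new frozen arrow is \emph{not} isolated from the potential. The relation $\partial_{\alpha_i}\widetilde W=\beta_i+\partial_{\alpha_i}W_{red}$ is used to eliminate the generator $\beta_i$, which by properties (iii) and (iv) of the normal form occurs nowhere else in $\widetilde W$; the remaining relations $\partial_\gamma\widetilde W$ for unfrozen $\gamma\neq\alpha_i$ then coincide with $\partial_\gamma W_{red}$, and no relation $\partial_{\alpha_i}W_{red}$ is imposed in the reduced algebra precisely because $\alpha_i$ has been frozen. (This is exactly what the homotopy $\varphi(\beta_i)=-\partial_{\alpha_i}W_{red}$, $h(\beta_i)=\alpha_i^{\vee}$ in the paper's subsequent lemma implements at the dg level.) In the example this yields $W_{red}=\delta\gamma\alpha$ with $\alpha$ frozen and relations $\alpha\delta,\gamma\alpha$, recovering the correct $7$-dimensional algebra. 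The remainder of your proposal --- the reduction to an irredundant potential, the verbatim DWZ treatment of the fully unfrozen $2$-cycles, and the direction of the surviving frozen arrow --- is sound.
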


	\begin{Prop}\cite[Proposition 3.15]{presslandMutationFrozenJacobian2020}\label{Prop:rduction up to right equ}
		Let $ (Q,F,W) $ be an irredundant ice quiver with potential. Then the ice quiver with potential $ (Q_{red} , F_{red},W_{red}) $ from Theorem~\ref{Thm:reduction} is uniquely determined up to right equivalence by the right equivalence class of $ (Q,F,W) $.
	\end{Prop}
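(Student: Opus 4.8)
The plan is to deduce the statement from an ice-quiver version of the Splitting Theorem of Derksen--Weyman--Zelevinsky \cite[Theorem~4.6]{derksenQuiversPotentialsTheir2008}: every irredundant ice quiver with potential $(Q,F,W)$ is right equivalent to a direct sum $(Q_{triv},F_{triv},W_{triv})\oplus(Q_{red},F_{red},W_{red})$ of a trivial ice quiver with potential and a reduced one, and each summand is determined up to right equivalence by the right equivalence class of $(Q,F,W)$. Granting this, the Proposition is immediate: a right equivalence $(Q,F,W)\to(\widetilde Q,\widetilde F,\widetilde W)$ must match the trivial summand with the trivial summand and the reduced summand with the reduced summand, and by Theorem~\ref{Thm:reduction} the reduced summand represents the right equivalence class of $(Q_{red},F_{red},W_{red})$.

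For the \emph{existence} of the splitting I would start from the normal form $\widetilde W$ of Lemma~\ref{Lem:Normal form of potential}. The substitution $\beta_i\mapsto\beta_i-p_i$ for $M+1\le i\le N$ turns each half-frozen term $\alpha_i\beta_i+\alpha_ip_i$ into a bare $2$-cycle $\alpha_i\beta_i$; then, exactly as in the classical case, for each $2$-cycle one constructs a continuous automorphism of $\widehat{kQ}$ fixing $S$ and stabilizing $\widehat{kF}$ which absorbs every cyclic word factoring through $\alpha_i$ or $\beta_i$ into the $2$-cycle-free term $W_1$. Iterating and passing to the limit — legitimate because $\widehat{kQ}$ is complete for the arrow-ideal topology — produces a right equivalence onto the claimed direct sum, the trivial summand being carried by the (possibly frozen) $2$-cycle arrows.

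For \emph{uniqueness} I would use the right equivalence invariant supplied by the degree-$2$ part $W^{(2)}$ of the potential, i.e.\ the bilinear form pairing, for each ordered pair of vertices, the arrows in one direction against the arrows in the other. Any right equivalence $\varphi$ is of the form $\varphi^{(1)}+(\text{higher order})$ with $\varphi^{(1)}$ an $S$-linear automorphism of $kQ_1$ preserving $kF_1$, and it acts on $W^{(2)}$ through $\varphi^{(1)}$; hence the rank of $W^{(2)}$, refined to keep track of the frozen arrows involved, is preserved. Since a reduced ice quiver with potential has $W^{(2)}_{red}=0$ (irredundancy together with admissibility of the Jacobian ideal forbids degree-$2$ terms) while a trivial one is determined up to right equivalence by its collection of $2$-cycles with their frozenness, this rank data pins down $(Q_{triv},F_{triv})$, hence $(Q_{red},F_{red})$, combinatorially. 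An induction improving a right equivalence order by order, in the style of \cite{derksenQuiversPotentialsTheir2008} but carried out within continuous $\widehat{kF}$-stabilizing automorphisms and again using completeness to form limits, then upgrades any right equivalence between two decompositions to a direct sum of a right equivalence of the trivial parts and a right equivalence of the reduced parts.

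The hard part will be the bookkeeping forced by the frozen structure: in contrast to \cite{derksenQuiversPotentialsTheir2008}, a half-frozen $2$-cycle cannot be deleted but must be converted into a frozen arrow, every change of variables has to remain inside the restricted class of automorphisms fixing $S$ and preserving $\widehat{kF}$ (i.e.\ be a right equivalence in the sense recalled above), and the rank invariant of $W^{(2)}$ must be adapted so that it still sees the distinction between frozen and unfrozen $2$-cycle arrows. Checking that the classical splitting and uniqueness machinery goes through under these constraints is the genuine content; everything else is routine, and for the purposes of this paper one may simply invoke \cite[Proposition~3.15]{presslandMutationFrozenJacobian2020}.
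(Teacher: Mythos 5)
First, a point of reference: the paper does not prove this statement at all — it is imported verbatim from Pressland, and the only ``proof'' on offer is the citation \cite[Proposition 3.15]{presslandMutationFrozenJacobian2020}. Your proposal therefore supplies an argument where the paper supplies none; the correct benchmark is Pressland's own proof, which does adapt the Derksen--Weyman--Zelevinsky machinery, so your general orientation is reasonable.

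Nevertheless there is a genuine gap, and it sits exactly in the step you defer to ``bookkeeping''. The splitting statement you want to invoke does not make sense as formulated in the ice setting. In the normal form of Lemma~\ref{Lem:Normal form of potential}, the half-frozen terms $\alpha_{i}\beta_{i}+\alpha_{i}p_{i}$ (for $M<i\leqslant N$) have $\alpha_{i}$ unfrozen and $\beta_{i}$ frozen; the reduction deletes $\beta_{i}$ and \emph{freezes} $\alpha_{i}$, so $(Q_{red},F_{red},W_{red})$ lives over a different frozen subquiver than $(Q,F,W)$, the arrow $\alpha_{i}$ ends up in the reduced part while $\beta_{i}$ and the $2$-cycle $\alpha_{i}\beta_{i}$ would lie in the trivial part, and consequently $W$ does not split as $W_{triv}+W_{red}$ with each summand supported on its own arrow set. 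Relatedly, your substitution $\beta_{i}\mapsto\beta_{i}-p_{i}$ is not a right equivalence: $\beta_{i}$ is frozen and $p_{i}\in\mathfrak{m}^{2}$ is an arbitrary path, so the image of $\widehat{kF}$ leaves $\widehat{kF}$, violating condition (2) of the definition of right equivalence (in \cite{derksenQuiversPotentialsTheir2008} one absorbs $p_{i}$ by changing a variable that here is exactly the one you are not allowed to move). These two points are precisely why Pressland cannot simply cite the DWZ Splitting Theorem and must rework both existence and uniqueness; your plan acknowledges the difficulty, but the proposed route — state the splitting, then deduce the proposition formally — fails at its first step unless the splitting is reformulated, for instance as the statement that the reduction \emph{operation} descends to right equivalence classes, which is what Pressland actually proves. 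Your rank-of-$W^{(2)}$ invariant for uniqueness is sound as far as it goes (a reduced ice quiver with potential does have vanishing degree-$2$ part, and the degree-$1$ part of a right equivalence preserves $kF_{1}$), but it only controls the combinatorics of the $2$-cycles, not the order-by-order correction of the potential, which is where the substance of the uniqueness proof lies.
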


	\begin{Def}\rm
		Let $ (Q,F,W) $ is an irredundant ice quiver with potential. We call $ (Q_{red} , F_{red},W_{red}) $ from Theorem~\ref{Thm:reduction} the \emph{reduction} of $ (Q,F,W) $.
	\end{Def}

	\subsection{Algebraic mutations}
	Let $ (Q,F,W) $ be an ice quiver with an irredundant potential. Let $ v $ be an unfrozen vertex of $ Q $ such that no loops or 2-cycles of $ Q $ are incident with $ v $.
	
	\begin{Def}\rm\cite[Definition 4.1]{presslandMutationFrozenJacobian2020}\label{Def:Algebraic mutation}
		The ice quiver with potential $ \tilde{\mu}_{v}(Q,F,W) $, called the \emph{pre-mutation} of $ (Q,F,W) $ at $ v $, is the output of the following procedure.
		\begin{itemize}
			\item[(1)] For each pair of arrows $ \alpha:u\to v $ and $ \beta:v\to w $, add an unfrozen ‘composite' arrow $ [\beta\alpha]:u\to w $ to $ Q $.
			\item[(2)] Reverse each arrow incident with $ v $.
			
			\item[(3)] Pick a representative $ \widetilde{W} $ of $ W $ in $ kQ $ such that no term of $ W $ begins at $ v $ (which is possible since there are no loops at $ v $). For each pair of arrows $ \alpha,\beta $ as in $ (1) $, replace each occurrence of $ \beta\alpha $ in $ \widetilde{W} $ by $ [\beta\alpha] $, and add the term $ [\beta\alpha]\alpha^{*}\beta^{*} $.
			%
		\end{itemize}
		Let us write $ (Q',F',W') $ for $ \tilde{\mu}_{v}(Q,F,W) $. It is clear that $ F' $ equals $ F $ and the new potential $ W' $ is also irredundant, since the arrows $ [\beta\alpha] $ are unfrozen, but it need not be reduced even if $ (Q,F,W) $ is. We define $ \mu_{v}(Q,F,W) $ as replace the resulting ice quiver with potential $ \tilde{\mu}_{v}(Q,F,W) $ by its reduction, as in Theorem~\ref{Thm:reduction}, this being unique up to right equivalence by Proposition~\ref{Prop:rduction up to right equ}. We call $ \mu_{v} $ the \emph{mutation} at the vertex $ v $. 
	\end{Def}
	
	\begin{Thm}
		\begin{itemize}
			\item[a)] \cite[Proposition 4.2]{presslandMutationFrozenJacobian2020} The right equivalence class of $ \mu_{v}(Q,F,W) $ is determined by the right equivalence class of $ (Q,F,W) $.
			\item[b)]\cite[Theorem 4.3]{presslandMutationFrozenJacobian2020} Let $ (Q,F,W) $ be a reduced ice quiver with potential and $ v\in Q_{0}\setminus F_{0} $ an unfrozen vertex. Then $ \mu^{2}_{v}(Q, F, W) $ is right equivalent to $ (Q,F,W) $.
		\end{itemize}
	\end{Thm}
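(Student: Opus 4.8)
This is Pressland's theorem, and the natural approach is to transplant the corresponding arguments of Derksen--Weyman--Zelevinsky~\cite{derksenQuiversPotentialsTheir2008} from quivers with potential to ice quivers with potential. The essential simplification in the ice case is that, $v$ being unfrozen and every arrow of $F$ having both endpoints in $F_{0}$, no arrow of $F$ is incident with $v$; hence $\tilde{\mu}_{v}$ creates, reverses and cancels only \emph{unfrozen} arrows, the frozen subquiver is left literally unchanged, and any automorphism of a completed path algebra that we build by altering only the unfrozen arrows at $v$ and the new composite arrows will automatically fix $S$ and preserve $\widehat{kF}$. For part (a) it is therefore enough to prove that $\tilde{\mu}_{v}$ takes right equivalent ice quivers with potential to right equivalent ones: since $\mu_{v}=\mathrm{red}\circ\tilde{\mu}_{v}$ and pre-mutation preserves irredundancy, Proposition~\ref{Prop:rduction up to right equ} then gives the statement for $\mu_{v}$.

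To prove that claim, let $\varphi\colon\widehat{kQ}\to\widehat{kQ}$ be a right equivalence with $\varphi(W)=W'$. One first normalizes $\varphi$: composing it with a ``linear'' automorphism (acting on arrows through an invertible $S$-bimodule map), which passes through $\tilde{\mu}_{v}$ without trouble because it acts independently on the arrows at $v$ and on the composite arrows, reduces to the case where $\varphi$ is unitriangular, i.e.\ $\varphi(a)\equiv a$ modulo the square of the arrow ideal for every arrow $a$. Then, following the construction in the proof of \cite[Proposition 4.6]{derksenQuiversPotentialsTheir2008}, one extends $\varphi$ to an automorphism $\bar\varphi$ of $\widehat{kQ'}$ by declaring it to coincide with $\varphi$ on the arrows not incident with $v$ and by prescribing its (higher-order) values on the reversed arrows $\alpha^{*},\beta^{*}$ and on the composite arrows $[\beta\alpha]$ in the unique way compatible with the cubic terms $[\beta\alpha]\alpha^{*}\beta^{*}$ of the pre-mutated potential; unitriangularity and continuity make this extension well defined. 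A direct computation in $H\!H_{0}$ then shows $\bar\varphi(\tilde{\mu}_{v}(W))=\tilde{\mu}_{v}(W')$, and $\bar\varphi$ fixes $S$ and preserves $\widehat{kF}$ by the remark above, so it is a right equivalence $\tilde{\mu}_{v}(Q,F,W)\to\tilde{\mu}_{v}(Q,F,W')$.

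For part (b) the plan is the one behind DWZ's proof that mutation of quivers with potential is an involution. Write $\tilde{\mu}_{v}(Q,F,W)=(\bar Q,F,\bar W)$ and compute $\tilde{\mu}_{v}^{2}(Q,F,W)$ explicitly: with $a_{i}\colon u_{i}\to v$ and $b_{j}\colon v\to w_{j}$ running over the arrows of $Q$ at $v$, the quiver of $\tilde{\mu}_{v}^{2}(Q,F,W)$ consists of the arrows of $Q$ (the twice-reversed arrows $a_{i}^{**},b_{j}^{**}$ at $v$, unchanged elsewhere), the composites $[b_{j}a_{i}]$ and the double composites $[a_{i}^{*}b_{j}^{*}]$, and its potential has the shape
\[
W^{\sharp}+\sum_{i,j}[b_{j}a_{i}]\,[a_{i}^{*}b_{j}^{*}]+\sum_{i,j}[a_{i}^{*}b_{j}^{*}]\,b_{j}^{**}a_{i}^{**},
\]
where $W^{\sharp}$ is a representative of $W$ in which each length-two subpath through $v$ has been replaced by the corresponding composite. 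Using the cyclic derivative with respect to $[a_{i}^{*}b_{j}^{*}]$, which equals $[b_{j}a_{i}]+b_{j}^{**}a_{i}^{**}$, one writes down a unitriangular right equivalence that absorbs the correction $b_{j}^{**}a_{i}^{**}$ into the composite $[b_{j}a_{i}]$ and thereby exhibits $\tilde{\mu}_{v}^{2}(Q,F,W)$ as right equivalent to the direct sum of an ice quiver with potential right equivalent to $(Q,F,W)$ and a trivial ice quiver with potential supported on the $2$-cycles formed by the (corrected) composites and the double composites. Since $(Q,F,W)$ is reduced, Theorem~\ref{Thm:reduction} together with Proposition~\ref{Prop:rduction up to right equ} identifies the reduction of $\tilde{\mu}_{v}^{2}(Q,F,W)$ with $(Q,F,W)$; and a further lemma---that the trivial summands split off when one reduces after the first mutation do not affect the second application of $\tilde{\mu}_{v}$ at $v$, so that $\mathrm{red}\circ\tilde{\mu}_{v}\circ\mathrm{red}\circ\tilde{\mu}_{v}$ agrees up to right equivalence with $\mathrm{red}\circ\tilde{\mu}_{v}^{2}$---completes the argument.

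The principal obstacle in both parts is the same and entirely algebraic: producing the explicit unitriangular automorphisms of the completed path algebras and verifying their action on the potentials. In (a) this is the intertwining property of $\bar\varphi$ with the cubic terms $[\beta\alpha]\alpha^{*}\beta^{*}$, together with convergence of the infinite substitutions; in (b) it is the verification that cancelling the composite arrows leaves behind \emph{exactly} a trivial ice quiver with potential and nothing further. The additional bookkeeping caused by frozen and half-frozen $2$-cycles when passing to reductions is wholly contained in Lemma~\ref{Lem:Normal form of potential} and Theorem~\ref{Thm:reduction}, which may be cited as they stand.
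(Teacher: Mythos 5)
The paper does not actually prove this statement: both parts are quoted from Pressland, with a) cited as Proposition 4.2 and b) as Theorem 4.3 of \cite{presslandMutationFrozenJacobian2020}, so there is no internal argument to compare yours against. Your outline is a faithful reconstruction of the strategy Pressland does use, namely transplanting the Derksen--Weyman--Zelevinsky arguments, and your key structural observation is the right one: since $v$ is unfrozen and every arrow of $F$ has both endpoints in $F_{0}$, no frozen arrow is incident with $v$, so pre-mutation only creates, reverses and cancels unfrozen arrows and the automorphisms one constructs automatically satisfy the extra conditions $\varphi|_{S}=\id_{S}$ and $\varphi(\widehat{kF})=\widehat{kF'}$ in the definition of right equivalence. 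The reduction of a) to the corresponding statement for $\tilde{\mu}_{v}$ via Proposition~\ref{Prop:rduction up to right equ} (using that pre-mutation preserves irredundancy), and the shape you give for the twice-pre-mutated potential in b), are both correct.

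That said, what you have is a plan rather than a proof: the actual mathematical content in both parts is the explicit construction of the unitriangular automorphisms and the verification of their effect on the potentials in $H\!H_{0}$, and these are asserted ("a direct computation then shows", "one writes down a right equivalence that absorbs the correction") rather than carried out; the same goes for the splitting lemma needed to compare $\mathrm{red}\circ\tilde{\mu}_{v}\circ\mathrm{red}\circ\tilde{\mu}_{v}$ with $\mathrm{red}\circ\tilde{\mu}_{v}^{2}$. One genuinely ice-specific point in b) is also passed over: step (4) of the reduction procedure can reverse the direction of frozen arrows (this is exactly why part d) of Theorem~\ref{Thm:equivalence non-frozen} needs the canonical identification of $\bm\Pi_{2}(F)$ with $\bm\Pi_{2}(F'')$), so to conclude that $\mu_{v}^{2}(Q,F,W)$ is right equivalent to $(Q,F,W)$ one must check that the frozen subquiver returns to $F$ after the two reductions, not merely that the unfrozen data does; this is part of what Pressland's Theorem 4.3 verifies and is not "wholly contained" in Lemma~\ref{Lem:Normal form of potential} and Theorem~\ref{Thm:reduction} as you claim. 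As a sketch of the intended (external) proof, however, your proposal is accurate.
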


	\begin{Ex}
		Consider the following ice quiver with potential $ (Q,F,W) $
		\[
		\begin{tikzcd}[row sep=1ex]
			&\color{blue}\boxed{1}\arrow[ddr,swap,"\alpha"]&&\color{blue}\boxed{3}\arrow[ll,dashed,swap,blue,"\gamma"]\\
			Q=\\
			&&2\arrow[uur,swap,"\beta"]&,
		\end{tikzcd}
		\]
		where $ F $ is the full subquiver on $ \{1,3\}\subseteq Q_{0} $ and the potential $ W=\gamma\beta\alpha $. Then the pre-mutation of $ (Q,F,W) $ at vertex $ 2 $ is given by the following ice quiver with potential
		\[
		\begin{tikzcd}[row sep=1ex]
			&\color{blue}\boxed{1}\arrow[rr,"{[\beta\alpha]}",bend left=20]&&\color{blue}\boxed{3}\arrow[ll,dashed,blue,"\gamma",bend left=20]\arrow[ddl,"\beta^{*}"]\\
			\tilde{\mu}_{2}(Q)=\\
			&&2\arrow[uul,"\alpha^{*}"]&,
		\end{tikzcd}
		\]
		where $ F $ is the subquiver with vertex set $ F_{0}=\{1,2\} $ and arrow set $ F_{1}=\{\gamma\} $ and the new potential is $ \tilde{\mu}_{2}(W)=\alpha^{*}\beta^{*}[\beta\alpha]+\gamma[\beta\alpha] $. This ice quiver with potential is not reduced. The mutation $ \mu_{2}(Q,F,W) $ is given by its reduction, which is the ice quiver
		\[
		\begin{tikzcd}[row sep=1ex]
			&\color{blue}\boxed{1}\arrow[rr,dashed,blue,"{[\beta\alpha]}"]&&\color{blue}\boxed{3}\arrow[ddl,"\beta^{*}"]\\
			(\mu_{2}(Q),\mu_{2}(F))=\\
			&&2\arrow[uul,"\alpha^{*}"]&
		\end{tikzcd}
		\]
		with potential $ \mu_{2}W=\beta^{*}[\beta\alpha]\alpha^{*} $.
	\end{Ex}
	
	\begin{Thm}\cite[Proposition 4.6]{presslandMutationFrozenJacobian2020}
		Let $ (Q,F,W) $ be an ice quiver with an irredundant potential and $ v $ an unfrozen vertex. If $ (\mu_{v}Q,\mu_{v}F) $ has no $ 2 $-cycles containing unfrozen arrows, then the underlying ice quiver of $ \mu_{v}(Q,F,W) $ agrees with $ \mu^{P}_{v}(Q,F) $ defined in Definition~\ref{Def:Combinatorial mutations}.
	\end{Thm}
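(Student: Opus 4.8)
The plan is to compare $\mu_v(Q,F,W)$ with $\mu^P_v(Q,F)$ through the factorisation ``pre-mutation, then reduction'' of Definition~\ref{Def:Algebraic mutation}, matching the pre-mutation with steps~(1)--(2) of Definition~\ref{Def:Combinatorial mutations} and the reduction with steps~(3)--(4). Steps~(1) and~(2) of the combinatorial mutation are, by direct inspection, exactly steps~(1) and~(2) of the pre-mutation: one adjoins the composite arrows $[\beta\alpha]$ and reverses every arrow at $v$. Since $v\notin F_0$ and $F$ is a subquiver of $Q$, no frozen arrow is incident with $v$, so these two steps leave the frozen subquiver unchanged; hence the underlying ice quiver of $\tilde\mu_v(Q,F,W)=(Q',F,W')$ coincides with the output of steps~(1)--(2) of $\mu^P_v$ applied to $(Q,F)$. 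It then remains to show that passing from $(Q',F,W')$ to its reduction $\mu_v(Q,F,W)$ (Theorem~\ref{Thm:reduction}, Proposition~\ref{Prop:rduction up to right equ}) has, on underlying ice quivers, the same effect as steps~(3)--(4) applied to $(Q',F)$.

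To study the reduction I would bring the irredundant potential $W'$ into the normal form of Lemma~\ref{Lem:Normal form of potential},
$$ \widetilde{W'} \;=\; \sum_{i=1}^{M}\alpha_i\beta_i \;+\; \sum_{i=M+1}^{N}(\alpha_i\beta_i + \alpha_i p_i) \;+\; W_1, $$
in which the $\alpha_i\beta_i$ with $i\leqslant M$ are unfrozen $2$-cycles, the $\alpha_i\beta_i$ with $M<i\leqslant N$ are half-frozen $2$-cycles (with $\alpha_i$ unfrozen, $\beta_i$ frozen and each $\beta_i$ absent from the $p_j$), and $W_1$ contains no $2$-cycle; no frozen--frozen $2$-cycle occurs in $W'$ because the potential is irredundant. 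Then I would carry out the reduction by the splitting procedure behind Theorem~\ref{Thm:reduction}: the summand $\sum_{i\leqslant M}\alpha_i\beta_i$, together with its arrows, is a trivial ice quiver with potential whose removal deletes exactly these $M$ unfrozen $2$-cycles — this is step~(3); and for $M<i\leqslant N$, using that the frozen $\beta_i$ do not occur in the $p_j$, the contributions $\alpha_i\beta_i+\alpha_i p_i$ reduce away, leaving in place of each half-frozen $2$-cycle a single \emph{frozen} arrow in the direction of the unfrozen arrow $\alpha_i$ — this is step~(4) (and is what makes mutation at an unfrozen vertex reverse certain frozen arrows). Hence, up to right equivalence and therefore up to ice-quiver isomorphism, the underlying ice quiver of $\mu_v(Q,F,W)$ arises from $(Q',F)$ by deleting unfrozen $2$-cycles and replacing half-frozen $2$-cycles by frozen arrows, as in steps~(3)--(4).

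To conclude I would use the hypothesis to fix the non-canonical choices in steps~(3)--(4). Since $(\mu_v Q,\mu_v F)$ has no $2$-cycle through an unfrozen arrow, the reduction must destroy every unfrozen and every half-frozen $2$-cycle of $Q'$; together with the normal form, this shows that passing to the reduction has, up to ice-quiver isomorphism, the same net effect as steps~(3)--(4) performed with \emph{maximal} collections. On the other hand, between any two vertices the removal of a maximal collection of unfrozen $2$-cycles deletes as many arrows as possible in each direction, and likewise for half-frozen $2$-cycles, so the ice quiver produced by steps~(3)--(4) is independent of the maximal collections chosen (the frozen--frozen $2$-cycles, which neither construction alters, cause no difficulty). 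Since $\mu^P_v(Q,F)$ is, by definition, this ice quiver, it coincides with the underlying ice quiver of $\mu_v(Q,F,W)$.

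The step I expect to be the main obstacle is the reduction in the second paragraph — in particular checking that the splitting of Theorem~\ref{Thm:reduction} leaves, for each half-frozen $2$-cycle, a \emph{frozen} arrow in the direction of the unfrozen arrow (rather than, say, the unfrozen arrow itself), and more generally controlling the directions and frozenness of the arrows that survive the changes of variables. This is exactly the bookkeeping that Lemma~\ref{Lem:Normal form of potential} and Theorem~\ref{Thm:reduction} are designed to make manageable; it is the pseudocompact, ``relative'' analogue of the splitting theorem of Derksen--Weyman--Zelevinsky.
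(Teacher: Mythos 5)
This statement is quoted from Pressland \cite[Proposition 4.6]{presslandMutationFrozenJacobian2020} and the paper gives no proof of its own, so the only comparison available is with the mechanism the paper deploys in its neighbouring reduction lemma, which is exactly what you invoke: the normal form of the irredundant potential (Lemma~\ref{Lem:Normal form of potential}), splitting off the trivial part carrying the unfrozen $2$-cycles, and, for each half-frozen $2$-cycle $\alpha_i\beta_i$, deleting the frozen arrow $\beta_i$ and freezing $\alpha_i$, so that a frozen arrow survives in the direction of the unfrozen one. Your proposal is correct and follows essentially this (Pressland's) route, with the hypothesis on $(\mu_vQ,\mu_vF)$ used, as it must be, to force the reduction to realise maximal collections in steps (3)--(4).
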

	
	\subsection{The complete relative Ginzburg algebra and the Ginzburg functor}
	\begin{Def}\rm\label{Def:Relative Ginzburg algebra}
		Let $ (Q,F,W) $ be a finite ice quiver with potential. Let $ \widetilde{Q} $ be the graded quiver with the same vertices as $ Q $ and whose arrows are
		\begin{itemize}
			\item the arrows of $ Q $,
			\item an arrow $ a^{\vee}:j\to i $ of degree $ -1 $ for each unfrozen arrow $ a $,
			\item a loop $ t_{i}:i\to i $ of degree $ -2 $ for each unfrozen vertex $ i $.
		\end{itemize}
		
		Define the \emph{complete relative Ginzburg dg algebra} $ \bm\Gamma_{rel}(Q,F,W) $ as the dg algebra whose underlying graded space is the completed graded path algebra $ \widehat{k\widetilde{Q}} $. Its differential is the unique $ k $-linear continuous endomorphism of degree 1 which satisfies the Leibniz rule
		\begin{align*}
			\xymatrix{
				d(u\circ v)=d(u)\circ v+(-1)^{p}u\circ d(v)
			}
		\end{align*}
		for all homogeneous $ u $ of degree $ p $ and all $ v $ and takes the following values on the arrows of $ \widetilde{Q} $:
		\begin{itemize}
			\item $ d(a)=0 $ for each arrow $ a $ of $ Q $,
			\item $d(a^{\vee})=\partial_{a}W$ for each unfrozen arrow $ a $,
			\item $ d(t_{i})=e_{i}(\sum_{a\in Q_{1}}[a,a^{\vee}])e_{i} $ for each unfrozen vertex $ i $, where $ e_{i} $ is the lazy path corresponding to the vertex $ i $.
		\end{itemize}
	\end{Def}

	\begin{Def}\rm\label{Def: derived preprojective algebra}
		Let $ F $ be any finite quiver. Let $  \widetilde{F} $ be the graded quiver with the same vertices as $ F $ and whose arrows are
		\begin{itemize}
			\item the arrows of $ F $,
			\item an arrow $ \tilde{a}:j\to i $ of degree $ 0 $ for each arrow $ a $ of $ F $,
			\item a loop $ r_{i}:i\to i $ of degree $ -1 $ for each vertex $ i $ of $ F $.
		\end{itemize}
		Define \emph{complete derived preprojective algebra} $ \bm\Pi_{2}(F) $ as the dg algebra whose underlying graded space is the completed graded path algebra $ \widehat{k\widetilde{F}} $. Its differential is the unique $ k $-linear continuous endomorphism of degree 1 which satisfies the Leibniz rule
		\begin{align*}
			\xymatrix{
				d(u\circ v)=d(u)\circ v+(-1)^{p}u\circ d(v)
			}
		\end{align*}
		for all homogeneous $ u $ of degree $ p $ and all $ v $, and takes the following values on the arrows of $ \widetilde{F} $:
		\begin{itemize}
			\item $ d(a)=0 $ for each arrow $ a $ of $ F $,
			\item $d(\tilde{a})=0 $ for each arrow $ a $ in $ F $,
			\item $ d(r_{i})=e_{i}(\sum_{a\in F_{1}}[a,\tilde{a}])e_{i} $ for each vertex $ i $ of $ F $, where $ e_{i} $ is the lazy path corresponding to the vertex $ i $.
		\end{itemize}
	\end{Def}
	
	\bigskip
	
	Let $ (Q,F,W) $ be a finite ice quiver with potential. Since $ W $ can be viewed as an element in $ HC_{0}(\widehat{kQ}) $, $ c=B(W) $ is an element in $ HH_{1}(\widehat{kQ}) $, where $ B $ is the Connes connecting map (see~\cite[Section 6.1]{kellerDeformedCalabiYau2011}). Let $ G:\widehat{kF}\hookrightarrow\widehat{kQ} $  be the canonical dg inclusion. Then $ \xi=(0,c) $ is an element of $ H\!H_{0}(G) $. 
	
	Via the deformed relative 3-Calabi--Yau completion of $ G $ with respect to the class $ \xi $, by Theorem~\ref{Thm:Relative CY completion to left steucture}, we get a dg functor
	$$ \bm \tilde{G}:\bm\Pi_{2}(F)\ra\bm\Pi_{3}(\widehat{kQ},\widehat{kF},\xi) ,$$ 
	which has a canonical left $ 3 $-Calabi--Yau structure. 

\begin{Prop}\cite[Proposition 3.18]{wuyilinHiggs}\label{Prop: reduce CY}
	We have the following commutative diagram (up to homotopy) in the category of pseudocompact dg algebras
	\[
	\begin{tikzcd}
		\bm\Pi_{2}(F)\arrow[r,"\tilde{G}"]\arrow[dr,"\bm G_{rel}",swap]&\bm\Pi_{3}(\widehat{kQ},\widehat{kF},\xi)\\
		&\bm{\Gamma}_{rel}(Q,F,W)\arrow[u,hook,"i"]
	\end{tikzcd}
	\]
	Where $ i $ is a quasi-equivalent dg inclusion and $ \bm G_{rel} $ is given explicitly as follows:
	\begin{itemize}
		\item $ \bm G_{rel}(i)=i $ for each frozen vertex $ i\in F_{0} $,
		\item $ \bm G_{rel}(a)=a $ for each arrow $ a\in F_{1} $,
		\item $ \bm G_{rel}(\tilde{a})=-\partial_{a}W $ for each arrow $ a\in F_{1} $,
		\item $ \bm G_{rel}(r_{i})=e_{i}(\sum_{a\in Q_{1}\setminus F_{1}}[a,a^{\vee}])e_{i} $ for each frozen vertex $ i\in F_{0} $.
	\end{itemize}
Thus, the functor $ \bm G_{rel}: \bm\Pi_{2}(F)\ra \bm{\Gamma}_{rel}(Q,F,W) $ has a canonical left $ 3 $-Calabi--Yau structure.
\end{Prop}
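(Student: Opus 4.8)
The statement bundles together three things: the identification of the deformed relative $3$-Calabi--Yau completion $\bm\Pi_{3}(\widehat{kQ},\widehat{kF},\xi)$ with the explicit algebra $\bm\Gamma_{rel}(Q,F,W)$ (via the inclusion $i$), the resulting formula for $\bm G_{rel}$, and the transfer of the canonical left $3$-Calabi--Yau structure; I would establish them in that order. For the identification, the plan is to unwind Yeung's construction (Definition~\ref{Def:deformed CY completion}) for the dg inclusion $G\colon\widehat{kF}\hookrightarrow\widehat{kQ}$ and the class $\xi=(0,c)$, $c=B(W)$, using the standard pseudocompact bimodule resolutions
$$0\longrightarrow\widehat{kF}\,\widehat{\otimes}_{S}\,kF_{1}\,\widehat{\otimes}_{S}\,\widehat{kF}\xrightarrow{\;d_{1}\;}\widehat{kF}\,\widehat{\otimes}_{S}\,\widehat{kF}\longrightarrow\widehat{kF}\longrightarrow 0$$
and its analogue over $\widehat{kQ}^{e}$. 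Dualizing these produces explicit two-term models for $\widehat{kF}^{\vee}$, $\widehat{kQ}^{\vee}$ and for $m_{G}^{\vee}\colon\widehat{kQ}^{\vee}\to\widehat{kF}^{\vee}\,\widehat{\otimes}_{\widehat{kF}^{e}}\,\widehat{kQ}^{e}$; computing $\Xi=\cof(m_{G}^{\vee})$, the ``frozen'' contributions of $\widehat{kQ}^{\vee}$ are cancelled against $\mathbf{L}G^{*}(\widehat{kF}^{\vee})$, leaving $\Sigma^{3-2}\Xi=\Sigma\Xi$ quasi-isomorphic to the pseudocompact $\widehat{kQ}$-bimodule whose completed tensor algebra is $\widehat{k\widetilde{Q}}$ --- concretely, the new generators are a degree $-1$ arrow $a^{\vee}\colon t(a)\to s(a)$ for each unfrozen arrow $a$ and a degree $-2$ loop $t_{i}$ at each unfrozen vertex $i$, with internal differential realizing the ``moment-map'' element $e_{i}(\sum_{a}[a,a^{\vee}])e_{i}$. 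On the source side one reads off from the $\widehat{kF}$-resolution that $\bm\Pi_{2}(F)=\mathbf{L}T_{\widehat{kF}}(\Sigma\widehat{kF}^{\vee})$ is modelled by the completed path algebra on $\widetilde{F}$. Choosing the cofibrant replacement $\mathbf{p}\Xi$ so that it retracts, as a pseudocompact dg bimodule, onto this minimal model, $\mathbf{L}T_{\widehat{kQ}}(\Sigma\mathbf{p}\Xi)$ acquires a dg inclusion $i$ from $\widehat{k\widetilde{Q}}$.

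Next I would switch on the deformation. Because $\xi=(0,c)$ has vanishing $\widehat{kF}$-component, $\xi_{B}=0$, so the source $\bm\Pi_{2}(F,\xi_{B})=\bm\Pi_{2}(F)$ is undeformed; on the target, adding $\xi\colon\Sigma\Xi\to\Sigma\widehat{kQ}$ to the differential affects only the degree $-1$ generators, and under the isomorphism $\Hom_{\cd_{pc}(\widehat{kQ}^{e})}(\Sigma\Xi,\Sigma\widehat{kQ})\cong H\!H_{1}(G)$ together with the description of the Connes class $c=B(W)$ by cyclic derivatives, this perturbation is exactly $d(a^{\vee})=\partial_{a}W$; the undeformed internal differential survives as $d(t_{i})=e_{i}(\sum_{a\in Q_{1}}[a,a^{\vee}])e_{i}$. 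This is precisely the differential of $\bm\Gamma_{rel}(Q,F,W)$, so $i$ becomes a dg inclusion $\bm\Gamma_{rel}(Q,F,W)\hookrightarrow\bm\Pi_{3}(\widehat{kQ},\widehat{kF},\xi)$; it is a quasi-isomorphism since the retraction onto the minimal model is compatible with the $W$-twisted differentials (homological perturbation), is filtered for the path-length filtration, and all algebras involved are complete for that filtration, so one may pass to completed tensor algebras. Tracing the structure morphism $\tilde{G}$ through $i$ then yields the stated formulas: vertices and frozen arrows go to themselves; the defining commutative square of Section~\ref{Section3} (with $\xi_{B}=0$) forces $\tilde a\mapsto-\partial_{a}W$; and $r_{i}$, which on the bimodule level is the degree-$(-1)$ moment-map generator at the frozen vertex $i$, maps to the unfrozen truncation $e_{i}(\sum_{a\in Q_{1}\setminus F_{1}}[a,a^{\vee}])e_{i}$ of the preprojective relation. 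Hence $\bm G_{rel}\simeq i^{-1}\circ\tilde{G}$ and the triangle commutes up to homotopy.

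For the last assertion I would invoke Theorem~\ref{Thm:Relative defomed CY completion has a canonical left CY}: the class $\xi=(0,B(W))$ admits the evident negative cyclic lift coming from the potential $W\in HC_{0}(\widehat{kQ})$ --- the relative incarnation of the standard fact that a class of the form $B(w)$ has a canonical negative cyclic lift --- so $\tilde{G}$ carries a canonical left $3$-Calabi--Yau structure, and transporting it along the quasi-isomorphism $i$ gives one on $\bm G_{rel}$.

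The step I expect to be the genuine obstacle is the pseudocompact minimal-model comparison underlying $i$: one has to check that the cofibrant resolution of the bimodule $\Xi$ can be taken compatibly with all the $\mathfrak{m}$-adic completions and that the homological-perturbation series converges there, so that $i$ is truly a quasi-isomorphism of pseudocompact dg algebras. The rest --- the sign conventions, the cyclic-derivative bookkeeping, and the connecting maps in the long exact sequences relating $H\!H_{\bullet}(G)$, $H\!H_{\bullet}(\widehat{kQ})$ and $H\!H_{\bullet}(\widehat{kF})$ --- is careful but essentially formal.
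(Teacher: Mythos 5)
Your proposal is correct and follows essentially the same route as the paper's source for this statement: the paper does not reprove the proposition but cites \cite[Proposition 3.18]{wuyilinHiggs} and remarks that the proof adapts to the pseudocompact setting, and that proof is exactly the computation you sketch --- dualizing the standard two-term bimodule resolutions of $\widehat{kF}$ and $\widehat{kQ}$ to identify $\Sigma\Xi$ with the bimodule generated by the $a^{\vee}$ and $t_{i}$, observing that $\xi_{B}=0$ so the source is undeformed while the deformation contributes $d(a^{\vee})=\partial_{a}W$, and invoking Theorem~\ref{Thm:Relative defomed CY completion has a canonical left CY} via the canonical negative cyclic lift of $B(W)$. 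The convergence issue you flag for the minimal-model retraction is precisely the point where the pseudocompact adaptation differs from the non-complete case, and it is handled by the path-length filtration argument you indicate.
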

\begin{Rem}
	In the pseudocompact setting, the situation is different from \cite[Proposition 3.18]{wuyilinHiggs}. But the proof of Proposition~\ref{Prop: reduce CY} can be adapted from the proof of \cite[Proposition 3.18]{wuyilinHiggs}.
\end{Rem}
We call $ \bm G_{rel}:\bm\Pi_{2}(F)\ra\bm{\Gamma}_{rel}(Q,F,W) $ the \emph{Ginzburg functor} associated with $ (Q,F,W) $. The following lemma is an easy consequence of the definition.
	\begin{Lem}
		Let $ (Q,F,W) $ be an ice quiver with potential. Then the Jacobian algebra $ J(Q,F,W) $ is the $ 0 $-th homology of the complete Ginzburg dg algebra $ \bm{\Gamma}_{rel}(Q,F,W) $, i.e.
		$$ J(Q,F,W)=H^{0}(\bm\Gamma_{rel}(Q,F,W)) .$$ Moreover, the complete preprojective algebra $ \widetilde{kF} $ is the $ 0 $-th homology of the complete derived preprojective algebra $ \bm\Pi_{2}(F) $, i.e.
		$$ \widetilde{kF}=H^{0}(\bm\Pi_{2}(F)) .$$
	\end{Lem}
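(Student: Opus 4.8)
The statement asserts two facts: $J(Q,F,W) = H^0(\bm\Gamma_{rel}(Q,F,W))$ and $\widetilde{kF} = H^0(\bm\Pi_2(F))$. Both are computations of zeroth homology of an explicitly presented connective dg algebra, so the plan is the same in each case: identify $H^0$ of a completed graded path algebra with a quotient of the degree-$0$ part of that algebra by the image of the differential coming from degree $-1$. First I would note that $\bm\Gamma_{rel}(Q,F,W)$ is concentrated in degrees $\leqslant 0$, so $H^0$ is the cokernel of $d\colon \bm\Gamma_{rel}^{-1}\to\bm\Gamma_{rel}^{0}$. The degree-$0$ part of $\widehat{k\widetilde Q}$ is exactly $\widehat{kQ}$, since the only arrows of $\widetilde Q$ in negative degree are the $a^\vee$ (degree $-1$) and $t_i$ (degree $-2$). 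So I must show that the image of $d$ in degree $0$ is precisely the closed two-sided ideal $\overline{\langle \partial_a W : a\in Q_1\setminus F_1\rangle}$.

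The key step is to describe $\bm\Gamma_{rel}^{-1}$ and the differential on it. A path of total degree $-1$ in $\widetilde Q$ is (a linear combination of) paths using exactly one arrow $a^\vee$ (for $a$ an unfrozen arrow of $Q$) and otherwise arrows of $Q$; write such a path as $p\, a^\vee q$ with $p,q$ paths in $Q$. Applying the Leibniz rule and $d(a^\vee)=\partial_a W$, $d(p\,a^\vee q)=\pm\, p\,(\partial_a W)\,q$, since $d$ kills the arrows of $Q$. As $p,q$ range over all paths in $Q$ (and one takes completions/infinite linear combinations), the image is exactly the closure of the two-sided ideal generated by the elements $\partial_a W$ for $a$ unfrozen — i.e. the relative Jacobian ideal. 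Hence $H^0(\bm\Gamma_{rel}(Q,F,W)) = \widehat{kQ}/\overline{\langle \partial_a W : a\in Q_1\setminus F_1\rangle} = J(Q,F,W)$. One should check that the degree-$0$ component of the image of $d$ receives no contribution from degree-$(-1)$ elements other than these — but there are none, since $d(t_i)$ has degree $-1$, not $0$, and there are no arrows of degree $+1$, so nothing else maps into degree $0$.

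The second identity is the same computation with $W=0$ (or rather: it is the special/absolute case). The degree-$0$ part of $\widehat{k\widetilde F}$ is the path algebra on the arrows $a$ and $\widetilde a$, i.e. the double quiver $\overline{F}$, and $H^0(\bm\Pi_2(F))$ is its quotient by the image of $d$ in degree $0$. Degree-$(-1)$ elements are spanned by paths using exactly one loop $r_i$; by Leibniz and $d(r_i)=e_i(\sum_{a\in F_1}[a,\widetilde a])e_i$, the image of $d$ is the closure of the two-sided ideal generated by the preprojective relations $\sum_{a}[a,\widetilde a]$ (summed over arrows, read off vertex by vertex via the $e_i$). Thus $H^0(\bm\Pi_2(F))$ is the complete preprojective algebra $\widetilde{kF}$ by definition.

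I do not anticipate a serious obstacle here; the only point requiring a little care is the bookkeeping of signs from the Leibniz rule (which are irrelevant for the ideal generated) and the verification that completion is handled correctly — i.e. that the image of the continuous differential is the \emph{closed} ideal generated by the $\partial_a W$, matching the definition of $J(Q,F,W)$ via the closure $\overline{\langle-\rangle}$. This is immediate from continuity of $d$ together with the description of the closed ideal generated by a set recalled in the Remark following Definition~\ref{Def: semisimple algebra of vertices}. So the lemma follows directly from unwinding the definitions of $\bm\Gamma_{rel}(Q,F,W)$, $\bm\Pi_2(F)$, $J(Q,F,W)$ and $\widetilde{kF}$.
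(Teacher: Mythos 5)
Your proposal is correct and is exactly the unwinding of definitions that the paper has in mind: the paper gives no written proof, stating only that the lemma ``is an easy consequence of the definition,'' and your computation of $H^{0}$ as the cokernel of $d\colon\bm\Gamma_{rel}^{-1}\to\bm\Gamma_{rel}^{0}$ (respectively of $d$ on the degree $-1$ part of $\bm\Pi_{2}(F)$ spanned by paths through one $r_{i}$) is the standard and correct way to do it. Your attention to the completion issue --- that the image of the continuous differential on the completed degree $-1$ component is the \emph{closed} ideal $\overline{\langle\partial_{a}W\rangle}$, matching the definition of $J(Q,F,W)$ --- is the one point genuinely worth spelling out, and you handle it correctly via the explicit description of closed ideals recalled in the paper.
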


	Let $ (Q,F,W) $ be an ice quivers with potential. Let $ \bm\Gamma_{rel}=\bm\Gamma_{rel}(Q,F,W) $ be the associated complete relative Ginzburg dg algebra. Let $ (Q_{red},F_{red},W_{red}) $ be the reduction of $ (Q,F,W) $ as in Theorem~\ref{Thm:reduction}.
	
	\begin{Lem}\label{Lem:irredundant to dg}
		There is an irredundant potential $ W' $ such that $ \bm\Gamma_{rel}(Q,F,W)\cong\bm\Gamma_{rel}(Q,F,W') $.
	\end{Lem}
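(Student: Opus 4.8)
The plan is to produce $W'$ by deleting from $W$ exactly those terms that involve no unfrozen arrow, and then to observe that this deletion is invisible to the recipe defining $\bm\Gamma_{rel}$.

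First I would decompose the potential. Recall that $W$, as an element of $H\!H_{0}(\widehat{kQ})$, is a (possibly infinite, $\mathfrak{m}$-adically convergent) linear combination of cyclic paths of length $\geqslant 2$, each such path being taken up to the rotation equivalence $\alpha_{n}\cdots\alpha_{1}\sim\alpha_{n-1}\cdots\alpha_{1}\alpha_{n}$. A cyclic rotation of a path all of whose arrows lie in $F_{1}$ again has all its arrows in $F_{1}$, so the property ``every arrow of the cyclic path is frozen'' is well defined on rotation equivalence classes. Let $W_{F}$ be the sub-sum of all terms of $W$ of this type, and set $W'=W-W_{F}$; then every term of $W'$ contains at least one unfrozen arrow, i.e.\ $W'$ is irredundant. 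Both $W_{F}$ and $W'$ are again $\mathfrak{m}$-adically convergent combinations of cyclic paths of length $\geqslant 2$ whose loop-terms have length $\geqslant 3$ (these conditions are inherited termwise from $W$), hence are genuine potentials on $Q$.

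Next I would compare the two complete relative Ginzburg dg algebras. By Definition~\ref{Def:Relative Ginzburg algebra}, $\bm\Gamma_{rel}(Q,F,W)$ and $\bm\Gamma_{rel}(Q,F,W')$ have the same underlying pseudocompact graded algebra, namely the completed graded path algebra $\widehat{k\widetilde{Q}}$ of the graded quiver $\widetilde{Q}$, which depends only on $(Q,F)$ (the arrows $a^{\vee}$ being attached only to the unfrozen arrows $a$, and the loops $t_{i}$ only to the unfrozen vertices $i$). It remains to check that the two differentials agree, and since both are continuous derivations it suffices to check this on the arrows of $\widetilde{Q}$. On the arrows $a$ of $Q$ both differentials vanish, and on the loops $t_{i}$ both are given by $e_{i}\bigl(\sum_{a}[a,a^{\vee}]\bigr)e_{i}$, a formula that does not involve the potential. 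Finally, for each unfrozen arrow $a\in Q_{1}\setminus F_{1}$,
\[
\partial_{a}W=\partial_{a}W'+\partial_{a}W_{F}=\partial_{a}W',
\]
because no term of $W_{F}$ contains the unfrozen arrow $a$, so $\partial_{a}W_{F}=0$; hence $d(a^{\vee})=\partial_{a}W=\partial_{a}W'$ in both algebras. Thus the two differentials coincide on generators, the identity map of $\widehat{k\widetilde{Q}}$ is an isomorphism of dg algebras $\bm\Gamma_{rel}(Q,F,W)\iso\bm\Gamma_{rel}(Q,F,W')$, and $W'$ is the desired irredundant potential.

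The argument is essentially formal; the only point requiring a little care is the bookkeeping of the first step --- that passing to rotation equivalence classes does not mix the purely frozen terms with the rest, and that $\mathfrak{m}$-adic convergence and the degree conditions defining a potential are inherited by both pieces. After that, the key observation is simply that the cyclic derivative $\partial_{a}$ with respect to an unfrozen arrow annihilates every purely frozen cycle, so that the extra data $a^{\vee},t_{i}$ of the relative Ginzburg algebra only ever sees the irredundant part of $W$.
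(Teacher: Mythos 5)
Your proposal is correct and follows the same route as the paper's proof: decompose $W=W'+W_{F}$ by splitting off the purely frozen terms, note that $W'$ is irredundant, and observe that $\partial_{a}W_{F}=0$ for every unfrozen arrow $a$, so the two Ginzburg differentials coincide. Your write-up merely makes explicit the verifications (well-definedness on rotation classes, agreement of differentials on generators) that the paper leaves implicit.
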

	\begin{proof}
		We collect all terms containing only frozen arrows. Then there is a unique decomposition $ W=W'+W_{F} $ in which $ W' $ is irredundant and $ W_{F} $ is a potential on $ F $. Since $ \partial_{a}W_{F} $ is 0 for any arrow $ a\in Q_{1}\setminus F_{1} $, it is clear that $ \bm\Gamma_{rel}(Q,F,W) $ is isomorphic to $ \bm\Gamma_{rel}(Q,F,W') $.
	\end{proof}
	
	\begin{Lem}\label{Lem:right-equivalent to dg}
		Let $ (Q',F',W') $ be another ice quivers with potential. We denote by $ \bm\Gamma'_{rel}=\bm\Gamma_{rel}(Q',F',W') $ the associated complete relative Ginzburg dg algebra. Assume that $ (Q,F,W) $ and $ (Q',F',W') $ are right-equivalent. Then $ \bm\Gamma_{rel} $ and $ \bm\Gamma'_{rel} $ are isomorphic to each other.
	\end{Lem}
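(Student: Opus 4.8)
The plan is to promote the right equivalence $\varphi\colon\widehat{kQ}\iso\widehat{kQ'}$ to an isomorphism of dg algebras $\bm\Gamma_{rel}\iso\bm\Gamma'_{rel}$ which restricts to $\varphi$ on the degree~$0$ parts. As a preliminary reduction, observe that $\varphi$ is an isomorphism of pseudocompact $S$-algebras fixing every $e_{i}$ and carrying $\widehat{kF}$ onto $\widehat{kF'}$; passing to $\mathfrak{m}/\mathfrak{m}^{2}$ it induces an $S$-bimodule isomorphism $kQ_{1}\iso kQ'_{1}$, and since $\varphi(\mathfrak{m}_{F})=\mathfrak{m}_{F'}$ with $\mathfrak{m}_{F}=\widehat{kF}\cap\mathfrak{m}$, the span of the frozen arrows of $Q$ is carried onto that of $Q'$. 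Hence $Q$ and $Q'$ have, between each ordered pair of vertices, the same number of frozen and of unfrozen arrows; choosing an isomorphism of ice quivers $(Q,F)\iso(Q',F')$ we may and do assume $(Q',F',W')=(Q,F,\varphi(W))$ with $\varphi\in\mathrm{Aut}_{S}(\widehat{kQ})$ and $\varphi(\widehat{kF})=\widehat{kF}$.

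Next I would extend $\varphi$ to an algebra automorphism $\Phi$ of $\widehat{k\widetilde{Q}}$, generator by generator, as in the non-frozen case (cf.~\cite{kellerDerivedEquivalencesMutations2011}). On the arrows of $Q$ we put $\Phi=\varphi$. For an unfrozen arrow $a$ we set $\Phi(a^{\vee})$ to be the image of $a^{\vee}$ under the transpose of the inverse of the ``Jacobian'' of $\varphi$, i.e.\ of the $\widehat{kQ}$-bimodule automorphism of the space of noncommutative $1$-forms induced by $\varphi$ and expressed through the (non-cyclic) Fox derivatives $\partial_{b}\varphi(a)$; this Jacobian is invertible because its symbol, the action of $\varphi$ on $\mathfrak{m}/\mathfrak{m}^{2}$, is. For an unfrozen vertex $i$ we define $\Phi(t_{i})$ by the corresponding second-order term. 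With these formulas $\Phi$ is automatically invertible (the inverse being the analogous extension of $\varphi^{-1}$), and the identity $d\Phi=\Phi d$ on the three families of generators reduces to: (i) the trivial $d(a)=0$; (ii) $d\Phi(a^{\vee})=\varphi(\partial_{a}W)$, which is precisely the chain rule for cyclic derivatives under the change of variables $\varphi$, namely that $\varphi(\partial_{a}W)$ is the transpose-Jacobian transform of the $\partial_{b}(\varphi W)=\partial_{b}W'$; and (iii) $d\Phi(t_{i})=e_{i}\big(\sum_{a}[\varphi(a),\Phi(a^{\vee})]\big)e_{i}$, which follows from the compatibility of the Jacobian with the moment-map element $\sum_{a}[a,a^{\vee}]$. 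Since $\varphi$ fixes $\widehat{kF}$ setwise, $\Phi$ also respects the images of the Ginzburg functors, so the square relating $\bm G_{rel}$ and $\bm G'_{rel}$ commutes.

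Conceptually this is just the functoriality of the deformed relative $3$-Calabi--Yau completion: Proposition~\ref{Prop: reduce CY} identifies $\bm\Gamma_{rel}(Q,F,W)$ with a model of the deformed relative completion of $G\colon\widehat{kF}\hookrightarrow\widehat{kQ}$ at the class $(0,B(W))$, and $\varphi$ together with $\varphi|_{\widehat{kF}}$ is a morphism of this data sending $(0,B(W))$ to $(0,B(\varphi(W)))=(0,B(W'))$ by naturality of Connes' map $B$. I expect the only real work to be the explicit verification of (ii) and (iii) and the correct choice of the second-order term $\Phi(t_{i})$ making $\Phi$ a genuine dg isomorphism rather than merely a quasi-isomorphism (the latter being all that functoriality ``up to weak equivalence'' of the Calabi--Yau completion provides a priori). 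That computation is formal and is literally the same as in the non-frozen situation, the frozen arrows and vertices entering only through being fixed by $\varphi$.
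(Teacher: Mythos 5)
Your proposal is correct and follows essentially the same route as the paper, which simply defers to the proof of \cite[Lemma 2.9]{kellerDerivedEquivalencesMutations2011}: one extends the right equivalence $\varphi$ generator by generator to the doubled quiver via the (invertible, block-triangular with respect to the frozen/unfrozen decomposition) Jacobian and a second-order correction on the $t_{i}$, the compatibility with the differential being the chain rule for cyclic derivatives. You supply more detail than the paper does, and your observation that $\varphi(\widehat{kF})=\widehat{kF'}$ forces the unfrozen block of the Jacobian to be invertible (so that $\Phi(a^{\vee})$ involves only duals of unfrozen arrows) is exactly the point needed to transport the non-frozen argument to the relative setting.
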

	\begin{proof}
		The proof is the same as that of~\cite[Lemma 2.9]{kellerDerivedEquivalencesMutations2011}.
	\end{proof}
	%
	%
	
	\begin{Lem}
		There exists a quasi-isomorphism between the complete relative Ginzburg algebras
		$$ \bm\Gamma_{rel}(Q,F,W)\ra\bm\Gamma_{rel}(Q_{red},F_{red},W_{red}) .$$
	\end{Lem}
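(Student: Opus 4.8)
The plan is to mimic Keller--Yang's argument for the absolute case \cite{kellerDerivedEquivalencesMutations2011}, the only genuinely new point being the half-frozen $2$-cycles. First I would reduce the general case to a very explicit one. By Lemma~\ref{Lem:irredundant to dg} I may assume $W$ is irredundant, and then by Lemma~\ref{Lem:Normal form of potential} together with Lemma~\ref{Lem:right-equivalent to dg} I may assume $W=\widetilde W=\sum_{i=1}^{M}\alpha_i\beta_i+\sum_{i=M+1}^{N}(\alpha_i\beta_i+\alpha_i p_i)+W_1$ is in the normal form of Lemma~\ref{Lem:Normal form of potential}. Inspecting Pressland's reduction (the ice analogue of the Derksen--Weyman--Zelevinsky splitting theorem) behind Theorem~\ref{Thm:reduction}, and invoking Proposition~\ref{Prop:rduction up to right equ} and Lemma~\ref{Lem:right-equivalent to dg} once more, I may take as an explicit model of $(Q_{red},F_{red},W_{red})$ the ice quiver with potential obtained from $(Q,F,\widetilde W)$ by deleting $\alpha_i,\beta_i$ for $1\le i\le M$, deleting the frozen arrow $\beta_i$ and re-declaring $\alpha_i$ frozen for $M<i\le N$, with potential $W_{red}=\sum_{i=M+1}^{N}\alpha_i p_i+W_1$. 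Conditions (i)--(iv) guarantee that no deleted arrow occurs in $W_{red}$, that $(Q_{red},F_{red},W_{red})$ is reduced, and (eliminating $\alpha_i,\beta_i$ for $i\le M$ via $\partial_{\alpha_i}\widetilde W=\beta_i$, $\partial_{\beta_i}\widetilde W=\alpha_i$ and $\beta_i$ for $i>M$ via $\partial_{\alpha_i}\widetilde W=\beta_i+p_i$) that $J(Q,F,\widetilde W)\cong J(Q_{red},F_{red},W_{red})$, so this really is the reduction up to right equivalence.

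Next I would write down the comparison morphism. With $\bm\Gamma=\bm\Gamma_{rel}(Q,F,\widetilde W)$ and $\bm\Gamma_{red}=\bm\Gamma_{rel}(Q_{red},F_{red},W_{red})$ as in Definition~\ref{Def:Relative Ginzburg algebra}, define a continuous morphism of graded algebras $\varphi\colon\bm\Gamma\to\bm\Gamma_{red}$ by fixing each surviving arrow, each dual $a^{\vee}$ of a surviving unfrozen arrow and each loop $t_i$; sending $\alpha_i,\beta_i,\alpha_i^{\vee},\beta_i^{\vee}\mapsto 0$ for $i\le M$; and sending $\alpha_i^{\vee}\mapsto 0$, $\beta_i\mapsto -p_i$ for $M<i\le N$. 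Using the normal form conditions (treating the half-frozen $2$-cycles one at a time if their $p_i$'s interact), one checks that $\varphi$ is a dg morphism; the nonformal verifications are $\varphi(d\alpha_i^{\vee})=\varphi(\beta_i+p_i)=-p_i+p_i=0$ for $i>M$, $\varphi(d\beta_i)=0=d(-p_i)$ since $p_i$ is a series of degree-$0$ arrows, $\varphi(\partial_a\widetilde W)=\partial_a W_{red}$ for a surviving unfrozen arrow $a$ (the $2$-cycle terms of $\widetilde W$ do not contribute), and $\varphi(dt_i)=dt_i$ because the surplus commutators $[\alpha_j,\alpha_j^{\vee}]+[\beta_j,\beta_j^{\vee}]$ ($j\le M$) in $dt_i$ lie in $\ker\varphi$. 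Moreover $\varphi$ is surjective, with $\ker\varphi$ the closed ideal generated by $\{\alpha_i,\beta_i,\alpha_i^{\vee},\beta_i^{\vee}:i\le M\}\cup\{\alpha_i^{\vee},\beta_i+p_i:M<i\le N\}$.

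To see $\varphi$ is a quasi-isomorphism I would introduce the auxiliary dg algebra $\bm\Gamma''$ having the same underlying completed graded path algebra as $\bm\Gamma$ and the same differential, except that $d''(t_i)=e_i\big(\sum_{a\text{ surviving}}[a,a^{\vee}]\big)e_i$, i.e.\ one drops from $dt_i$ the commutators involving deleted arrows; this is a differential because $d\big([\alpha_j,\alpha_j^{\vee}]+[\beta_j,\beta_j^{\vee}]\big)=0$. Then $\bm\Gamma_{red}$ is a dg subalgebra of $\bm\Gamma''$, and $\bm\Gamma''$ is obtained from $\bm\Gamma_{red}$ by iteratively freely adjoining the acyclic pairs $(\alpha_i^{\vee},\beta_i)$, $(\beta_i^{\vee},\alpha_i)$ for $i\le M$ and $(\alpha_i^{\vee},\beta_i+p_i)$ for $M<i\le N$ --- at each step the ``target'' is a cycle of the subalgebra built so far, so each adjunction is a trivial cofibration and $\bm\Gamma_{red}\hookrightarrow\bm\Gamma''$ is a quasi-isomorphism (with retraction given by $\varphi$ read off using $d''$). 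Finally $d=d''+\delta$, where $\delta$ is supported on the loops and strictly raises the filtration of $\bm\Gamma$ by ``number of factors among $\alpha_j,\beta_j,\alpha_j^{\vee},\beta_j^{\vee}$, $j\le M$''; this filtration is complete, exhaustive and preserved by $d$, with associated graded $(\bm\Gamma,d'')=\bm\Gamma''$, so the resulting convergent spectral sequence has $E_1\cong H^{*}(\bm\Gamma_{red})$ concentrated in filtration degree $0$, hence degenerates, and unwinding the identifications shows $\varphi$ induces the isomorphism $H^{*}(\bm\Gamma)\xrightarrow{\ \sim\ }H^{*}(\bm\Gamma_{red})$.

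The hardest part is not any single calculation but the half-frozen $2$-cycles $\alpha_i(\beta_i+p_i)$: there the dual $\alpha_i^{\vee}$ of the unfrozen arrow $\alpha_i$ must pair with $\beta_i+p_i$ instead of with a surviving arrow, with $\beta_i$ being absorbed into the re-frozen arrow $\alpha_i$ and the path $p_i$; one must confirm that this really is what Pressland's reduction does up to right equivalence (controlling occurrences of the $\alpha_i$ inside the $p_j$), and --- for the later applications concerning $\bm G_{rel}$ and the identification of derived preprojective algebras --- that $\varphi$ is compatible with the Ginzburg functors. The remaining points (well-definedness, surjectivity, and convergence of the spectral sequence in the pseudocompact setting via the $\mathfrak m$-adic filtration) are routine and run exactly as in \cite[Appendix A]{kellerDerivedEquivalencesMutations2011}.
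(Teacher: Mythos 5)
Your overall strategy --- reduce to the normal form of Lemma~\ref{Lem:Normal form of potential}, write down an explicit projection onto the reduced relative Ginzburg algebra, and prove it is a quasi-isomorphism --- is the same as the paper's, and you correctly identify the half-frozen $2$-cycles as the genuinely new point. There is one minor imprecision and one genuine gap.

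The minor point: your formula $\varphi(\beta_i)=-p_i$ for $M<i\le N$ is wrong whenever some $\alpha_j$ occurs inside a $p_i$ (the normal form only forbids this for the $\beta_j$, via condition (iii)); the correct value, which the paper uses, is $\varphi(\beta_i)=-\partial_{\alpha_i}W_{red}$. You flag the issue but leave it unresolved, and it is needed for $\varphi(d\alpha_i^{\vee})=0$.

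The gap is in the spectral-sequence step. The difference $\delta=d-d''$ on a loop $t_j$ is $e_j\bigl(\sum_{i\le M}([\alpha_i,\alpha_i^{\vee}]+[\beta_i,\beta_i^{\vee}])+\sum_{i>M}[\alpha_i,\alpha_i^{\vee}]\bigr)e_j$: the commutators attached to the re-frozen arrows $\alpha_i$ with $i>M$ must also be dropped in $d''$, because $\alpha_i^{\vee}$ does not survive in $\bm\Gamma_{rel}(Q_{red},F_{red},W_{red})$ even though $\alpha_i$ does. These terms contain no factor from $\{\alpha_j,\beta_j,\alpha_j^{\vee},\beta_j^{\vee}:j\le M\}$, so your filtration leaves them in degree $0$; the associated graded of $(\bm\Gamma,d)$ is therefore not $(\bm\Gamma,d'')$ but the intermediate algebra $\bm\Gamma_{rel}(Q'',F,W'')$ in which only the trivial $2$-cycles have been discarded. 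Enlarging the ``bad'' set to include $\alpha_i^{\vee}$ and $\beta_i$ for $i>M$ does not repair this, because then $d\alpha_i^{\vee}=\beta_i+\partial_{\alpha_i}W_{red}$ has a component of filtration $0$ and $d$ no longer preserves the filtration. This is exactly why the paper argues in two stages: first the canonical projection $\bm\Gamma_{rel}(Q,F,W)\to\bm\Gamma_{rel}(Q'',F,W'')$ killing the trivial part as in Keller--Yang, and then, for the half-frozen $2$-cycles, an explicit homotopy $h$ (a twisted derivation with $h(\beta_i)=\alpha_i^{\vee}$ and $h=0$ on all other generators) showing that the inclusion $\bm\Gamma_{rel}(Q_{red},F_{red},W_{red})\hookrightarrow\bm\Gamma_{rel}(Q'',F,W'')$ and the projection $f$ are mutually inverse homotopy equivalences; the problematic commutators $[\alpha_i,\alpha_i^{\vee}]$ cause no trouble there because $f(\alpha_i^{\vee})=0$ and $h$ annihilates them. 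Your ``adjoining acyclic pairs'' argument is essentially this homotopy, but you only apply it to $(\bm\Gamma,d'')$; the passage from $(\bm\Gamma,d'')$ back to $(\bm\Gamma,d)$ is where your argument breaks down.
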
	
	\begin{proof}
		By Lemma~\ref{Lem:irredundant to dg}, we can assume that $ W $ is irredundant. By Lemmas~\ref{Lem:Normal form of potential} and~\ref{Lem:right-equivalent to dg}, we can assume that $ W $ has an expression of the form~(\ref{Equ:potential decomp}) satisfying conditions (i)–(iv), i.e.$ \, $the potential $ W $ can be written as
		$$ \xymatrix{
			W=\displaystyle\sum_{i=1}^{M}\alpha_{i}\beta_{i}+\sum_{i=M+1}^{N}(\alpha_{i}\beta_{i}+\alpha_{i}p_{i})+W_{1}
		} $$ as in (\ref{Equ:potential decomp}).
		Take $ Q' $ to be the subquiver of $ Q $ consisting of all vertices and the arrows $ \alpha_{i}, \beta_{i} $ for $ i\leqslant M $ and $ W'=\sum_{i=1}^{M}\alpha_{i}\beta_{i} $.  Then the quiver with potential $ (Q',W') $ is trivial.
		
		Let $ Q'' $ be the subquiver of $ Q $ consisting of all vertices and those arrows that are not in $ Q_{1}' $. Let $ W''=W-W'=\sum_{i=M+1}^{N}(\alpha_{i}\beta_{i}+\alpha_{i}p_{i})+W_{1} $. We see that $ W'' $ does not involve any arrows of $ Q' $. Thus, this defines a potential on $ Q'' $. As in~\cite[Lemma 2.10]{kellerDerivedEquivalencesMutations2011}, we see that the canonical projection $ \bm\Gamma_{rel}(Q,F,W)\ra\bm\Gamma_{rel}(Q'',F,W'') $ is a quasi-isomorphism. Simplifying the expression for $ W'' $ and relabeling arrows for simplicity, we have
		$$ W''=\sum_{i=1}^{K}(\alpha_{i}\beta_{i}+\alpha_{i}p_{i})+W_{1}=\sum_{i=1}^{K}\alpha_{i}\beta_{i}+W_{red} ,$$ where each $ \alpha_{i} $ is unfrozen, each $ \beta_{i} $ is frozen and does not appear in any term of $ W_{red} $. This is ensured by the condition (iii) in Lemma~\ref{Lem:Normal form of potential}. 
		
		By the proof of~\cite[Theorem 3.6]{presslandMutationFrozenJacobian2020}, the ice quiver $ (Q_{red},F_{red}) $ is obtained from $ (Q'',F) $ by deleting $ \beta_{i} $ and freezing $ \alpha_{i} $ for each $ 1\leqslant i\leqslant K $. Then the ice quiver with potential $ (Q_{red},F_{red},W_{red}) $ is the reduction of $ (Q,F,W) $.
		
		Let $ M_{red} $ be the pseudocompact dg $ kQ_{0}'' $-bimodule generated by 
		$$ S_{red}=\{\gamma,\delta^{\vee},t_{i}\,|\,\gamma\in (Q_{red})_{1},\,\delta\in (Q_{red})_{1}\setminus (F_{red})_{1},i\in (Q_{red})_{0}\setminus (F_{red})_{0}\} .$$ Let $ M'' $ be the pseudocompact dg $ kQ''_{0} $-bimodule generated by
		$$ S''=S_{red}\cup\{\beta_{i},\alpha_{i}^{\vee}\,|\,1\leqslant i\leqslant K\} .$$
		By the construction of relative Ginzburg dg algebras, the underlying graded categories have the forms
		$$ \bm\Gamma_{rel}(Q_{red},F_{red},W_{red})=T_{kQ''_{0}}(M_{red}) $$ and 
		$$ \bm\Gamma_{rel}(Q'',F,W'')=T_{kQ''_{0}}(M'') .$$
		It is easy to see that we have an inclusion $ i:M_{red}\hookrightarrow M'' $ of dg $ kQ''_{0} $-bimodules. Then $ i $ induces a fully faithful morphism $ i:\bm\Gamma_{rel}(Q_{red},F_{red},W_{red})\hookrightarrow\bm\Gamma_{rel}(Q'',F,W'') $ in $ \mathrm{PcAlgc}(kQ''_{0}) $. We define another dg $ kQ''_{0} $-bimodule morphism $ \varphi:M''\ra M_{red} $ as follows:
		\begin{itemize}
			\item $ \varphi(\gamma)=\gamma $ for any arrow $ \gamma $ in $ Q''_{1} $ such that $ \gamma\neq\beta_{i} $ with $ 1\leqslant i\leqslant K $,
			\item $ \varphi(\gamma^{\vee})=\gamma^{\vee} $ for any arrow $ \gamma $ in $ Q''_{1}\setminus F_{1} $ such that $ \gamma\neq\alpha_{i} $ with $ 1\leqslant i\leqslant K $,
			\item $ \varphi(\beta_{i})=-\partial_{\alpha_{i}}W_{red} $ for each $ 1\leqslant i\leqslant K $,
			\item $ \varphi(\alpha_{i}^{\vee})=0 $ for each $ 1\leqslant i\leqslant K $,
			\item $ \varphi(t_{i})=t_{i} $ for each unfrozen vertex $ i $.
		\end{itemize}
		Then $ \varphi $ induces a morphism $ f:\bm\Gamma_{rel}(Q'',F,W'')\ra\bm\Gamma_{rel}(Q_{red},F_{red},W_{red}) $ in $ \mathrm{PcAlgc}(kQ''_{0}) $. To see that $ f $ is a quasi-isomorphism, it is enough to show that $ f $ is a homotopy inverse of $ i $. 
		
		It is clear that we have $ f\circ i=\id $. We define a continuous morphism $ h:\bm\Gamma_{rel}(Q'',F,W'')\ra\bm\Gamma_{rel}(Q'',F,W'') $ of graded $ k $-modules which is homogeneous of degree -1, satisfies $$ h(xy)=h(x)f(y)+(-1)^{-n}xh(y) $$ for all $ x\in\bm\Gamma^{(n)}_{rel}(Q'',F,W'') $, $ y\in\bm\Gamma_{rel}(Q'',F,W'') $ and
		
		\begin{itemize}
			\item $ h(\beta_{i})=\alpha_{i}^{\vee} $ for each $ 1\leqslant i\leqslant K $,
			\item $ h(\delta)=0 $ for all other arrows $ \delta $.
		\end{itemize}
		Then we have $ \id-i\circ f =d(h) $. Thus, the morphism $ f $ is a homotopy inverse of $ i $.
	\end{proof}

	\subsection{Cofibrant resolutions of simples over a tensor algebra}\label{subsection:Cofibrant resolutions of simples over a tensor algebra}
	Let $ Q $ be a finite graded quiver and $ \widehat{kQ} $ the complete path algebra. Let $ \mathfrak{m} $ be the two-sided ideal of $ \widehat{kQ} $ generated by arrows of $ Q $. Let $ A=(\widehat{kQ},d) $ be a pseudocompact dg algebra whose differential takes each arrow of $ Q $ to an element of $ \mathfrak{m} $.
	
	For a vertex $ i $ of $ Q $, let $ P_{i}=e_{i}A $, and let $ S_{i} $ be the simple module corresponding to $ i $. Then we have a short exact sequence in $ \cc(A) $
	$$ 0\ra \ker(\pi)\ra P_{i}\xrightarrow{\pi} S_{i}\ra 0 ,$$ where $ \pi $ is the canonical projection from $ P_{i} $ to $ S_{i} $. More explicitly, the graded $ A $-module $ \ker(\pi) $ decomposes as
	\begin{equation*}
		\ker(\pi)=\bigoplus_{\rho\in Q_{1}:t(\rho)=i}\rho P_{s(\rho)}	
	\end{equation*} 
	with the induced differential. Thus, the simple module $ S_{i} $ is quasi-isomorphic to
	$$ P=\cone(\ker(\pi)\ra P_{i}) ,$$ whose underlying graded space is
	$$ \bigoplus_{\rho\in Q_{1}:t(\rho)=i}\Si\rho P_{s(\rho)}\oplus P_{i} .$$ By~\cite[Section 2.14]{kellerDerivedEquivalencesMutations2011}, the dg module $ P $ is a cofibrant dg $ A $-module and hence it is a cofibrant resolution of $ S_{i} $. In particular, the simple dg module $ S_{i} $ belongs to the perfect derived category $ \per(A) $.
	

	\section{Main results}\label{Section5}
	
	\subsection{Compatibility with Morita functors and localizations}
	Let $ R $ be a finite dimensional separable $ k $-algebra. Let $ J:A\to A' $ a morphism in $ \mathrm{PcAlgc}(R) $. We say that $ J $ is a \emph{localization functor} if the derived functor $ J^{*} $ induces an equivalence
	\begin{align*}
		J^{*}:\mathcal{D}(A)/\mathcal{N}\iso\mathcal{D}(A')
	\end{align*}
	for some localizing subcategory $ \mathcal{N} $ of $ \mathcal{D}(A) $. Equivalently, the restriction functor $ J_{*}:\mathcal{D}(A')\to\mathcal{D}(A) $ is fully faithful.
	
	We say that $ J:A\to A' $ is a \emph{Morita functor} if restriction functor $ J_{*} $ is an equivalence from $ \mathcal{D}(A') $ to $ \mathcal{D}(A) $. Equivalently, the derived functor $ J^{*} $ is an equivalence.

	\begin{Thm}\label{Thm:Derived equivalence}
		Let $ S $ be another finite dimensional separable $ k $-algebra and $ B $ an object in $ \mathrm{PcAlgc}(S) $. Let $ I:B\ra B' $ be a localization functor in $ \mathrm{PcAlgc}(S) $ and $ J:A\ra A' $ a localization functor in $ \PcAlgc $. Suppose that we have morphisms $ f:B\ra A $ and $ f':B'\ra A' $ such that the following square commutes in the category of dg $ k $-algebras
		\begin{align}\label{Diagram:commutative}
			\xymatrix{
				B\ar"1,3"^{f}\ar[d]_{I}&&A\ar[d]^{J}\\
				B'\ar"2,3"^{f'}&&A'.
			}
		\end{align}
		Assume that $ B $, $ A $, $ B' $ and $ A' $ are smooth and connective. Let $ [\xi]=[(s\xi_{B},\xi_{A})] $ be an element of $ H\!H_{n-2}(f) $ and $ [\xi']=[(s\xi_{B'},\xi_{A'})] $ the element of $ H\!H_{n-2}(f') $ obtained as the image of $ [\xi] $ under the map induced by $ I $ and $ J $. Then we have the following commutative diagram in the category of dg $ k $-algebras
		\begin{align*}
			\xymatrix{
				B\ar@{^{(}->}[r]\ar[d]_{I}&\bm{\Pi}_{n-1}(B,\xi_{B})\ar[r]^{\widetilde{f}}\ar[d]_{I'}&\bm{\Pi}_{n}(A,B,\xi)\ar[d]_{J'}\\
				B'\ar@{^{(}->}[r]&\bm{\Pi}_{n-1}(B',\xi_{B'})\ar[r]^{\widetilde{f'}}&\bm{\Pi}_{n}(A',B',\xi')
			}
		\end{align*}
		where $ I' $, $ J' $ are also localization functors. Moreover, $ I' $ (respectively, $ J' $) is a Morita functors if $ I $ (respectively, $ J $) is. 
	\end{Thm}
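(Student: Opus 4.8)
The plan is to show that the construction of the deformed relative Calabi--Yau completion in Section~\ref{Section3} is functorial with respect to the square~(\ref{Diagram:commutative}), producing $I'$ and $J'$, and then to verify the homological epimorphism property one tensor degree at a time, the crucial input being a base change formula for inverse dualizing bimodules. First I would produce $I'$ and $J'$. Since the square commutes, $J^{e}\circ f^{e}=f'^{e}\circ I^{e}$, so the multiplication map $m_{f}\colon B\lcten_{B^{e}}A^{e}\to A$ is carried by $\bL(J^{e})^{*}$ onto $m_{f'}$, via the canonical maps $\bL(J^{e})^{*}(B\lcten_{B^{e}}A^{e})\cong B\lcten_{B^{e}}A'^{e}\to B'\lcten_{B'^{e}}A'^{e}$ and $\bL(J^{e})^{*}A\cong A'\lcten_{A}A'\simeq A'$. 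Dualizing (using smoothness of $B$) carries $m_{f}^{\vee}$ onto $m_{f'}^{\vee}$, hence $\Xi$ onto $\Xi'$; by hypothesis $[\xi]$ is carried onto $[\xi']$, so $\xi_{B}$ onto $\xi_{B'}$ and $\xi$ onto $\xi'$. Choosing cofibrant replacements compatibly, applying $\bL T$ and extending along $I$ and $J$ on the semisimple base rings, one obtains dg algebra morphisms $I'$ and $J'$, which on underlying completed tensor algebras are $T$ of the induced maps of bimodules; by construction $\widetilde{f'}\circ I'=J'\circ\widetilde{f}$ strictly.

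Next I would establish a base change lemma. For any localization functor $I\colon B\to B'$ of connective smooth pseudocompact dg algebras, the natural map $\bL(I^{e})^{*}(B^{\vee})\to B'^{\vee}$ is an isomorphism in $\cd_{pc}(B'^{e})$: since $B$ is smooth, $B\in\per_{pc}(B^{e})$, so base change for $\RHom$ with perfect first argument gives $\bL(I^{e})^{*}\RHom_{B^{e}}(B,B^{e})\simeq\RHom_{B'^{e}}(\bL(I^{e})^{*}B,B'^{e})$, and $\bL(I^{e})^{*}B\cong B'\lcten_{B}B'\simeq B'$ because $I$ is a homological epimorphism. Likewise, under the hypotheses of the theorem, $\bL(J^{e})^{*}\Xi\simeq\Xi'$: one has $\Xi\simeq\Si\,\RHom_{A^{e}}\big(\cone(A\lcten_{B}A\to A),A^{e}\big)$, the cone is perfect over $A^{e}$ since $A$ is smooth and $A\lcten_{B}A\cong\bL(f^{e})^{*}B$ with $B$ smooth, and then $\bL(J^{e})^{*}\cone(A\lcten_{B}A\to A)\simeq\cone(A'\lcten_{B'}A'\to A')$, using $J^{e}f^{e}=f'^{e}I^{e}$, $B'\lcten_{B}B'\simeq B'$ and $A'\lcten_{A}A'\simeq A'$; dualizing gives the claim. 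Tracing the definitions, these isomorphisms identify the natural maps induced by the functoriality of the first step.

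Then I would deduce that $I'$ and $J'$ are localizations. It suffices to show each is a homological epimorphism, i.e.\ $\bm\Pi_{n-1}(B',\xi_{B'})\lcten_{\bm\Pi_{n-1}(B,\xi_{B})}\bm\Pi_{n-1}(B',\xi_{B'})\xrightarrow{\ \sim\ }\bm\Pi_{n-1}(B',\xi_{B'})$, and similarly for $J'$. Filtering the completed tensor algebras by tensor degree, the summand $\xi_{B}$ of the differential strictly lowers tensor degree and hence vanishes on the associated graded; this reduces the claim to the undeformed morphisms $\bm\Pi_{n-1}(B)\to\bm\Pi_{n-1}(B')$ and $\bm\Pi_{n}(A,B)\to\bm\Pi_{n}(A',B')$. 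For those, the base change lemma identifies the targets with $\bL T_{B'}(\bL(I^{e})^{*}(\Si^{n-2}B^{\vee}))$ and $\bL T_{A'}(\bL(J^{e})^{*}(\Si^{n-2}\Xi))$, so the defining triangle $M\lcten_{B}\bm\Pi_{n-1}(B)\to\bm\Pi_{n-1}(B)\to B$ (with $M=\Si^{n-2}\mathbf p B^{\vee}$) and its iterates, together with $B'\lcten_{B}B'\simeq B'$ (respectively $A'\lcten_{A}A'\simeq A'$), give the isomorphism by a d\'evissage as in Keller's deformed Calabi--Yau completions~\cite{kellerDeformedCalabiYau2011}. If moreover $I$ (respectively $J$) is a Morita functor, then $\bL(I^{e})^{*}$ (respectively $\bL(J^{e})^{*}$) is an equivalence of pseudocompact bimodule categories, and the same d\'evissage upgrades the isomorphism to an equivalence, so $I'$ (respectively $J'$) is a Morita functor.

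The hard part will be the last step's passage from associated graded objects back to the algebras in the pseudocompact, completed setting: one must check that the tensor degree filtration on the relevant completed derived tensor products is exhaustive and complete and that the associated spectral sequence converges, so that an isomorphism on associated graded objects does lift to an isomorphism; this is where the argument genuinely differs from the non-pseudocompact case treated in \cite{yeungRelativeCalabiYauCompletions2016} and \cite{bozecRelativeCriticalLoci2020}. The base change isomorphisms of the second step also need care, since $\RHom_{(-)^{e}}(-,(-)^{e})$ commutes with $\bL(\cdot)^{*}$ only by virtue of the smoothness hypotheses; by contrast, the verification that $I'$ and $J'$ are honest dg algebra morphisms making the square commute is routine once the functoriality of the Section~\ref{Section3} construction is in place.
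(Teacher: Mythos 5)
Your proposal is correct in outline, and its first step (producing $I'$ and $J'$ from the functoriality of the deformed relative Calabi--Yau completion, via base change of $m_f$, dualization using smoothness, and compatibility of the classes $\xi$, $\xi'$) matches the paper, which packages the same content into the commutative diagrams built from the canonical bar resolutions and invokes \cite[Theorem 5.8]{kellerDeformedCalabiYau2011}. Where you genuinely diverge is the key step, showing that $J'$ is a localization. You verify the homological epimorphism criterion $\bm\Pi_{n}(A',B',\xi')\lcten_{\bm\Pi_{n}(A,B,\xi)}\bm\Pi_{n}(A',B',\xi')\simeq\bm\Pi_{n}(A',B',\xi')$ by filtering by tensor degree to kill the deformation on the associated graded and then running a d\'evissage, which forces you to confront completeness and convergence of the filtration spectral sequence in the pseudocompact setting -- you correctly flag this as the hard part. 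The paper instead proves full faithfulness of $J'_{*}$ directly: an arbitrary cofibrant module $M$ over the completion is presented by the two-term exact sequence
\begin{align*}
0\ra M\otimes_{A'}\Xi_{f'}[n-2]\otimes_{A'}T_{A'}(\Xi_{f'}[n-2])\ra M\otimes_{A'}T_{A'}(\Xi_{f'}[n-2])\ra M\ra 0,
\end{align*}
so $\Hom$-spaces in $\cd(\bm\Pi_{n}(A',B',\xi'))$ are cones of maps between $\Hom_{A'}(M,N)$ and $\Hom_{A'}(M\otimes_{A'}\Xi_{f'}[n-2],N)$, and everything reduces to full faithfulness of $J_{*}$ plus the base change formulas for $A^{\vee}$ and $f^{*}(B^{\vee})$ (your second step). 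This route absorbs the deformed differential into the module structure map $\lambda$ and avoids the filtration convergence issue entirely, while also proving the statement for all pairs of modules at once rather than only for the algebra tensored with itself (the two are of course equivalent via the counit of the adjunction $(\bL J'^{*},J'_{*})$). For the Morita refinement both arguments agree in substance: the quasi-isomorphism $\Xi_{f}\ra J_{*}(\Xi_{f'})$ propagates through the completed tensor powers to make $J'$ itself a quasi-equivalence. If you pursue your filtration route, you should supply the completeness argument explicitly, since in the completed tensor algebra the associated graded is a direct sum while the algebra is a product, and an isomorphism on associated graded lifts only under a completeness hypothesis on the filtered resolution you choose.
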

	\begin{proof} 
		By~\cite[Thm 5.8]{kellerDeformedCalabiYau2011}, there exists a canonical localization functor $ I' $ such that the leftmost square above commutes and $ I' $ is a Morita functor if $ I $ is. We use the same method as in~\cite[Thm 5.8]{kellerDeformedCalabiYau2011} to show the existence of $ J' $ and the commutativity of the rightmost square.
		
		Let $ P_{B} $, $ P_{A} $, $ P_{A'} $  and $ P_{B'} $ be the canonical bar resolutions of $ B $, $ A $, $ B' $ and $ A' $ as pseudocompact bimodules over themselves respectively (see~\cite[Lemma B.2]{vandenberghCalabiYauAlgebrasSuperpotentials2015}). We denote by
		$ j_{f}:f^{*}(P_{B})\to P_{A} $, $ j_{f'}:f'^{*}(P_{B'})\to P_{A'} $, $ j_{I}:I^{*}(P_{B})\to P_{B'} $ and $ j_{J}:J^{*}(P_{A})\to P_{A'} $ the canonical morphisms induced by $ f $, $ f' $, $ I $ and $ J $ respectively. 
		
		We denote by $ k_{I}:P_{B}\to I_{*}(P_{B'}) $ the canonical morphism induced by the adjunction $ (I^{*},I_{*}) $. Then we have a commutative diagram in $ \cc_{pc}(A^{e}) $
		\begin{align}\label{Morita comm}
			\xymatrix{
				P_{A}^{\vee}\ar[r]^{j_{f}^{\vee}}\ar[d]^{j_{J}^{\vee}}&f^{*}(P_{B}^{\vee})\ar[r]^{l_{f}}\ar[d]^{f^{*}(k_{I})^{\vee}}&\Xi_{f}\ar[d]^{r}\\
				J_{*}(P_{A'}^{\vee})\ar[r]^-{J_{*}(j_{f'})^{\vee}}&J_{*}f'^{*}(P_{B'}^{\vee})=f^{*}I_{*}(P_{B'}^{\vee})\ar[r]^-{J_{*}(l_{f'})}&J_{*}(\Xi_{f'}),
			}
		\end{align}
		where $ \Xi_{f} $ is the mapping cone of $ j_{f}^{\vee} $ and $ \Xi_{f'} $ is the mapping cone of $ j_{f'}^{\vee} $.
		
		By the commutative diagram~(\ref{Diagram:commutative}), we have the following commutative diagram of Hochschild complexes
		\begin{align*}
			\xymatrix{
				H\!H(B)\ar[r]^{f}\ar[d]^{I}&H\!H(A)\ar[d]^{J}\ar[r]&H\!H(f)\ar[d]\\
				H\!H(B')\ar[r]^{f'}&H\!H(A')\ar[r]&H\!H(f').
			}
		\end{align*}
		Thus, we have $ \xi_{B'}=I(\xi_{B}) $ and $ \xi_{A'}=J(\xi_{A}) $.
		
		The Hochschild homology classes $ \xi_{B} $ and $ \xi_{A} $ yield the following two commutative squares in $ \cc_{pc}(B^{e}) $ and $ \cc_{pc}(A^{e}) $ respectively
		
		\begin{align}\label{Morita xiA}
			\xymatrix{
				P_{B}^{\vee}\ar[r]^{\xi_{B}}\ar[d]&B\ar[d]\\
				I_{*}(P_{B'}^{\vee})\ar[r]^{I(\xi_{B})}&I_{*}(B'),
			}
		\end{align}
		
		\begin{align}\label{Morita xiB}
			\xymatrix{
				P_{A}^{\vee}\ar[r]^{\xi_{A}}\ar[d]&A\ar[d]\\
				J_{*}(P_{A'}^{\vee})\ar[r]^{J(\xi_{A})}&J_{*}(A').
			}
		\end{align}
		Then we get the following commutative diagram in $ \cc_{pc}(A^{e}) $
		\begin{align}\label{Morita Xi}
			\xymatrix{
				\Xi_{f}\ar[r]^{\xi}\ar[d]&\mathcal{A}\ar[d]\\
				J_{*}(\Xi_{f'})\ar[r]^{\xi'}&J_{*}(A').
			}
		\end{align}
		
		Combining the commutative diagrams (\ref{Morita comm}), (\ref{Morita xiA}), (\ref{Morita Xi}) and the proofs in \cite[Theorem 4.6, Theorem 5.8]{kellerDeformedCalabiYau2011}, we obtain the following commutative diagram in the category of dg $ k $-algebras.
		\begin{align*}
			\xymatrix{
				B\ar@{^{(}->}[r]\ar[d]_{I}&\bm{\Pi}_{n-1}(B,\xi_{B})\ar[r]^{\widetilde{f}}\ar[d]_{I'}&\bm{\Pi}_{n}(A,B,\xi)\ar[d]_{J'}\\
				B'\ar@{^{(}->}[r]&\bm{\Pi}_{n-1}(B',\xi_{B'})\ar[r]^{\widetilde{f'}}&\bm{\Pi}_{n}(A',B',\xi').
			}
		\end{align*}
		It remains to be shown that the restriction functor $ J'_{*}:\mathcal{D}(\bm{\Pi}_{n}(A',B',\xi'))\ra\mathcal{D}(\bm{\Pi}_{n}(A,B,\xi)) $ induced by $ J' $ is fully faithful.
		
		Let $ M $ be a right $ \bm{\Pi}_{n}(A',B',\xi') $-module and suppose that $ M $ is cofibrant.
		It is given by its underlying right $ A' $-module and a morphism of graded modules homogeneous of degree 0
		\begin{align*}
			\xymatrix{
				\lambda:M\otimes\Xi_{f'}[n-2]\ar[r]&M
			}
		\end{align*}
		such that $ (d\lambda)(m\otimes x)=m\xi'(x)$ for all $ m\in M $ and $ x\in\Xi_{f'}[n-2] $.
		Since $ \bm{\Pi}_{n}(A',B',\xi') $ is also cofibrant as right $ A' $-module, the underlying $ A' $-module of $ M $ is also cofibrant. Then we have an exact sequence of cofibrant $ \bm{\Pi}_{n}(A',B',\xi') $-modules
		\begin{align*}
			\xymatrix{
				0\ar[r]&M\otimes_{A'}\Xi_{f'}[n-2]\otimes_{A'}T_{A'}(\Xi_{f'}[n-2])\ar[r]^-{\alpha}&M\otimes_{A'}T_{A'}(\Xi_{f'}[n-2])\ar[r]&M\ar[r]&0
			}
		\end{align*}
		where $ \alpha(m\otimes x\otimes u)=mx\otimes u-m\otimes xu $. 
		
		This shows that the cone over the following morphism
		\begin{align*}
			\xymatrix{
				M\otimes_{A'}\Xi_{f'}[n-2]\otimes_{A'}T_{A'}(\Xi_{f'}[n-2])\ar[r]^-{\alpha}&M\otimes_{A'}T_{A'}(\Xi_{f'}[n-2])
			}
		\end{align*}
		is quasi-equivalent to $ M $. Let $ N $ be another right $ \bm{\Pi}_{n}(A',B',\xi') $-module. Then $ \Hom_{\mathcal{D}(\bm{\Pi}_{n}(A',B',\xi'))}(M,N) $ can be computed as the cone of the following morphism of dg $ k $-modules
		\begin{align*}
			\xymatrix{
				\Hom_{A'}(M,N)\ar[r]&\Hom_{A'}(M\otimes_{A'}\Xi_{f'}[n-2],N)
			}
		\end{align*}	
		Similarly,  $ \Hom_{\mathcal{D}(\bm{\Pi}_{n}(A,B,\xi))}(J'_{*}(M),J'_{*}(M)) $ can be computed analogously.
		Thus, it suffices to check that for all $ N $, the dg functor $ J $ induces the following bijections
		\begin{align*}
			\xymatrix{
				\Hom_{\mathcal{D}(A')}(M,N)\ar[r]&\Hom_{\mathcal{D}(A)}(J_{*}(M),J_{*}(N))
			}
		\end{align*}
		and
		\begin{align*}
			\xymatrix{
				\Hom_{\mathcal{D}(A')}(M\otimes^{\mathbi{L}}_{A'}\Xi_{f'}[n-2],N)\ar[r]&\Hom_{\mathcal{D}(A)}(J_{*}(M)\otimes^{\mathbi{L}}_{A}\Xi_{f}[n-2],J_{*}(N)).
			}
		\end{align*}
		The first bijection follows from the full faithfulness of $ J_{*} $. 
		For the second one, it is enough to show that the following two morphisms are bijections
		\begin{align*}
			\xymatrix{
				\Hom_{\mathcal{D}(A')}(M\otimes^{\mathbi{L}}_{A'}A'^{\vee},N)\ar[r]&\Hom_{\mathcal{D}(A)}(J_{*}(M)\otimes^{\mathbi{L}}_{A}A^{\vee},J_{*}(N))
			}
		\end{align*}
		
		\begin{align*}
			\xymatrix{
				\Hom_{\mathcal{D}(A')}(M\otimes^{\mathbi{L}}_{A'}f'^{*}(B'^{\vee}),N)\ar[r]&\Hom_{\mathcal{D}(A)}(J_{*}(M)\otimes^{\mathbi{L}}_{A}f^{*}(B^{\vee}),J_{*}(N))
			}.
		\end{align*}
		Indeed, this is a consequence of the full faithfulness of $ J_{*} $ and of the following formulas (see~\cite[Proposition 3.10]{kellerDeformedCalabiYau2011}) 
		\begin{align*}
			J^{*}(J_{*}(M)\otimes^{\mathbi{L}}_{A}A^{\vee})\xrightarrow{\sim}M\otimes^{\mathbi{L}}_{A'}A'^{\vee}\,,\, B'^{\vee}=I^{*}(B^{\vee})
		\end{align*}
		and 
		\begin{align*}
			J^{*}(J_{*}M\otimes_{A}^{\mathbi{L}}f^{*}(B^{\vee}))\xrightarrow{\sim}M\otimes_{A'}^{\mathbi{L}}f'^{*}(
			B'^{\vee}).
		\end{align*}
		
		\bigskip	
		If $ J $ is a Morita functor, by part (e) of~\cite[Proposition 3.10]{kellerDeformedCalabiYau2011}, the morphisms $ j_{J}^{\vee} $ and $ k_{I}^{\vee} $ in diagram~\ref{Morita comm} are quasi-isomorphisms. Then we see that $ r:\Xi_{f}\to J_{*}(\Xi_{f'}) $ is a quasi-isomorphism in $ \cc_{pc}(A^{e}) $. It induces quasi-isomorphisms between the tensor powers
		\begin{align*}
			\xymatrix{
				\Xi_{f}^{\cten^{n}}\ar[r]& J_{*}(\Xi_{f'}^{\cten^{n}})
			}
		\end{align*}
		for all $ n\geqslant 1 $. Thus,  $ r $ and $J $ yield a morphism
		\begin{align*}
			\xymatrix{
				J':\bm\Pi_{n}(A,B,\xi)\ar[r]& \bm\Pi_{n}(A',B',\xi')
			}
		\end{align*}	
		which is a quasi-equivalence. Thus, the morphism $ J' $ is a Morita functor.

		%
	\end{proof}
	
	\subsection{Derived equivalences}\label{Subsection:Derived equivalences}
	Let $ (Q,F,W) $ be a finite ice quiver with potential. For a vertex $ i $ of $ Q $, let $ P_{i}=e_{i}\widehat{kQ} $.
	
	Let $ v $ be an unfrozen vertex of $ Q $ such that no loops or 2-cycles are incident with $ v $. We assume that $ v $ is the source of at least one arrow.
Let $ T=\displaystyle\bigoplus_{\substack{j\neq v}}P_{j}\oplus T_{v}$, where $ T_{v} $ is defined by the exact sequence
	\[
	\begin{tikzcd}
		0\arrow[r]& P_{v}\arrow[r]&\displaystyle\bigoplus_{\alpha\in Q_{1}:s(\alpha)=v}P_{j}\arrow[r]& T_{v}\arrow[r]&0,
	\end{tikzcd}
	\]
	where the sum is taken over all arrows $ \alpha:v\to j $ and the corresponding component of the map from $ P_{v} $ to the sum is the left multiplication by $ \alpha $.
	It is easy to see that $ T $ is a tilting object in $ \cd(\widehat{kQ}) $. Let $ A' $ be the pseudocompact algebra $ \End_{\widehat{kQ}}(T) $. 
	
	\begin{Thm}\cite{kellerDerivingDGCategories1994a}\label{Thm:object to derived equivalence}
		Let $ A $ be a dg algebra and $ T $ an object of $ \cd(A) $. Denote by $ A' $ the dg algebra $ \RHom_{A}(T, T) $. Denote by $ \langle T\rangle_{A} $ the thick subcategory of $ \cd(A) $ generated by $ T $. Then the functor $ \RHom_{A}(T,?):\cd(A)\ra\cd(A') $ induces a triangle equivalence
		$$ \RHom_{A}(T,?):\langle T\rangle_{A}\ra\per A' .$$
	\end{Thm}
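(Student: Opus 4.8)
The plan is to realise the functor $\RHom_{A}(T,?)$ and its left adjoint by means of an explicit dg bimodule, to establish full faithfulness of the adjoint on the single generator $A'$ by d\'evissage, and then to conclude by a formal adjunction argument. First I would replace $T$ by a cofibrant dg $A$-module $P=\mathbf{p}T$; by the very definition of $\RHom$ the dg algebra $A'=\RHom_{A}(T,T)$ may then be taken to be the dg endomorphism algebra $B:=\ch om_{A}(P,P)$, so that $P$ becomes a dg $B$-$A$-bimodule, i.e. a dg $A'$-$A$-bimodule, whose underlying right $A$-module is cofibrant and which represents $A'$ as a dg right $A'$-module. The functor in the statement is $F=\RHom_{A}(T,?)\simeq\ch om_{A}(P,?)\colon\cd(A)\ra\cd(A')$, and the tensor--Hom adjunction equips it with a left adjoint $G=?\lten_{A'}P\colon\cd(A')\ra\cd(A)$. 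Both $F$ and $G$ are triangle functors; by construction $G(A')\simeq P\simeq T$ and $F(T)\simeq A'$; and, since $F$ is triangulated and $F(T)=A'$ lies in $\per A'$, the functor $F$ sends the thick subcategory $\langle T\rangle_{A}$ into $\per A'$.

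Next I would check that $G$ restricts to a fully faithful functor on $\per A'$ (the thick subcategory generated by $A'$). The assignments $(X,Y)\mapsto\Hom_{\cd(A')}(X,Y)$ and $(X,Y)\mapsto\Hom_{\cd(A)}(GX,GY)$ are cohomological in each variable and $G$ induces a natural transformation $\phi$ from the first to the second. The class of objects $X$ for which $\phi_{X,\Si^{n}A'}$ is invertible for all $n\in\mathbb{Z}$ is a thick subcategory of $\cd(A')$; it contains $A'$ because at $(A',\Si^{n}A')$ the map $\phi$ is the canonical identification $\Hom_{\cd(A')}(A',\Si^{n}A')=H^{n}(A')=H^{n}(\RHom_{A}(T,T))=\Hom_{\cd(A)}(T,\Si^{n}T)$. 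Hence $\phi_{X,\Si^{n}A'}$ is invertible for every $X\in\per A'$ and every $n$; a second d\'evissage, now fixing such an $X$ and varying the second argument, upgrades this to invertibility of $\phi_{X,Y}$ for all $X,Y\in\per A'$, i.e. to full faithfulness of $G$ on $\per A'$.

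It remains to identify the essential image and to pass to $F$. The essential image of $G\vert_{\per A'}$ is a thick subcategory of $\cd(A)$ containing $G(A')\simeq T$, hence it contains $\langle T\rangle_{A}$; conversely it is contained in $\langle T\rangle_{A}$ because $G$ is triangulated and $T$ generates $\langle T\rangle_{A}$ as a thick subcategory. Thus $G$ induces an equivalence $\per A'\iso\langle T\rangle_{A}$. Since $F$ maps $\langle T\rangle_{A}$ into $\per A'$ and $(G,F)$ is an adjoint pair, the restrictions $G\colon\per A'\ra\langle T\rangle_{A}$ and $F\colon\langle T\rangle_{A}\ra\per A'$ again form an adjoint pair; as $G$ is now a fully faithful and essentially surjective left adjoint, its right adjoint $F\vert_{\langle T\rangle_{A}}$ is a quasi-inverse to it, which is precisely the assertion.

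The step I expect to be the main obstacle is the two-stage d\'evissage of the second paragraph: one must verify carefully that the subcategories on which the comparison map between the two Hom-bifunctors is an isomorphism are genuinely \emph{thick}, i.e. closed under direct summands and not merely under triangles, and one must keep the bimodule bookkeeping coherent --- cofibrancy of $P$ over $A$ so that $F$ computes the correct right derived functor, and the use of suitable cofibrant resolutions in $?\lten_{A'}P$ so that $(G,F)$ is honestly an adjoint pair and $G(A')\simeq T$, $F(T)\simeq A'$ on the nose. These checks are routine, but they are the natural place for resolution and sign errors to hide.
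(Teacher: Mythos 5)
Your argument is correct. Note that the paper itself gives no proof of this statement: it is quoted directly from Keller's \emph{Deriving DG categories}, and your proof --- realising $\RHom_{A}(T,?)$ and its left adjoint $?\lten_{A'}P$ via the dg $A'$-$A$-bimodule $P=\mathbf{p}T$, proving full faithfulness of the adjoint by a two-variable d\'evissage from the generator $A'$, and identifying the essential image with $\langle T\rangle_{A}$ --- is essentially the standard argument from that reference. The bookkeeping points you flag (cofibrancy of $P$ so that $\ch om_{A}(P,?)$ preserves quasi-isomorphisms and the adjunction descends to derived categories, and thickness, i.e.\ closure under summands, of the subcategories on which the comparison map is invertible, which holds since retracts of isomorphisms are isomorphisms and $\per A'$ is idempotent complete) are exactly the ones that need checking, and they all go through.
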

	
     Since $ T $ is a tilting object in $ \cd(\widehat{kQ}) $, by the above Theorem~\ref{Thm:object to derived equivalence}, we have a triangle equivalence
	$$ ?\lten_{A'}T:\per A'\ra\langle T\rangle_{\widehat{kQ}}\simeq\per\,\widehat{kQ} .$$ 
	
	Thus, the pseudocompact algebras $ A' $ and $ \widehat{kQ} $ are Morita equivalent.
	
	Let $ \ca $ be the full dg subcategory of $ \cc_{pc}^{dg}(\widehat{kQ}) $ whose objects are $ T_{v} $ and the $ P_{i} $, $ i\neq v $. Then $ \ca $ is Morita equivalent to $ \widehat{kQ} $. Let $ \ca' $ be the full dg subcategory of $ \cc_{pc}^{dg}(A') $ whose objects are the $ P'_{i}=e_{i}A' $, $ i\in Q_{0} $. Then $ \ca' $ is equal to $ A' $. We define a dg functor
	$$ J:\ca'\ra\ca $$ as follows:
	\begin{itemize}
		\item For $ k\neq v $, we put $ J(P'_{k})=P_{k} $,
		\item For $ k=v $, we put $ J(P'_{v})=T_{v} $. 
	\end{itemize}
	Then $ J $ is a Morita functor. It induces an isomorphism $ HC_{0}(A')\simeq HC_{0}(\widehat{kQ}) $. Let $ W'\in HC_{0}(A') $ be the element corresponding to $ W\in HC_{0}(\widehat{kQ}) $. We set $ \xi=B(W)\in HH_{1}(\widehat{kQ}) $ and $ \xi'=B(W')\in HH_{1}(A') $ where $ B $ is the Connes connecting map.
	
	Let $ \cb $ be the full dg subcategory of $ \cc_{pc}^{dg}(\widehat{kF}) $ whose objects are $ P_{i}=e_{i}\widehat{kF} $, $ i\in F_{0} $. Since the vertex $ v $ is unfrozen, we have dg inclusions $ G:\cb\hookrightarrow\ca $ and $ G':\cb\hookrightarrow\ca' $. Moreover, the following diagram commutes
	\begin{align*}
		\xymatrix{
			&&\ca'\ar"3,3"^{J}\\
			\cb\,\ar@{^{(}->}"1,3"^{G'}\ar@{^{(}->}"3,3"_{G}&&\\
			&&\ca\,.
		}
	\end{align*}
	
	Applying the deformed relative 3-Calabi-Yau completion to the above diagram with respect to the potentials $ W' $ and $ W=J(W') $, we get the following commutative diagram of pseudocompact dg algebras
	\begin{align}\label{diag: J-morita}
		\xymatrix{
			&&\bm\Pi_{3}(\ca',\cb,\xi')\ar[dd]^{J}\\
			\bm{\Pi}_{2}(F)\ar[urr]^{\tilde{G'}}\ar[drr]_{\bm{G}_{rel}}\\
			&&\bm{\Gamma}_{rel}(Q,F,W).
		}
	\end{align}
	By Theorem~\ref{Thm:Derived equivalence}, the dg functor $ J $ is a Morita functor. 
	
	\bigskip

	For each arrow $ \beta:v\ra w $ in $ Q $, we denote by $ \beta^{*} $ the canonical map from $ P_{w} $ to $ T_{v} $. For each pair of arrows $ (\alpha,\beta) $ with $ \alpha:u\to v $ and $ \beta:v\to w $, we have an arrow $ [\beta\alpha]:P_{u}\ra P_{w} $ in the quiver of $ A' $. 
	
	Then for each arrow $ \alpha:u\ra v $, we have a minimal relation in $ A' $ which is given by 
	$$ \sum_{\beta\in Q_{1}:s(\beta)=v}\beta^{*}[\beta\alpha] .$$
	It is easy to see that all minimal relations in $ A' $ come from this way.

    Let $ Q_{A'} $ be the quiver of $ A' $. Let $ Q' $ be the quiver with same vertices as $ Q_{A'} $ and whose arrows are
    \begin{itemize}
    	\item The arrows of $ Q_{A'} $,
    	\item an arrow $ \alpha^{*}:T_{v}\ra P_{u} $ for each arrow $ \alpha:P_{u}\ra P_{v} $ in $ Q_{A'} $.
    \end{itemize}

	 Now we define a potential $ W_{1} $ on $ Q' $ by $ W_{1}=\sum_{\beta\in Q_{1}:s(\beta)=v}\alpha^{*}\beta^{*}[\beta\alpha] $ and a potential $ W_{2} $ by lifting $ W' $ along the surjection $ kQ'\to A' $ taking all arrows $ \alpha^{*} $ to zero. Here $ W'\in HC_{0}(A') $ is the element corresponding to $ W\in HC_{0}(\widehat{kQ}) $ under the isomorphism $ HC_{0}(A')\simeq HC_{0}(\widehat{kQ}) $. Hence $ W_{2} $ is is obtained from $ W $ by replacing each occurrence of a composition $ \beta\alpha $ in a cycle passing through $ v $ by $ [\beta\alpha] $ (see \cite[Section 7.6]{kellerDeformedCalabiYau2011}). 
	
	One can check that the ice quiver with potential $ (Q',F,W_{1}+W_{2}) $ is the pre-mutation (Definition~\ref{Def:Algebraic mutation}) of $ (Q,F,W) $ at the vertex $ v $, i.e. $ \tilde{\mu}_{v}(Q,F,W)=(Q',F,W_{1}+W_{2}) $ (also see~\cite[Subsection 7.6]{kellerDeformedCalabiYau2011}).

	\bigskip
	\begin{Prop}~\cite[Theorem 6.10]{kellerDeformedCalabiYau2011}\label{Prop: CY completion is Ginzburg}
		The pseudocompact dg algebra $ \bm\Pi_{3}(\ca',\cb,W') $ is quasi-isomorphic to the complete relative Ginzburg algebra $ \bm\Gamma_{rel}(Q',F,W_{1}+W_{2}) $.
	\end{Prop}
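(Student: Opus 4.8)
The plan is to adapt the proof of~\cite[Theorem 6.10]{kellerDeformedCalabiYau2011} to the relative pseudocompact setting. The input object $\ca'$ is the pseudocompact algebra $A'=\End_{\widehat{kQ}}(T)$, which by the discussion preceding the statement is presented as the quiver-with-relations algebra $\widehat{kQ_{A'}}/\overline{\langle r_{\alpha}:\alpha\in Q_{1},\,t(\alpha)=v\rangle}$ with $r_{\alpha}=\sum_{\beta\in Q_{1}:s(\beta)=v}\beta^{*}[\beta\alpha]$; since $v$ is unfrozen, the frozen part of $Q_{A'}$ is literally $F$, the inclusion $\cb\hookrightarrow\ca'$ is the evident inclusion $\widehat{kF}\hookrightarrow A'$, and no relation $r_{\alpha}$ lies in $\widehat{kF}$. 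Both $A'$ and $\widehat{kF}$ are smooth (Morita equivalent, resp. equal, to complete path algebras) and connective, so Proposition~\ref{Prop:pseudocompact for smooth dg} lets us run all the homological algebra in $\cd_{pc}$ exactly as in the ordinary derived category.

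First I would produce an explicit minimal cofibrant resolution of $A'$ as a pseudocompact $A'$-bimodule from this presentation, namely $A'\hten_{S}kV_{2}\hten_{S}A'\ra A'\hten_{S}kV_{1}\hten_{S}A'\ra A'\hten_{S}A'$ in degrees $-2,-1,0$, where $V_{1}$ is the set of arrows of $Q_{A'}$ and $V_{2}=\{r_{\alpha}\}$, and likewise the standard length-one resolution of $\widehat{kF}$ over $\widehat{kF}^{e}$. Taking the mapping cone $\cof(m_{f'})$ and its bimodule dual $\Theta_{f'}=\Si^{-1}\Xi_{f'}$, one reads off the free generators of $\Si^{2}\Xi_{f'}$ over $A'^{e}$: a degree-$0$ generator dual to each relation $r_{\alpha}$, a degree-$(-1)$ generator $a^{\vee}$ for each arrow $a$ of $Q_{A'}$, and a degree-$(-2)$ loop $t_{i}$ for each vertex. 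In the relative cone, however, the frozen parts of the two resolutions cancel: the summand indexed by $F_{0}$ kills the loops at frozen vertices, and the summand indexed by $F_{1}$ kills the duals of the frozen arrows. Identifying the degree-$0$ generator dual to $r_{\alpha}$ with the extra arrow $\alpha^{*}:v\to s(\alpha)$ of $Q'$, this shows that the underlying graded pseudocompact algebra of $\bm\Pi_{3}(\ca',\cb,\xi')$ is $\widehat{k\widetilde{Q'}}$, the completed graded path algebra underlying $\bm\Gamma_{rel}(Q',F,W_{1}+W_{2})$.

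It then remains to match the differentials. On the arrows of $Q_{A'}$ and on the $\alpha^{*}$ the deformed Calabi--Yau differential vanishes, as does that of $\bm\Gamma_{rel}$; on $t_{i}$ both are $e_{i}(\sum_{a}[a,a^{\vee}])e_{i}$; and on $a^{\vee}$ the deformed differential is $\partial_{a}W'$, by the defining property of the class $\xi'=B(W')$ (cf.~\cite[Section 7.6]{kellerDeformedCalabiYau2011}). Transporting $W'$ through the Morita equivalence $J$ of~(\ref{diag: J-morita}) and the induced isomorphism $HC_{0}(A')\simeq HC_{0}(\widehat{kQ})$, and unwinding the definitions of $W_{1}$ and $W_{2}$, one computes $\partial_{\alpha^{*}}(W_{1}+W_{2})=\partial_{\alpha^{*}}W_{1}=r_{\alpha}$ --- precisely the resolution of the relation $r_{\alpha}$ contributed by the generator $\alpha^{*}$ --- while $\partial_{a}(W_{1}+W_{2})$ reproduces the remaining components of the Calabi--Yau differential on the $a^{\vee}$. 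Hence the two pseudocompact dg algebras agree up to quasi-isomorphism, and one may cross-check consistency with Proposition~\ref{Prop: reduce CY}, which presents the abstract relative $3$-Calabi--Yau completion in Ginzburg form.

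The main obstacle is the second step: carrying out the degree and sign bookkeeping for $\Xi_{f'}$ and, in particular, verifying rigorously that the cancellations in the relative mapping cone remove exactly the loops at the frozen vertices and the duals of the frozen arrows --- this is where the hypotheses that $v$ is unfrozen and that $G'$ corresponds to $\widehat{kF}\hookrightarrow A'$ are essential --- together with checking that the deformation by $\xi'$, transported along $J$, is precisely the potential $W_{2}$ in the sense of~\cite[Section 7.6]{kellerDeformedCalabiYau2011}. Everything else is a faithful transcription of Keller's argument into the relative, pseudocompact framework.
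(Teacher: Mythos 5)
Your overall strategy is close in spirit to the paper's, but there is a genuine gap at the step where you conclude that ``the underlying graded pseudocompact algebra of $\bm\Pi_{3}(\ca',\cb,\xi')$ is $\widehat{k\widetilde{Q'}}$''. The relative Calabi--Yau completion is by definition a (derived) tensor algebra \emph{over $A'$}, namely $\bL T_{A'}(\Si\Xi_{f'})$; its degree-zero tensor component is $A'$ itself, in which the relations $r_{\alpha}=\sum_{\beta}\beta^{*}[\beta\alpha]$ already vanish. By contrast, $\bm\Gamma_{rel}(Q',F,W_{1}+W_{2})$ is the \emph{free} completed path algebra on $\widetilde{Q'}$: the $r_{\alpha}$ do not hold there, they are only null-homotopic via the degree $-1$ generators $(\alpha^{*})^{\vee}$, which satisfy $d((\alpha^{*})^{\vee})=\partial_{\alpha^{*}}W_{1}=r_{\alpha}$. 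Your list of generators (a degree-$0$ dual of each $r_{\alpha}$, a degree-$(-1)$ dual $a^{\vee}$ of each unfrozen arrow of $Q_{A'}$, a degree-$(-2)$ loop $t_{i}$) omits exactly these $(\alpha^{*})^{\vee}$, so the two underlying graded algebras genuinely differ: one has finitely many extra free degree $-1$ generators, the other has the same number of relations. Hence ``identify the graded algebras, then match the differentials'' cannot be carried out as stated; only a quasi-isomorphism is true, and it needs an actual map together with a proof that it is one. (A smaller slip: for $n=3$ the relevant bimodule is $\Si^{n-2}\Xi_{f'}=\Si\Xi_{f'}$, not $\Si^{2}\Xi_{f'}$; the degrees $0,-1,-2$ you list are the correct ones for $\Si\Xi_{f'}$.)

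The paper closes precisely this gap by inserting an intermediate object: the completed dg path algebra $C$ obtained from $Q_{A'}$ by adjoining a degree $-1$ arrow $\epsilon_{\alpha}$ with $d\epsilon_{\alpha}=r_{\alpha}$ for each $\alpha:u\to v$, so that the projection $\delta:C\to A'$ is a quasi-isomorphism (this uses that the $r_{\alpha}$ form a complete set of minimal relations with no higher syzygies --- the same unstated fact your three-term bimodule resolution of $A'$ relies on). Since the relative Calabi--Yau completion is invariant under quasi-isomorphisms of its input (diagram~(\ref{diag: delta iso})), one computes $\bm\Pi_{3}(C,\cb,\xi_{C})$ instead; and because $C$ is a \emph{free} complete dg path algebra, its completion literally is the free path algebra on the doubled quiver, with $\epsilon_{\alpha}^{\vee}$ of degree $0$ playing the role of $\alpha^{*}$ and $\epsilon_{\alpha}$ that of $(\alpha^{*})^{\vee}$. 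This is exactly where the identification with $\widehat{k\widetilde{Q'}}$ and your missing generators come from. If you insist on working directly with $A'$, you must instead exhibit the surjection $\bm\Gamma_{rel}(Q',F,W_{1}+W_{2})\to T_{A'}(\Si\Xi_{f'})$ killing the $(\alpha^{*})^{\vee}$ and prove it is a quasi-isomorphism (e.g.\ by filtering by tensor degree); either way this extra step is not optional.
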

	\begin{proof}
	We first construct a pseudocompact dg path algebra $ C $ and a quasi-isomorphism $ \delta:C\ra \ca' $. The underling graded path quiver of $ C $ has the same vertices as $ Q_{A'} $ and whose arrows are
	\begin{itemize}
		\item The arrows of $ Q_{A'} $,
		\item an arrow $ \epsilon_{\alpha}:P_{u}\ra T_{v} $ of degree $ -1 $ for each arrow $ \alpha:u\ra v $ in $ Q_{A'} $.
	\end{itemize}
	The differential of $ C $ is defined by $ d_{C}(\epsilon_{\alpha})=\sum_{\beta\in Q_{1}:s(\beta)=v}\beta^{*}[\beta\alpha] $. Then the canonical projection map $ \delta:C\ra A' $ is a quasi-isomorphism. Moreover, we have the following commutative diagram of pseudocompact dg algebras
	\begin{align*}
		\xymatrix{
			&&C\ar"3,3"^{\delta}\\
			\cb\,\ar@{^{(}->}"1,3"^{H}\ar@{^{(}->}"3,3"_{G'}&&\\
			&&\ca'=A'\,.
		}
	\end{align*}
	The quasi-isomorphism $ \delta $ induces an isomorphism $ H\!C_{0}(C)\iso H\!C_{0}(\ca') $. Let $ W_{C}\in HC_{0}(C) $ be the element corresponding to $ W'\in HC_{0}(\ca') $. We set $ \xi_{C}=B(W_{C})\in HH_{1}(C) $.
	
	The deformed relative 3-Calabi--Yau completion of $ \cb\ra\ca' $ with respect to $ \xi'=B(W')\in HH_{1}(A') $ is quasi-isomorphic to the deformed relative 3-Calabi--Yau completion of $ \cb\ra C $ with respect to $ \xi_{C}=B(W_{C})\in HH_{1}(C) $, i.e.\ we have the following commutative diagram of pseudocompact dg algebras
	\begin{align}\label{diag: delta iso}
		\xymatrix{
			&&\bm\Pi_{3}(C,\cb,\xi_{C})\ar[dd]^{\tilde{\delta}}\\
			\bm{\Pi}_{2}(F)\ar[urr]^{\bm{G}'_{rel}}\ar[drr]_{\tilde{G'}}\\
			&&\bm\Pi_{3}(\ca',\cb,\xi'),
		}
	\end{align}
 where $ \tilde{\delta} $ is a quasi-isomorphism. An explicit computation shows that $ \bm\Pi_{3}(C,\cb,\xi_{C}) $ is equal to the complete relative Ginzburg algebra $ \bm\Gamma_{rel}(Q',F,W_{1}+W_{2}) $.

\end{proof}

Combining the results in commutative diagrams (\ref{diag: J-morita}) and (\ref{diag: delta iso}), we get the following commutative diagram of pseudocompact dg algebras
\begin{align}\label{diag: mutation to morita}
	\xymatrix{
		&&\bm\Gamma_{rel}(Q',F,W_{1}+W_{2})\ar[dd]^{J\circ\tilde{\delta}}\\
		\bm{\Pi}_{2}(F)\ar[urr]^{\bm{G}'_{rel}}\ar[drr]_{\bm{G}_{rel}}\\
		&&\bm{\Gamma}_{rel}(Q,F,W),
	}
\end{align}
where the dg functor $ J\circ\tilde{\delta} $ is a Morita functor.

\bigskip
	
Therefore,we have the following theorem which generalizes the result in~\cite[Theorem 3.2]{kellerDerivedEquivalencesMutations2011}. 
\begin{Thm}\label{Thm:equivalence non-frozen}
	Let $ \bm\Gamma_{rel}=\bm\Gamma_{rel}(Q,F,W) $ and $ \bm\Gamma'_{rel}=\bm\Gamma_{rel}(Q',F,W') $ be the complete Ginzburg dg algebras associated to $ (Q,F,W) $ and $ \tilde{\mu}_{v}(Q,F,W)=(Q',F,W'=W_{1}+W_{2})  $ respectively. For a vertex $ i $, let $ \bm\Gamma_{i}=e_{i}\bm\Gamma_{rel} $ and $ \bm\Gamma'_{i}=e_{i}\bm\Gamma'_{rel} $.
		\begin{itemize}
			\item[a)] There is a triangle equivalence
			\begin{align*}
				\xymatrix{
					\Phi_{+}:=J^{*}:\mathcal{D}(\bm\Gamma'_{rel})\ar[r]&\mathcal{D}(\bm\Gamma_{rel}),
				}
			\end{align*}
			which sends the the $ \bm\Gamma'_{j} $ to $ \bm\Gamma_{j} $ for $ j\neq v $ and to the cone over the morphism
			$$ \bm\Gamma_{v}\ra\bigoplus_{\alpha}\bm\Gamma_{t(\alpha)} $$ for $ j=v $, where we have a summand $ \bm\Gamma_{t(\alpha)} $ for each arrow $ \alpha $ of $ Q $ with source $ v $ and the corresponding component of the map is the left multiplication by $ \alpha $. The functor $ \Phi_{+} $ restricts to triangle equivalences from $ \per(\bm\Gamma'_{rel})$ to $ \per(\bm\Gamma_{rel}) $ and from $ \pvd(\bm\Gamma'_{rel}) $ to $ \pvd(\bm\Gamma_{rel}) $.
			\item[b)] Let $ \bm\Gamma^{red}_{rel} $ respectively $ \bm\Gamma'^{red}_{rel} $ be the complete Ginzburg dg algebra associated with the reduction of $ (Q,F,W) $ respectively the reduction $ \mu_{v}(Q,F,W)=(Q'',F'',W'') $ of $ \tilde{\mu}_{v}(Q,F,W) $. Then functor $ \Phi_{+} $ yields a triangle equivalence
			\begin{align*}
				\xymatrix{
					\Phi_{+}^{red}:\mathcal{D}(\bm\Gamma'^{red}_{rel})\ar[r]&\mathcal{D}(\bm\Gamma^{red}_{rel}),
				}
			\end{align*}
			which restricts to triangle equivalences from $ \per(\bm\Gamma'^{red}_{rel}) $ to $ \per(\bm\Gamma^{red}_{rel}) $ and from $ \pvd(\bm\Gamma'^{red}_{rel}) $ to $ \pvd(\bm\Gamma^{red}_{rel}) $.
			\item[c)] The following diagram commutes
			\[
			\begin{tikzcd}
				&&\cd(\bm\Gamma'_{rel})\arrow[dd,"\Phi_{+}"]\\
				\cd(\bm\Pi_{2}(F))\arrow[urr,"(\bm{G}'_{rel})^{*}"]\arrow[drr,swap,"(\bm{G}_{rel})^{*}"]&&\\
				&&\cd(\bm\Gamma_{rel}).
			\end{tikzcd}
			\]
			\item[d)] Since the frozen parts of $ \mu_{v}(Q,F,W)=(Q'',F'',W'') $ and of $ \tilde{\mu}_{v}(Q,F,W)=(Q',F,W') $ 
			only differ in the directions of the frozen arrows, we have a canonical isomorphism between $ \bm\Pi_{2}(kF) $ and $ \bm\Pi_{2}(kF'') $. It induces a canonical triangle equivalence
			$$ \mathrm{can}:\cd(\bm\Pi_{2}(F))\ra\cd(\bm\Pi_{2}(F'')) .$$ Moreover, the following diagram commutes up to isomorphism
			\[
			\begin{tikzcd}
				\cd(\bm\Pi_{2}(F''))\arrow[rr,"(\bm{G}'_{rel})^{*}"]\arrow[dd,"\mathrm{can}^{-1}",swap]&&\cd(\bm\Gamma'^{red}_{rel})\arrow[dd,"\Phi_{+}^{red}"]\\
				\\
				\cd(\bm\Pi_{2}(F))\arrow[rr,swap,"(\bm{G}_{rel})^{*}"]&&\cd(\bm\Gamma^{red}_{rel}).
			\end{tikzcd}
			\]
		\end{itemize}	
	\end{Thm}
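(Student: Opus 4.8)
The strategy is to extract all four parts from the commutative diagram~(\ref{diag: mutation to morita}) of pseudocompact dg algebras, in which $J\circ\tilde\delta\colon\bm\Gamma_{rel}(Q',F,W_1+W_2)\ra\bm\Gamma_{rel}(Q,F,W)$ is a Morita functor (by Proposition~\ref{Prop: CY completion is Ginzburg} together with Theorem~\ref{Thm:Derived equivalence}) that is compatible with the Ginzburg functors $\bm G'_{rel}$ and $\bm G_{rel}$. For part a) I would set $\Phi_{+}:=(J\circ\tilde\delta)^{*}$ (this is the $J^{*}$ of the statement, up to the quasi-isomorphism $\tilde\delta$). Being induced by a Morita functor, $\Phi_{+}$ is a triangle equivalence $\cd(\bm\Gamma'_{rel})\ra\cd(\bm\Gamma_{rel})$; as any triangle equivalence it preserves compact objects, hence restricts to $\per(\bm\Gamma'_{rel})\iso\per(\bm\Gamma_{rel})$, and then also to $\pvd(\bm\Gamma'_{rel})\iso\pvd(\bm\Gamma_{rel})$, the latter subcategory being cut out of the derived category by a finiteness condition against all perfect objects (the passage between the pseudocompact and the ordinary derived categories being supplied by the comparison results of Section~\ref{Section2}). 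To identify $\Phi_{+}(\bm\Gamma'_{j})$ I would trace the idempotent $e_{j}$ through the constructions of Subsection~\ref{Subsection:Derived equivalences}: before Calabi--Yau completion the Morita functor $J\colon\ca'\to\ca$ sends the representable $P'_{j}=e_{j}A'$ to $P_{j}$ for $j\neq v$ and to the summand $T_{v}=\cone\bigl(P_{v}\ra\bigoplus_{s(\alpha)=v}P_{t(\alpha)}\bigr)$ of the tilting object $T$ of $\cd(\widehat{kQ})$ for $j=v$ (Theorem~\ref{Thm:object to derived equivalence}); and the deformed relative Calabi--Yau completion is built by adjoining the new arrows $a^{\vee},t_{i}$ without altering the existing ones, so it carries representable modules to representable modules and commutes with the cone. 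This gives $\Phi_{+}(\bm\Gamma'_{j})\simeq\bm\Gamma_{j}$ for $j\neq v$ and $\Phi_{+}(\bm\Gamma'_{v})\simeq\cone\bigl(\bm\Gamma_{v}\ra\bigoplus_{\alpha}\bm\Gamma_{t(\alpha)}\bigr)$ with the component at an arrow $\alpha$ of $Q$ with source $v$ being left multiplication by $\alpha$.

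Part c) is then immediate: applying $(-)^{*}$ to the homotopy commutativity $(J\circ\tilde\delta)\circ\bm G'_{rel}\simeq\bm G_{rel}$ of diagram~(\ref{diag: mutation to morita}) gives $\Phi_{+}\circ(\bm G'_{rel})^{*}=\bigl((J\circ\tilde\delta)\circ\bm G'_{rel}\bigr)^{*}\simeq(\bm G_{rel})^{*}$, which is exactly the commutativity of the triangle. For part b), recall from Section~\ref{Section4} the quasi-isomorphisms of complete relative Ginzburg algebras $\bm\Gamma_{rel}(Q,F,W)\to\bm\Gamma^{red}_{rel}$ and $\bm\Gamma_{rel}(Q',F,W_1+W_2)\to\bm\Gamma'^{red}_{rel}$, the second one using that the reduction of $\tilde\mu_{v}(Q,F,W)$ is precisely $\mu_{v}(Q,F,W)=(Q'',F'',W'')$; since all four dg algebras are connective and smooth, Proposition~\ref{Prop:pseudocompact for smooth dg}(e) (with its ordinary-derived-category counterpart) shows that these quasi-isomorphisms induce triangle equivalences on $\cd$, $\per$ and $\pvd$. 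I would then define $\Phi_{+}^{red}$ as the composite $\cd(\bm\Gamma'^{red}_{rel})\iso\cd(\bm\Gamma'_{rel})\xrightarrow{\Phi_{+}}\cd(\bm\Gamma_{rel})\iso\cd(\bm\Gamma^{red}_{rel})$; it is a triangle equivalence, and it restricts to $\per$ and to $\pvd$ because each of its three factors does.

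Finally, part d) is the reduced analogue of c). The complete derived preprojective algebra $\bm\Pi_{2}(F)$ does not see the orientation of the arrows of $F$ (it adjoins the reversed arrow $\tilde a$ in all cases), so the fact that the frozen subquivers of $\tilde\mu_{v}(Q,F,W)$ and $\mu_{v}(Q,F,W)$ differ only in the directions of their frozen arrows produces the canonical identification $\bm\Pi_{2}(kF)\cong\bm\Pi_{2}(kF'')$ and hence the equivalence $\mathrm{can}$. The remaining ingredient is that, through the reduction quasi-isomorphism of Section~\ref{Section4}, every Ginzburg functor agrees up to homotopy --- and, where the frozen quiver changes, up to $\mathrm{can}$ --- with the Ginzburg functor of the reduced ice quiver with potential; pasting this square below the triangle of part c) and using the definition of $\Phi_{+}^{red}$ from b) then gives the square of d), commuting up to isomorphism. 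I expect this last compatibility to be the only genuinely delicate point of the proof: it amounts to matching the explicit reduction quasi-isomorphisms built in Section~\ref{Section4} against the explicit formulas of Definition~\ref{Def:Relative Ginzburg algebra} and Proposition~\ref{Prop: reduce CY} for the Ginzburg functors on the generators $r_{i}$, $a$, $\tilde a$, $a^{\vee}$, $t_{i}$. Everything else is a formal consequence of diagram~(\ref{diag: mutation to morita}) and the general machinery of Sections~\ref{Section2} and~\ref{Section5}.
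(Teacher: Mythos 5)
Your proposal is correct and follows essentially the same route as the paper: the theorem is deduced from the commutative diagram~(\ref{diag: mutation to morita}), with $\Phi_{+}$ induced by the Morita functor $J\circ\tilde\delta$ (combining Theorem~\ref{Thm:Derived equivalence} and Proposition~\ref{Prop: CY completion is Ginzburg}), the images of the $\bm\Gamma'_{j}$ read off from the tilting module $T$, part b) obtained via the reduction quasi-isomorphism of Section~\ref{Section4}, and parts c) and d) from the compatibility with the Ginzburg functors. This matches the paper's argument in Subsection~\ref{Subsection:Derived equivalences}.
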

	\begin{Rem}
		If $ v $ is the target of at least one arrow, we can use $ T'=\oplus_{j\neq v}P_{j}\oplus T'_{v} $, where $ T'_{v} $ is defined by the exact sequence
		\[
		\begin{tikzcd}
			0\arrow[r]&T'_{v}\arrow[r]&\displaystyle\bigoplus_{\beta\in Q_{1};t(\beta)=v}P_{s(\beta)}\arrow[r]&P_{v}\arrow[r]&0,
		\end{tikzcd}
		\]
		where the sum is taken over all arrows $ \beta:j\to v $ and the corresponding component of the map from $ P_{j} $ to $ P_{v} $ is the left multiplication by $ \beta $. Then $ T' $ is also a tilting object in $ \cd(\widehat{kQ}) $.
		
		There is also a triangle equivalence $ \Phi_{-}:\cd(\bm\Gamma'_{rel}) \ra\cd(\bm\Gamma_{rel}) $ which, for $ j\neq v $, sends the $ \bm\Gamma'_{j} $ to $ \bm\Gamma_{j} $ and for $ j=v $, to the shifted cone 
		$$ \Si^{-1}(\bigoplus_{\beta\in Q_{1};t(\beta)=v}\bm\Gamma_{s(\beta)}\ra \bm\Gamma_{v}),$$ where we have a summand $ \bm\Gamma_{s(\beta)} $ for each arrow $ \beta $ of $ Q $ with target $ i $ and the corresponding component of the morphism is left multiplication by $ \beta $. Moreover, the two equivalences $ \Phi_{+} $ and $ \Phi_{-} $ are related by the twist functor $ t_{S_{v}} $ with with respect to the 3-spherical object $ S_{v} $, i.e. $ \Phi_{-}=t_{S_{v}}\circ\Phi_{+} $. For each object $ X $ in $ \cd(\bm{\Gamma}_{rel}) $, the object $ t_{S_{v}}(X) $ is given by the following triangle
		$$ \RHom(S_{v},X)\ten_{k}S_{v}\ra X\ra t_{S_{v}}(X)\ra\Si\RHom(S_{v},X)\ten_{k}S_{v} .$$
		
	\end{Rem}
	
	\begin{Rem}
		For an oriented marked surface with ideal triangulation $ (\mathbf{S},\ct) $, we denote by $ \cg_{\ct} $ the associated relative Ginzburg algebra (see~\cite{christGinzburgAlgebrasTriangulated2021}). Let $ \mathbf{S} $ be an oriented marked surface with two ideal triangulation $ \ct $, $ \ct' $ related by a flip of an edge $ e $ of $ \ct $. Recently, Merlin Christ~\cite[Theorem 3]{christGinzburgAlgebrasTriangulated2021} has proved that there exists an equivalence of stable $ \infty $-categories
		$$ \mu_{e}:\cd(\cg_{\ct})\simeq\cd(\cg_{\ct'}) .$$ 
		One can extend the results of~\cite{labardini-fragosoQuiversPotentialsAssociated2009} relating flips of ideal triangulations and mutations of quivers
		with potential to the case of ice quivers with potential. In the absence of punctures, we may then give an alternative approach to Christ's result by using Theorem~\ref{Thm:equivalence non-frozen}.
	\end{Rem}
	
	\begin{Def}\rm\label{Def:boundary dg algebra}
		Let $ (Q,F,W) $ be a ice quiver with potential. The \emph{boundary dg algebra} is defined to be the dg subalgebra of $ \bm{\Gamma}_{rel}(Q,F,W) $ $$ \Bd(Q,F,W)=\mathrm{REnd}_{\bm\Gamma_{rel}(Q,F,W)}(\bm{G}_{rel}^{*}(\bm{\Pi}_{2}(F)))\simeq e_{F}\bm\Gamma_{rel}(Q,F,W)e_{F} ,$$ where $ e_{F}=\sum_{i\in F}e_{i} $ is the sum of idempotents corresponding to the frozen vertices.
	\end{Def}
	
	\begin{Cor}\rm\label{Cor:The boundary dg algebra is invariant under mutations}
		The boundary dg algebra is invariant under the mutations at unfrozen vertices. Moreover, if $ \bm\Gamma_{rel}(Q,F,W) $ is concentrated in degree 0, then the boundary dg algebra $ \Bd(Q,F,W) $ is also concentrated in degree 0.
	\end{Cor}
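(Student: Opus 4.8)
The plan is to obtain the invariance as a formal consequence of Theorem~\ref{Thm:equivalence non-frozen}, using that the boundary dg algebra is the derived endomorphism algebra of the distinguished object $(\bm{G}_{rel})^{*}(\bm\Pi_{2}(F))$, which the equivalences $\Phi_{+}$ (and $\Phi_{+}^{red}$) of that theorem respect. First I would record the two descriptions of $\Bd$ from Definition~\ref{Def:boundary dg algebra}: since $\bm{G}_{rel}$ sends each frozen idempotent $e_{i}$ ($i\in F_{0}$) to $e_{i}$ and maps $\bm\Pi_{2}(F)$ into $e_{F}\bm\Gamma_{rel}(Q,F,W)e_{F}$, extension of scalars carries the free $\bm\Pi_{2}(F)$-module to the cofibrant $\bm\Gamma_{rel}$-module $e_{F}\bm\Gamma_{rel}(Q,F,W)$, so $\mathrm{REnd}_{\bm\Gamma_{rel}}\big((\bm{G}_{rel})^{*}(\bm\Pi_{2}(F))\big)\simeq e_{F}\bm\Gamma_{rel}(Q,F,W)e_{F}$ by the Yoneda lemma.

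Since the mutation $\mu_{v}$ is ``pre-mutation followed by reduction'', I would treat the two operations separately. For the pre-mutation $(Q',F,W')=\tilde{\mu}_{v}(Q,F,W)$, part~c) of Theorem~\ref{Thm:equivalence non-frozen} gives a commuting triangle; evaluating it at the free module $\bm\Pi_{2}(F)\in\cd(\bm\Pi_{2}(F))$ yields $\Phi_{+}\big((\bm{G}'_{rel})^{*}(\bm\Pi_{2}(F))\big)\cong(\bm{G}_{rel})^{*}(\bm\Pi_{2}(F))$ in $\cd(\bm\Gamma_{rel})$, and applying $\mathrm{REnd}$ gives $\Bd(Q',F,W')\simeq\Bd(Q,F,W)$. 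For the reduction, the Lemma on reduction of complete relative Ginzburg algebras proved just before Subsection~\ref{subsection:Cofibrant resolutions of simples over a tensor algebra} provides a quasi-isomorphism of dg algebras $\bm\Gamma_{rel}(\tilde{\mu}_{v}(Q,F,W))\to\bm\Gamma_{rel}(\mu_{v}(Q,F,W))$ which is the identity on vertices, and reduction does not change the frozen vertex set; conjugating by $e_{F}$ and using that $M\mapsto e_{F}Me_{F}$ is exact — so $H^{*}(e_{F}Me_{F})=e_{F}H^{*}(M)e_{F}$ — shows this restricts to a quasi-isomorphism $\Bd(\tilde{\mu}_{v}(Q,F,W))\to\Bd(\mu_{v}(Q,F,W))$. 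Chaining the two gives $\Bd(\mu_{v}(Q,F,W))\simeq\Bd(Q,F,W)$. (One can instead read this off directly from part~d), the canonical identification $\bm\Pi_{2}(F)\cong\bm\Pi_{2}(F'')$ being absorbed into $\mathrm{can}$.) The ``moreover'' clause is then immediate from the same exactness: $H^{p}(\Bd(Q,F,W))=e_{F}H^{p}(\bm\Gamma_{rel}(Q,F,W))e_{F}$, so if $\bm\Gamma_{rel}(Q,F,W)$ has cohomology concentrated in degree $0$ then so does $\Bd(Q,F,W)$, with $H^{0}(\Bd(Q,F,W))=e_{F}J(Q,F,W)e_{F}$.

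The step I expect to demand the most care is the passage from ``$\Phi_{+}$ is a triangle equivalence sending $(\bm{G}'_{rel})^{*}(\bm\Pi_{2}(F))$ to $(\bm{G}_{rel})^{*}(\bm\Pi_{2}(F))$'' to ``$\Phi_{+}$ induces a quasi-isomorphism of the corresponding derived endomorphism \emph{dg} algebras''. An abstract triangle equivalence would only identify the graded $\Ext$-algebras; what rescues the statement is that $\Phi_{+}$ is induced by the Morita functor $J\circ\tilde{\delta}$ of diagram~(\ref{diag: mutation to morita}), i.e. it is the derived tensor functor $?\lcten_{\bm\Gamma'_{rel}}\bm\Gamma_{rel}$ attached to an invertible bimodule (and $\Phi_{+}^{red}$ is obtained from it by further composing with equivalences induced by quasi-isomorphisms of pseudocompact dg algebras), so it is a derived Morita equivalence and therefore does preserve derived endomorphism dg algebras up to quasi-isomorphism. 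I would make sure to invoke this enhanced nature of $\Phi_{+}$ explicitly, since it is precisely what upgrades ``the boundary cohomology algebra is mutation-invariant'' to the sought statement about dg algebras.
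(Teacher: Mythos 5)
Your proposal is correct and follows the same route as the paper, whose entire proof is the one-line observation that the claim follows from Definition~\ref{Def:boundary dg algebra} and part~c) of Theorem~\ref{Thm:equivalence non-frozen}. The extra details you supply — handling the reduction step separately, the exactness of $M\mapsto e_{F}Me_{F}$ for the ``moreover'' clause, and especially the remark that one needs $\Phi_{+}$ to be a derived Morita equivalence (not merely a triangle equivalence) to get a quasi-isomorphism of dg endomorphism algebras rather than just of graded $\Ext$-algebras — are all points the paper leaves implicit, and your treatment of them is sound.
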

	\begin{proof}
		This follows from the Definition of boundary dg algebra and part (c) of Theorem~\ref{Thm:equivalence non-frozen}.
		
	\end{proof}
	
	\subsection{Stability under mutation of relative Ginzburg algebras concentrated in degree 0}
	Let $ (Q,F,W) $ be an ice quiver with potential. Let $ \mathbf{\Gamma}_{rel} $ be the complete Ginzburg algebra associated with $ (Q,F,W) $. For each vertex $ i $ of $ Q $, we denote by $ \mathbf{\Gamma}_{i} $ the cofibrant dg $ \mathbf{\Gamma}_{rel} $-module associated with $ i $. Recall that the relative Jacobian algebra $ J_{rel}=J(Q,F,W) $ is the 0-th homology of $ \mathbf{\Gamma}_{rel}(Q,F,W) $.
	
	For each vertex $ i $ of $ Q $, we denote by $ S_{i} $ the associated simple module and by $ P_{i}=e_{i}J_{rel} $ its projective cover. Let $ v $ be an unfrozen vertex. Consider the complex $ T'_{rel} $ which is the sum of the $ P_{j} $, $ j \neq v $, concentrated in degree 0 and of the complex
	$$ 0\ra P_{v}\xrightarrow{c}\bigoplus_{\alpha\in Q_{1}:s(\alpha)=v}P_{t(\alpha)}\ra0 ,$$ where $ P_{v} $ is in degree -1 and the components of $ c $ are the left multiplications by the corresponding arrows.
	
	\begin{Thm}\cite[Theorem 6.2]{kellerDerivedEquivalencesMutations2011}
		Suppose that the complete Ginzburg algebra $ \bm\Gamma_{rel}=\bm\Gamma_{rel}(Q,F,W) $ has its homology concentrated in degree $ 0 $. Then $ T'_{rel} $ is a tilting object in the perfect derived category $ \per(J_{rel})\simeq\per(\bm\Gamma_{rel}) $. Thus, the complete relative Ginzburg algebra $ \bm\Gamma'_{rel} $ associated with $ \mu_{v}(Q,F,W) $ still has its homology concentrated in degree 0 and then $ \Bd(\mu_{v}(Q,F,W)) $ is concentrated in degree 0.
		
	\end{Thm}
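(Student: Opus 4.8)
The strategy is to transport a tilting object along the equivalence $\Phi_{+}$ of Theorem~\ref{Thm:equivalence non-frozen} and to rerun the computation of \cite[Theorem~6.2]{kellerDerivedEquivalencesMutations2011} in the relative, pseudocompact setting. Since by hypothesis $\bm\Gamma_{rel}$ is connective with homology concentrated in degree $0$, the canonical projection $\bm\Gamma_{rel}\to H^{0}(\bm\Gamma_{rel})=J_{rel}$ is a quasi-isomorphism of pseudocompact dg algebras, and by part e) of Proposition~\ref{Prop:pseudocompact for smooth dg} it induces a triangle equivalence $\per\bm\Gamma_{rel}\iso\per J_{rel}$ sending $\bm\Gamma_{i}=e_{i}\bm\Gamma_{rel}$ to $P_{i}=e_{i}J_{rel}$ and, the components of the relevant morphisms being left multiplications by arrows, sending $\cone\bigl(\bm\Gamma_{v}\to\bigoplus_{\alpha}\bm\Gamma_{t(\alpha)}\bigr)$ to $T'_{v}$. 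Write $\bm\Gamma^{\mathrm{pre}}_{rel}=\bm\Gamma_{rel}(Q',F,W')$ for the complete relative Ginzburg algebra of the pre-mutation $\tilde\mu_{v}(Q,F,W)=(Q',F,W')$ (the ``$\bm\Gamma'_{rel}$'' of Theorem~\ref{Thm:equivalence non-frozen}). Composing the equivalence above with Theorem~\ref{Thm:equivalence non-frozen}a) identifies $\Phi_{+}(\bm\Gamma^{\mathrm{pre}}_{rel})$ with $T'_{rel}$ inside $\per J_{rel}$; since $\Phi_{+}$ is induced by the Morita functor of diagram~(\ref{diag: mutation to morita}) and $\bm\Gamma^{\mathrm{pre}}_{rel}$ is cofibrant over itself, this yields
\[
\RHom_{J_{rel}}(T'_{rel},T'_{rel})\ \simeq\ \RHom_{\bm\Gamma^{\mathrm{pre}}_{rel}}(\bm\Gamma^{\mathrm{pre}}_{rel},\bm\Gamma^{\mathrm{pre}}_{rel})\ \simeq\ \bm\Gamma^{\mathrm{pre}}_{rel}
\]
as dg algebras, so in particular the two sides have the same cohomology.

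The easy half of ``tilting'' follows at once. As $\Phi_{+}$ restricts to an equivalence $\per\bm\Gamma^{\mathrm{pre}}_{rel}\to\per\bm\Gamma_{rel}$, the object $T'_{rel}=\Phi_{+}(\bm\Gamma^{\mathrm{pre}}_{rel})$ generates $\per\bm\Gamma_{rel}\simeq\per J_{rel}$ as a thick subcategory (equivalently: no loop is incident with $v$, so $t(\alpha)\neq v$ for every arrow $\alpha$ out of $v$, and the defining triangle of $T'_{v}$ then puts $P_{v}$ in $\thick(T'_{rel})$, whence $\thick(T'_{rel})=\thick(\bigoplus_{i}P_{i})=\per J_{rel}$). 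Moreover $\bm\Gamma^{\mathrm{pre}}_{rel}$, being a complete relative Ginzburg algebra, is concentrated in non-positive degrees, so $\Hom_{\cd(J_{rel})}(T'_{rel},\Si^{n}T'_{rel})=H^{n}(\bm\Gamma^{\mathrm{pre}}_{rel})=0$ for $n>0$; and since $T'_{rel}$ is a complex of projective $J_{rel}$-modules concentrated in cohomological degrees $-1$ and $0$, this $\Hom$ also vanishes automatically for $n\le-2$. Hence the only point that remains, and the crux of the proof, is $\Hom_{\cd(J_{rel})}(T'_{rel},\Si^{-1}T'_{rel})=0$.

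Decomposing $T'_{rel}=\bigoplus_{j\neq v}P_{j}\oplus T'_{v}$ and using that the $P_{j}$ are projective while $T'_{v}$ is the explicit two-term complex $P_{v}\xrightarrow{c}\bigoplus_{\alpha}P_{t(\alpha)}$, a direct computation reduces this vanishing to the single claim that, for every vertex $i\neq v$, the $k$-linear map
\[
\lambda_{i}\colon\ e_{v}J_{rel}e_{i}\ \longrightarrow\ \bigoplus_{\alpha\in Q_{1}:\,s(\alpha)=v}e_{t(\alpha)}J_{rel}e_{i},\qquad x\longmapsto(\alpha x)_{\alpha},
\]
is injective: indeed $\Hom(P_{i},\Si^{-1}T'_{v})\cong\ker\lambda_{i}$, the term $\Hom(T'_{v},\Si^{-1}T'_{v})$ embeds through the defining triangle of $T'_{v}$ into $\bigoplus_{\alpha}\ker\lambda_{t(\alpha)}$, and the remaining summands $\Hom(T'_{v},\Si^{-1}P_{j})$, $\Hom(P_{i},\Si^{-1}P_{j})$ vanish for degree reasons. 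The injectivity of $\lambda_{i}$ is exactly where the hypothesis $H^{<0}(\bm\Gamma_{rel})=0$ is consumed: if $x\in\ker\lambda_{i}$, lift it to $\bm\Gamma_{rel}$; the relations $\alpha x\in\overline{\langle\partial_{a}W:a\in Q_{1}\setminus F_{1}\rangle}$ exhibit each $\alpha x$ as the differential of a degree $-1$ element assembled from the generators $a^{\vee}$, and combining these with the degree $-2$ loop $t_{v}$ (whose differential is $e_{v}(\sum_{a}[a,a^{\vee}])e_{v}$) produces a degree $-2$ cocycle of $\bm\Gamma_{rel}$; by hypothesis it is a coboundary, and unwinding this coboundary relation forces $x\in\overline{\langle\partial_{a}W:a\in Q_{1}\setminus F_{1}\rangle}$, i.e.\ $x=0$ in $J_{rel}$. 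This is precisely the Koszul-type acyclicity argument of \cite[Theorem~6.2]{kellerDerivedEquivalencesMutations2011}; it is unaffected by the frozen subquiver because $v$ is unfrozen, so that $\bm\Pi_{2}(F)$ and the frozen arrows never enter this local computation. I expect verifying that this acyclicity survives verbatim in the relative, pseudocompact framework to be the only genuine obstacle.

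Granting the claim, $T'_{rel}$ has vanishing self-extensions in all nonzero degrees and generates $\per J_{rel}$, hence is a tilting object; Theorem~\ref{Thm:object to derived equivalence} then gives $\per\bm\Gamma_{rel}\simeq\per J_{rel}\simeq\per\End_{J_{rel}}(T'_{rel})$ with $\End_{J_{rel}}(T'_{rel})=\RHom_{J_{rel}}(T'_{rel},T'_{rel})$ concentrated in degree $0$. By the first paragraph, $\bm\Gamma^{\mathrm{pre}}_{rel}=\bm\Gamma_{rel}(\tilde\mu_{v}(Q,F,W))$ therefore has homology concentrated in degree $0$; since there is a quasi-isomorphism from $\bm\Gamma_{rel}(\tilde\mu_{v}(Q,F,W))$ to the complete relative Ginzburg algebra $\bm\Gamma'_{rel}$ of its reduction $\mu_{v}(Q,F,W)$ (the reduction lemma for relative Ginzburg algebras established above), $\bm\Gamma'_{rel}$ is likewise concentrated in degree $0$. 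Finally, applying the last assertion of Corollary~\ref{Cor:The boundary dg algebra is invariant under mutations} to the ice quiver with potential $\mu_{v}(Q,F,W)$ shows that $\Bd(\mu_{v}(Q,F,W))$ is concentrated in degree $0$, which completes the proof.
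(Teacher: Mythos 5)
Your overall architecture runs parallel to the paper's: both arguments reduce the tilting property of $T'_{rel}$ to a single nontrivial vanishing, namely $\Hom_{\cd(J_{rel})}(T'_{rel},\Si^{-1}T'_{rel})=0$, which is in turn the injectivity of $c\colon P_{v}\to\bigoplus_{\alpha:s(\alpha)=v}P_{t(\alpha)}$ (your maps $\lambda_{i}$ are exactly the restrictions of $c$ to the spaces $e_{v}J_{rel}e_{i}$; the paper reaches the same point by invoking the tilting criterion of \cite[Proposition 6.5]{kellerDerivedEquivalencesMutations2011}, whose conditions 1) and 2) are the easy parts you also dispose of). Your endgame --- connectivity of $\bm\Gamma_{rel}(\tilde\mu_{v}(Q,F,W))$ via $\RHom_{J_{rel}}(T'_{rel},T'_{rel})$, passage to the reduction by the quasi-isomorphism lemma, then Corollary~\ref{Cor:The boundary dg algebra is invariant under mutations} --- is also the intended one.

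The genuine gap is exactly at the step you flag yourself: the injectivity of $c$. Your sketch --- lift $x\in\ker\lambda_{i}$, write each $\alpha x$ as $d(y_{\alpha})$, and ``combine with $t_{v}$ to produce a degree $-2$ cocycle'' --- does not close as stated. The natural candidate $u=t_{v}x\pm\sum_{\alpha}\alpha^{\vee}y_{\alpha}$ satisfies, up to signs,
$$ d(u)=\Big(\sum_{a\in Q_{1}:\,t(a)=v}aa^{\vee}\Big)x\;\pm\sum_{\alpha}(\partial_{\alpha}W)\,y_{\alpha}\;+\cdots, $$
and the first sum, coming from the arrows \emph{into} $v$, does not vanish; so no degree $-2$ cocycle is produced and the hypothesis $H^{-2}(\bm\Gamma_{rel})=0$ cannot be applied. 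Completing this bookkeeping would amount to reproving exactness of the Ginzburg resolution at the relevant spot. The paper sidesteps the computation entirely: since $\bm\Gamma_{rel}\to J_{rel}$ is a quasi-isomorphism, $?\lten_{\bm\Gamma_{rel}}J_{rel}$ is an equivalence $\per(\bm\Gamma_{rel})\to\per(J_{rel})$, and applying it to the explicit cofibrant resolution $P_{S_{v}}$ of $S_{v}$ from Subsection~\ref{subsection:Cofibrant resolutions of simples over a tensor algebra} (whose graded pieces at the unfrozen vertex $v$ are indexed by the arrows into $v$, the reversed arrows $b^{\vee}$ with $s(b)=v$, and the loop $t_{v}$) yields a complex of projective $J_{rel}$-modules
$$ 0\to P_{v}\xrightarrow{c}\bigoplus_{b:s(b)=v}P_{t(b)}\to\bigoplus_{a:t(a)=v}P_{s(a)}\to P_{v}\to S_{v}\to 0 $$
quasi-isomorphic to $S_{v}$, hence exact; injectivity of $c$ is read off at the left end, and no separate verification in the relative pseudocompact setting is needed. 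Replace your cocycle sketch by this argument and the rest of your proof goes through.
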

	\begin{proof}
		The proof follows the lines of that of~\cite[Theorem 6.2]{kellerDerivedEquivalencesMutations2011}. There is a decomposition of $ J_{rel} $ as right $ J_{rel} $-module
		$$ J_{rel}=P_{v}\oplus\bigoplus_{i\in Q_{0}:i\neq v}P_{i} .$$
		By the construction of $ T'_{rel} $, we have a map $ c:P_{v}\ra\bigoplus_{\alpha\in Q_{1}:s(\alpha)=v}P_{t(\alpha)} $. We set $ B=\bigoplus_{\alpha\in Q_{1};s(\alpha)=v}P_{t(\alpha)} $ and $ T_{1}=\bigoplus_{i\in Q_{0}:i\neq v}P_{i} $. Then by~\cite[Proposition 6.5]{kellerDerivedEquivalencesMutations2011}, we have to check that the map $ c:P_{v}\ra B $ satisfies the following conditions
		\begin{itemize}
			\item[1)] $ B $ belongs to $ \add(T_{1}) $;
			\item[2)] the map $ c^{*}:\per(\bm\Gamma_{rel})(B,T_{1})\ra\per(\bm\Gamma_{rel})(P_{v},T_{1}) $ is surjective and
			\item[3)] the map $ c_{*}:\per(\bm\Gamma_{rel})(T_{1},P_{v})\ra\per(\bm\Gamma_{rel})(T_{1},B) $ is injective.
		\end{itemize}
		Condition 1) holds since $ B $ belongs to $ \add(T_{1}) $. Condition 2) holds since $ c: P_{v}\ra B $ is a left $ \add(T_{1}) $-approximation. Finally, in order to show condition 3), it is enough to show that $ c $ is injective. Since the homology of $ \Gamma_{rel} $ is concentrated in degree 0, the functor $ ?\lten_{\bm\Gamma_{rel}}J_{rel} $ is an equivalence from $ \per(\bm\Gamma_{rel}) $ to $ \per(J_{rel}) $ whose inverse is given by the restriction along the projection morphism $ \bm\Gamma_{rel}\ra J_{rel} $.
		
		By Subsection~\ref{subsection:Cofibrant resolutions of simples over a tensor algebra}, we have the following exact sequence in $ \cc(\bm\Gamma_{rel}) $
		\[
		\begin{tikzcd}
			0\arrow[r]&\ker(\pi)\arrow[r]&\bm\Gamma_{v}\arrow[r]&S_{v}\arrow[r]&0,
		\end{tikzcd}
		\]
		where $ \pi $ is the canonical projection from $ \bm\Gamma_{v} $ to $ S_{v} $. By the definition of relative Ginzburg algebra (see Definition~\ref{Def:Relative Ginzburg algebra}), the graded $ \bm\Gamma_{rel} $-module $ \ker(\pi) $ decomposes as
		\begin{equation*}
			\begin{split}
					\ker(\pi)=&\bigoplus_{\rho\in Q_{1}:t(\rho)=v}\rho \bm{\Gamma}_{s(\rho)}\\
					=&\bigoplus_{a\in Q_{1}:t(a)=v}a \bm{\Gamma}_{s(a)}\oplus\bigoplus_{b\in Q_{1}:s(b)=v}b^{\vee} \bm{\Gamma}_{t(b)}\oplus t_{v}\bm{\Gamma}_{v}
			\end{split}
		\end{equation*} 
		with the induced differential. Let $ P_{S_{v}} $ be the mapping cone of $ \ker(\pi)\ra\bm\Gamma_{v} $. Then the canonical map $ P_{S_{v}}\ra S_{v} $ is a cofibrant replacement of $ S_{v} $.
		
		Applying the equivalence $ ?\lten_{\bm\Gamma_{rel}}J_{rel} $ to $ P_{S_{v}}\ra S_{v} $, we obtain a projective resolution of $ S_{v} $ in the category of $ J_{rel} $-modules
		$$ 0\ra P_{v}\xrightarrow{c} B\ra B'\ra P_{v}\ra S_{v}\ra0 .$$
		Thus, the map $ c $ is injective.
	\end{proof}

	\bigskip
	\bigskip
	Let $ Q_{0}^{m}=Q_{0}\setminus F_{0} $ and $ Q_{1}^{m}=Q_{1}\setminus F_{1} $. Let $ S $ be the semisimple $ k $-algebra $ \prod_{i\in Q_{0}}ke_{i} $. We denote  by $ R^{m} $, $ V $ and $ V^{m} $ the $ S $-bimodules generated by $ Q_{0}^{m} $, $ Q_{1} $ and $ Q_{1}^{m} $ respectively. Let $ V^{m*} $ be the dual bimodule $ \Hom_{S^{e}}(V^{m},S^{e}) $.
	
	We have a canonical short exact sequence of $ \mathbf{\Gamma}_{rel} $-bimodules
	$$ 0\ra\ker(m)\xrightarrow{\rho}\mathbf{\Gamma}_{rel}\ten_{S}\mathbf{\Gamma}_{rel}\xrightarrow{\delta}\mathbf{\Gamma}_{rel}\ra0 ,$$ where the map $ \delta $ is induced by the multiplication of $ \mathbf{\Gamma}_{rel} $, $ \ker(m)=\mathbf{\Gamma}_{rel}\ten_{S}(V\oplus\Si V^{m*}\oplus\Si^{2}R^{m})\ten_{S}\mathbf{\Gamma}_{rel} $ and $ \rho $ maps $ \alpha\ten f\ten\beta $ to $ \alpha f\ten\beta-\alpha\ten f\beta $. The mapping cone $ \cone(\rho) $ of $ \rho $ is a cofibrant resolution of $ \mathbf{\Gamma}_{rel} $ as a bimodule over itself.
	
	Then $ J_{rel}\otimes_{\bm\Gamma_{rel}}\cone(\rho)\otimes_{\bm\Gamma_{rel}}J_{rel} $ is the following complex and we denote it by $ P(J_{rel}) $ (see~\cite[pp. 10]{presslandCalabiYauPropertiesPostnikov2019})
	\begin{align*}
		\xymatrix{
			0\ar[r]&J_{rel}\otimes_{S} R^{m}\otimes_{S}J_{rel}\ar[r]^{m_{3}}&J_{rel}\otimes_{S} V^{m*}\otimes_{S}J_{rel}\ar[r]^{m_{2}}&J_{rel}\otimes_{S} V\otimes_{S}J_{rel}\ar[r]^-{m_{1}}&J_{rel}\otimes_{S}J_{rel}\ar[r]&0,
		}
	\end{align*} 
	where $ m_{3} $, $ m_{2} $ and $ m_{1} $ are given by as follows:
	$$ m_{1}(x\otimes a\otimes y)=xa\otimes y-x\otimes ay $$ and  
	$$ m_{3}(x\otimes t_{i}\otimes y)=\sum_{a,t(a)=t_{i}}xa\otimes a^{*}\otimes y-\sum_{b,s(b)=t_{i}}x\otimes b^{*}\otimes by. $$
	
	For any path $ p=a_{m}\cdots a_{1} $ of $ kQ $, we define
	\begin{align*}
		\triangle_{a}(p)=\sum_{a_{i}=a}a_{m\cdots}a_{i+1}\otimes a_{i}\otimes a_{i-1}\cdots a_{1},
	\end{align*}
	and extend by linearity to obtain a map $ \triangle_{a}:kQ\to J_{rel}\otimes_{S} kQ_{1}\otimes_{S} J_{rel} $. Then $ m_{2} $ is given by:
	$$ m_{2}(x\otimes a^{*}\otimes y)=\sum_{b\in Q_{1}}x\triangle_{b}(\partial_{a}W)y.$$
	
	There is a canonical morphism $ P(J_{rel}) \to J_{rel} $, which is induced by the multiplication map $ m $ in $ J_{rel} $
	\begin{align}\label{extended cotangent complex}
		\xymatrix{
			0\ar[r]&J_{rel}\otimes_{S} R^{m}\otimes_{S}J_{rel}\ar[r]^{m_{3}}\ar[d]&J_{rel}\otimes_{S} V^{m*}\otimes_{S}J_{rel}\ar[r]^{m_{2}}\ar[d]&J_{rel}\otimes_{S} V\otimes_{S}J_{rel}\ar[r]^-{m_{1}}\ar[d]&J_{rel}\otimes_{S}J_{rel}\ar[r]\ar[d]^{m}&0
			\\
			&0\ar[r]&0\ar[r]&0\ar[r]&J_{rel}.
		}
	\end{align} 
	
	\begin{Rem}
		When $ F=\emptyset $, the complex~(\ref{extended cotangent complex}) defined above is the complex associated to $ (Q, W) $ defined by Ginzburg in~\cite[Section 5]{ginzburgCalabiyauAlgebras2006a}. In general, it is exactly the complex $ P(J_{rel}) $ defined by Pressland in~\cite[pp. 10]{presslandCalabiYauPropertiesPostnikov2019}. Moreover, it has already appeared in work of Amiot–Reiten–Todorov (see~\cite[Propostion 2.2]{amiotUbiquityGeneralizedCluster2011a}). 
	\end{Rem}

	\begin{Lem}\label{Lemma:concentrated in degree 0}
		If complex (\ref{extended cotangent complex}) is exact, then $ \mathbf{\Gamma}_{rel} $ is concentrated in degree 0.
	\end{Lem}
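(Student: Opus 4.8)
The plan is to translate the exactness hypothesis into the single statement that the multiplication morphism $J_{rel}\lcten^{\bL}_{\bm\Gamma_{rel}}J_{rel}\to J_{rel}$ is invertible, and then to force concentration in degree $0$ by a truncation argument exploiting connectivity of $\bm\Gamma_{rel}$. Throughout, write $J_{rel}=J(Q,F,W)=H^{0}(\bm\Gamma_{rel})$, let $\pi\colon\bm\Gamma_{rel}\to J_{rel}$ be the canonical projection, and regard $J_{rel}$ as a pseudocompact $\bm\Gamma_{rel}$-(bi)module along $\pi$.

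\emph{Step 1: reinterpreting the hypothesis.} Since $\cone(\rho)$ is a cofibrant resolution of $\bm\Gamma_{rel}$ as a bimodule over itself, the complex $P(J_{rel})=J_{rel}\otimes_{\bm\Gamma_{rel}}\cone(\rho)\otimes_{\bm\Gamma_{rel}}J_{rel}$ computes $J_{rel}\lcten^{\bL}_{\bm\Gamma_{rel}}J_{rel}$ in $\cd_{pc}(J_{rel}^{e})$, and under this identification the morphism $P(J_{rel})\to J_{rel}$ of~(\ref{extended cotangent complex}) is the multiplication morphism $\mu\colon J_{rel}\lcten^{\bL}_{\bm\Gamma_{rel}}J_{rel}\to J_{rel}$. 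Hence the hypothesis that the total complex of~(\ref{extended cotangent complex}) be acyclic says exactly that $\mu$ is an isomorphism in the derived category, i.e.\ that $\pi$ is a localization functor in the sense of Section~\ref{Section5}. By the base-change formula this yields $M\lcten^{\bL}_{\bm\Gamma_{rel}}J_{rel}\simeq M\lcten^{\bL}_{J_{rel}}\bigl(J_{rel}\lcten^{\bL}_{\bm\Gamma_{rel}}J_{rel}\bigr)\simeq M$ for every pseudocompact $J_{rel}$-module $M$.

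\emph{Step 2: the fibre of $\pi$.} Put $K=\fib(\pi)\in\cd_{pc}(\bm\Gamma_{rel})$. Since $\bm\Gamma_{rel}$ is connective and $H^{0}(\pi)$ is the identity of $J_{rel}$, the long exact cohomology sequence of $K\to\bm\Gamma_{rel}\xrightarrow{\pi}J_{rel}\to\Si K$ gives $H^{p}(K)=0$ for $p\geqslant0$, so $K$ has cohomology in degrees $\leqslant-1$; thus $\bm\Gamma_{rel}$ is concentrated in degree $0$ precisely when $K=0$. Applying $-\lcten^{\bL}_{\bm\Gamma_{rel}}J_{rel}$ to this triangle and using $\bm\Gamma_{rel}\lcten^{\bL}_{\bm\Gamma_{rel}}J_{rel}\cong J_{rel}$: the resulting map $J_{rel}\cong\bm\Gamma_{rel}\lcten^{\bL}_{\bm\Gamma_{rel}}J_{rel}\to J_{rel}\lcten^{\bL}_{\bm\Gamma_{rel}}J_{rel}$ is a section of $\mu$, hence an isomorphism since $\mu$ is one; therefore $K\lcten^{\bL}_{\bm\Gamma_{rel}}J_{rel}=0$.

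\emph{Step 3: forcing $K=0$.} Assume $K\neq0$ and let $-n$, $n\geqslant1$, be the largest degree with $H^{-n}(K)\neq0$. Being the top nonvanishing cohomology of a module over the connective algebra $\bm\Gamma_{rel}$, the space $H^{-n}(K)$ is annihilated by $H^{<0}(\bm\Gamma_{rel})$, hence is naturally a $J_{rel}$-module, and by Step 1 we have $H^{-n}(K)\lcten^{\bL}_{\bm\Gamma_{rel}}J_{rel}\simeq H^{-n}(K)$, concentrated in degree $0$ and nonzero. Now apply $-\lcten^{\bL}_{\bm\Gamma_{rel}}J_{rel}$ to the truncation triangle $\tau^{\leqslant-n-1}K\to K\to\tau^{\geqslant-n}K\to\Si\tau^{\leqslant-n-1}K$, with $\tau^{\geqslant-n}K$ quasi-isomorphic to $H^{-n}(K)$ placed in cohomological degree $-n$. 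Because $\bm\Gamma_{rel}$ is connective and $J_{rel}$ is concentrated in degree $0$, the functor $-\lcten^{\bL}_{\bm\Gamma_{rel}}J_{rel}$ carries complexes with cohomology in degrees $\leqslant d$ to complexes with cohomology in degrees $\leqslant d$ — choose a cofibrant replacement of $\tau^{\leqslant-n-1}K$ concentrated in degrees $\leqslant-n-1$, which exists by connectivity, cf.\ Proposition~\ref{Prop:pseudocompact for smooth dg}. In the resulting triangle the left term has cohomology in degrees $\leqslant-n-1$ and the middle term is $K\lcten^{\bL}_{\bm\Gamma_{rel}}J_{rel}=0$ by Step 2; so $H^{-n}(K)$ placed in degree $-n$ would be a shift of an object with cohomology in degrees $\leqslant-n-1$, which is impossible since its $(-n)$-th cohomology is $H^{-n}(K)\neq0$. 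Hence $K=0$, i.e.\ $\pi$ is a quasi-isomorphism, and $\bm\Gamma_{rel}$ is concentrated in degree $0$.

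The only genuinely conceptual point is Step 1 — recognising $P(J_{rel})$ as the derived tensor square $J_{rel}\lcten^{\bL}_{\bm\Gamma_{rel}}J_{rel}$ equipped with its multiplication augmentation; the remainder is a routine connectivity/truncation manipulation. I expect the main technical obstacle to be the pseudocompact bookkeeping behind Step 3, namely justifying that $-\lcten^{\bL}_{\bm\Gamma_{rel}}J_{rel}$ does not raise cohomological degrees (equivalently, that cofibrant replacements may be chosen in a controlled range of degrees), for which one leans on the results about smooth connective pseudocompact dg algebras recalled in Section~\ref{Section2}.
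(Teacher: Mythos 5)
Your proof is correct, but it follows a genuinely different route from the paper's. You reinterpret the exactness of the complex~(\ref{extended cotangent complex}) as saying that the projection $\pi\colon\bm\Gamma_{rel}\to J_{rel}$ is a homological epimorphism ($J_{rel}\lcten_{\bm\Gamma_{rel}}J_{rel}\iso J_{rel}$), deduce that the fibre $K$ of $\pi$ satisfies $K\lcten_{\bm\Gamma_{rel}}J_{rel}=0$, and then kill $K$ by a top-degree truncation argument: the highest nonvanishing cohomology $H^{-n}(K)$ is a $J_{rel}$-module, hence survives $-\lcten_{\bm\Gamma_{rel}}J_{rel}$ by base change, contradicting the vanishing of $K\lcten_{\bm\Gamma_{rel}}J_{rel}$ together with the fact that this functor does not raise cohomological degrees over a connective algebra. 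The paper instead proves that the restriction functor $\cd_{pc}(J_{rel})\to\cd_{pc}(\bm\Gamma_{rel})$ is an equivalence by testing on the simple modules $S_{i}$: it uses that both opposite categories are compactly generated by the simples (part~(d) of Proposition~\ref{Prop:pseudocompact for smooth dg}) and computes $\RHom_{J_{rel}}(S_{i},S_{j})\cong\RHom_{\bm\Gamma_{rel}}(S_{i},S_{j})$ directly from the resolution $S_{i}\ten_{J_{rel}}P(J_{rel})$. Your argument buys independence from the compact-generation statement and makes the mechanism (homological epimorphism plus connectivity forces $\pi$ to be a quasi-isomorphism) more transparent; its cost is the technical point you already flag, namely that in the pseudocompact model structure one must justify choosing cofibrant replacements of bounded-above modules within the same range of degrees, for which the boundedness remark after the Positselski theorem and the connectivity of $\bm\Gamma_{rel}$ suffice. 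Both proofs ultimately establish the same intermediate statement, that restriction along $\pi$ is an equivalence.
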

	\begin{proof}
		Let $ \cd_{pc}(\mathbf{\Gamma}_{rel}) $ be the pseudocompact derived category of $ \mathbf{\Gamma}_{rel} $. For each vertex $ i $, we denote by $ S_{i} $ the simple $  \mathbf{\Gamma}_{rel} $-module (or $ J_{rel} $-module) associated with $ i $. By Proposition~\ref{Prop:pseudocompact for smooth dg}, the opposite category $ \cd_{pc}(\mathbf{\Gamma}_{rel})^{op} $ is compactly generated by $ \{S_{i}\,|\,i\in Q_{0}\} $ and similarly for $ \cd_{pc}(J_{rel})^{op} $. The restriction functor $$ R:\cd_{pc}(J_{rel})\ra\cd_{pc}(\mathbf{\Gamma}_{rel}) $$ takes $ S_{i} $ to $ S_{i} $. Thus, we can conclude that $ R $ is an equivalence if it induces isomorphisms
		$$ \Ext_{J_{rel}}^{*}(S_{i},S_{j})\iso\Ext_{\mathbf{\Gamma}_{rel}}^{*}(S_{i},S_{j}) ,\,\forall i,j\in Q_{0} .$$
		
		If the complex (\ref{extended cotangent complex}) is exact, then $ P(J_{rel}) $ is a projective resolution of $ J_{rel} $ as a bimodule over itself. Thus, for each vertex $ i\in Q_{0} $, $ S_{i}\ten_{J_{rel}}P(J_{rel}) $ is a cofibrant resolution of $ S_{i} $ as a right $ J_{rel} $-module. So we have:
		\begin{equation*}
			\begin{split}
				\RHom_{J_{rel}}(S_{i},S_{j})&=\Hom_{J_{rel}}(S_{i}\ten_{J_{rel}}P(J_{rel}),S_{j})\\
				&=\Hom_{J_{rel}}(S_{i}\ten_{\mathbf{\Gamma}_{rel}}\cone(\rho)\ten_{\mathbf{\Gamma}_{rel}}J_{rel},S_{j})\\
				&=\Hom_{\mathbf{\Gamma}_{rel}}(S_{i}\ten_{\mathbf{\Gamma}_{rel}}\cone(\rho),\Hom_{J_{rel}}(J_{rel},S_{j}))\\
				&=\Hom_{\mathbf{\Gamma}_{rel}}(S_{i}\ten_{\mathbf{\Gamma}_{rel}}\cone(\rho),S_{j})\\
				&=\RHom_{\mathbf{\Gamma}_{rel}}(S_{i},S_{j}).
			\end{split}
		\end{equation*}
		Thus, the restriction functor $ R $ is an equivalence. It follows that $ \mathbf{\Gamma}_{rel} $ is concentrated in degree $ 0 $.
	\end{proof}
	
	\begin{Ex}\label{Ex:Postnikov diagram}
		Let $ D $ be a Postnikov diagram in the disc (see~\cite{presslandCalabiYauPropertiesPostnikov2019}). We can associate to $ D $ an ice quiver with potential $ (Q_{D},F_{D},W_{D}) $ (see~\cite[Definition 2.3]{presslandCalabiYauPropertiesPostnikov2019}). By~\cite[Proposition 3.6]{presslandCalabiYauPropertiesPostnikov2019} and Lemma~\ref{Lemma:concentrated in degree 0}, the corresponding complete relative Ginzburg algebra $ \bm{\Gamma}_{rel}(Q_{D},F_{D},W_{D}) $ is concentrated in degree 0. Thus, the associated boundary dg algebra $ \Bd(Q_{D},F_{D},W_{D}) $ is also concentrated in degree 0. Hence the boundary dg algebra is invariant under the mutations at the unfrozen vertices. 
		
		If $ D $ has the property that every strand has exactly $ k $ boundary regions on its right, then each strand must terminate at a marked point $ k $ steps clockwise from its source. Such $ D $ is called a \emph{(k,n)-diagram} (see~\cite{baurDimerModelsCluster2016}). In this case, Corollary~\ref{Cor:The boundary dg algebra is invariant under mutations} gives a different proof of Baur--King--Marsh's result~\cite[Corollary 10.4]{baurDimerModelsCluster2016} which says that the boundary algebra is independent of the choice of Postnikov diagram $ D $, up to isomorphism.
		
	\end{Ex}

	\section{Mutation at frozen vertices}\label{Section6}
	Let $ (Q,F) $ be an ice quiver. Let $ v $ be a frozen vertex.
	\begin{Def}\rm
		We say that $ v $ is a \emph{frozen source} of $ Q $ if $ v $ is a source vertex of $ F $ and no unfrozen arrows with source $ v $. Similarly, We say that $ v $ is a \emph{frozen sink} of $ Q $ if $ v $ is a sink vertex of $ F $ and no unfrozen arrows with target $ v $. For two vertices $ i $ and $ j $, we say that they have the \emph{same state} if they are both in $ F_{0} $ or $ Q_{0}\setminus F_{0} $. Otherwise, we say that they have \emph{different state}. Similarly, for two arrows in $ Q $, we say that they have the \emph{same state} if they are both in $ F_{1} $ or $ Q_{1}\setminus F_{1} $. Otherwise, we say that they have \emph{different state}.
	\end{Def}

	\subsection{Combinatorial mutations}
	Mutation at frozen vertices first appears in recent work of Fraser--Sherman-Bennett on positroid cluster structures~\cite{fraserPositroidClusterStructures2020}.
	\begin{Def}\rm\label{Def:Combinatorial ice mutations}
		Let $ v $ be a frozen source or a frozen sink of $ Q $ such that no loops or 2-cycles of $ Q $ are incident with $ v $. The \emph{mutation} $ \mu_{v}^{P}(Q,F)=(\mu_{v}^{P}(Q),\mu_{v}^{P}(F)) $ of $ (Q, F) $ at $ v $ is defined to be the output of the following procedure.
		\begin{itemize}
			\item[(1)] For each pair of arrows $ \alpha:u\ra v$ and $ \beta:v\ra w $, add an unfrozen arrow $ [\beta\alpha]:u\ra w $ to $ Q $.
			\item[(2)] Replace each arrow $ \alpha:u\ra v $ by an arrow $ \alpha^{*}:v\ra u $ of the same state as $ \alpha $ and each arrow $ \beta:v\ra w $ by an arrow $ \beta^{*}:w\ra v $ of the same state as $ \beta $.
			\item[(3)] Remove a maximal collection of unfrozen 2-cycles, i.e. 2-cycles avoiding the subquiver $ F $.
			\item[(4)] Choose a maximal collection of half-frozen 2-cycles, i.e. 2-cycles in which precisely one arrow is frozen. Replace each 2-cycle in this collection by a frozen arrow, in the direction of the unfrozen arrow in the 2-cycle.
		\end{itemize}
	\end{Def}
	
	\begin{Rem}
		The procedure in the Definition above is the same as in Pressland's Definition~\ref{Def:Combinatorial mutations}.
	\end{Rem}
	
	\begin{Ex}\label{Example:mutation at frozen}
		Consider the following ice quiver $ (Q,F) $
		\[
		\begin{tikzcd}[row sep=1ex]
			&\color{blue}\boxed{1}\arrow[ddr]&&\color{blue}\boxed{2}\arrow[ll,blue]\arrow[dddd,blue]\\
			\ \\
			(Q,F)=&&5\arrow[uur]\arrow[ddl]&\\
			\ \\
			&\color{blue}\boxed{3}\arrow[uuuu,blue]\arrow[rr,blue]&&\color{blue}\boxed{4}\arrow[uul]\,,
		\end{tikzcd}
		\] where the frozen subquiver $ F $ is drawn in blue. The frozen vertex 3 is a frozen source. Performing the mutation at vertex 3, we get the following ice quiver
		\[
		\begin{tikzcd}[row sep=1ex]
			&\color{blue}\boxed{1}\arrow[dddd,blue]&&\color{blue}\boxed{2}\arrow[ll,blue]\arrow[dddd,blue]\\
			\ \\
			(Q',F')=&&5\arrow[uur]&\\
			\ \\
			&\color{blue}\boxed{3}\arrow[uur]&&\color{blue}\boxed{4}\arrow[ll,blue]\,.
		\end{tikzcd}
		\]
		The frozen vertex 2 is a frozen source. Performing the mutation at vertex 2, we get the following ice quiver
		\[
		\begin{tikzcd}[row sep=1ex]
			&\color{blue}\boxed{1}\arrow[dddd,blue]\arrow[rr,blue]&&\color{blue}\boxed{2}\arrow[ddl]\\
			\ \\
			(Q'',F'')=&&5\arrow[uul]\arrow[ddr]&\\
			\ \\
			&\color{blue}\boxed{3}\arrow[uur]&&\color{blue}\boxed{4}\arrow[ll,blue]\arrow[uuuu,blue]\,.
		\end{tikzcd}
		\]
		
		Surprisingly, after performing the mutation $ \mu^{P}_{5}(Q,F) $ at the unfrozen vertex 5, we get the same ice quiver $ (Q'',F'') $, i.e. $ \mu^{P}_{5}(Q,F)=\mu^{P}_{2}(\mu^{P}_{3}(Q,F)) $. 
		
	\end{Ex}
	
	\subsection{Algebraic mutations}
	Let $ (Q,F,W) $ be an ice quiver with an irredundant potential. Let $ v $ be a frozen source or a frozen sink such that no loops or 2-cycles of $ Q $ are incident with $ v $.
	\begin{Def}\rm\cite[Definition 4.1]{presslandMutationFrozenJacobian2020}\label{Def:algebraic ice mutation}
		The ice quiver with potential $ \tilde{\mu}_{v}(Q,F,W) $, called the \emph{pre-mutation} of $ (Q,F,W) $ at $ v $, is the output of the following procedure.
		\begin{itemize}
			\item[(1)] For each pair of arrows $ \alpha:u\to v $ and $ \beta:v\to w $, add an unfrozen ‘composite' arrow $ [\beta\alpha]:u\to w $ to $ Q $.
			\item[(2)] Replace each arrow $ \alpha:u\to v $ by an arrow by an arrow $ \alpha^{*}:v\to u $ of the same state as $ \alpha $ and each arrow $ \beta:v\to w $ by an arrow $ \beta^{*}:w\to v $ of the same state as $ \beta $.
			\item[(3)] Pick a representative $ \widetilde{W} $ of $ W $ in $ kQ $ such that no term of $ W $ begins at $ v $ (which is possible since there are no loops at $ v $). For each pair of arrows $ \alpha,\beta $ as in $ (1) $, replace each occurrence of $ \beta\alpha $ in $ \widetilde{W} $ by $ [\beta\alpha] $, and add the term $ [\beta\alpha]\alpha^{*}\beta^{*} $.
		\end{itemize}
		Let us write $ (Q',F',W') $ for $ \tilde{\mu}_{v}(Q,F,W) $. It is clear that the new potential $ W' $ is also irredundant, since the arrows $ [\beta\alpha] $ are unfrozen, but it need not be reduced even if $ (Q,F,W) $ is. We define $ \mu_{v}(Q,F,W) $ by replacing the resulting ice quiver with potential $ \tilde{\mu}_{v}(Q,F,W) $ by its reduction, as in Theorem~\ref{Thm:reduction}, this being unique up to right equivalence by Proposition~\ref{Prop:rduction up to right equ}. We call $ \mu_{v} $ the \emph{mutation} at the vertex $ v $. 
	\end{Def}
	
	\begin{Ex}
		Consider the following ice quiver $ (Q,F) $
		\[
		\begin{tikzcd}[row sep=1ex]
			&\color{blue}\boxed{1}\arrow[ddr,"a"]&&\color{blue}\boxed{2}\arrow[ll,blue,swap,"c"]\arrow[dddd,blue,"f"]\\
			\ \\
			(Q,F)=&&5\arrow[uur,swap,"b"]\arrow[ddl,"e"]&\\
			\ \\
			&\color{blue}\boxed{3}\arrow[uuuu,blue,"g"]\arrow[rr,blue,swap,"i"]&&\color{blue}\boxed{4}\arrow[uul,"h"]\,,
		\end{tikzcd}
		\] where the blue subquiver represents the frozen subquiver $ F $. We consider the potential $$ W=cba-gea+hie-fbh .$$
		The vertex 3 is a frozen source. The pre-mutation at vertex 3 is the following ice quiver with potential
		\[
		\begin{tikzcd}[row sep=1ex]
			&\color{blue}\boxed{1}\arrow[ddr,swap,"a"]\arrow[dddd,blue,swap,"g^{*}"]&&\color{blue}\boxed{2}\arrow[ll,blue,swap,"c"]\arrow[dddd,blue,"f"]\\
			\ \\
			(\mu'_{3}(Q),\mu'_{3}(F))=&&5\arrow[uur,swap,"b"]\arrow[bend right=20,swap]{uul}{[ge]}\arrow[bend right=20,swap]{ddr}{[ie]}&\\
			\ \\
			&\color{blue}\boxed{3}\arrow[uur,"e^{*}"]&&\color{blue}\boxed{4}\arrow[uul,swap,"h"]\arrow[ll,blue,"i^{*}"]\,,
		\end{tikzcd}
		\]
		where the blue subquiver represents the frozen subquiver $ \mu'_{3}(F) $.
		The new potential is given by $ \mu'_{3}(W) $ is given by 
		$$ \mu'_{3}(W)=cba-[ge]a+h[ie]-fbh+[ge]e^{*}g^{*}+[ie]e^{*}i^{*}.$$
		This ice quiver with potential is not reduced. Then $ \mu_{3}(Q,F,W) $ is given by its reduction, which is the following ice quiver with potential
		\[
		\begin{tikzcd}[row sep=1ex]
			&\color{blue}\boxed{1}\arrow[dddd,blue,swap,"g^{*}"]&&\color{blue}\boxed{2}\arrow[ll,blue,swap,"c"]\arrow[dddd,blue,"f"]\\
			\ \\
			(\mu_{3}(Q),\mu_{3}(F),\mu_{3}(W))=&&5\arrow[uur,swap,"b"]&\\
			\ \\
			&\color{blue}\boxed{3}\arrow[uur,"e^{*}"]&&\color{blue}\boxed{4}\arrow[ll,blue,"i^{*}"]\,,
		\end{tikzcd}
		\]
		where the blue subquiver represents the frozen subquiver $ \mu_{3}(F) $ and the new potential $ \mu_{3}(W) $ is zero. We see that the underlying ice quiver of $ \mu_{3}(Q,F,W) $ is the same as $ \mu^{P}_{3}(Q,F) $ in Example~\ref{Example:mutation at frozen}.
	\end{Ex}
	
	\begin{Thm}\cite[Proposition 4.6]{presslandMutationFrozenJacobian2020}
		Let $ (Q,F,W) $ be an ice quiver with potential and $ v $ a frozen source or a frozen sink. If $ (\mu_{v}Q,\mu_{v}F) $ has no $ 2 $-cycles containing unfrozen arrows, then the underlying ice quiver of $ \mu_{v}(Q,F,W) $ agrees with $ \mu^{P}_{v}(Q,F) $ defined in Definition~\ref{Def:Combinatorial ice mutations}.
	\end{Thm}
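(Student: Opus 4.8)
The plan is to adapt Pressland's proof of \cite[Proposition~4.6]{presslandMutationFrozenJacobian2020}, which treats the case of an unfrozen vertex, noting that that argument never uses anything about $v$ beyond the absence of loops and $2$-cycles at $v$ and the fact that the first two steps of the combinatorial and algebraic mutations coincide. Indeed, steps (1)--(2) of Definition~\ref{Def:Combinatorial ice mutations} and steps (1)--(2) of Definition~\ref{Def:algebraic ice mutation} perform exactly the same operations: they add the same unfrozen composite arrows $[\beta\alpha]$ and reverse every arrow incident with $v$, keeping its frozen/unfrozen state. Hence the underlying ice quiver of the pre-mutation $\tilde{\mu}_v(Q,F,W)$ equals the ice quiver $(\widetilde Q,\widetilde F)$ obtained from $(Q,F)$ by steps (1)--(2) of Definition~\ref{Def:Combinatorial ice mutations}, and the whole content of the statement reduces to showing that passing to the reduction $\mu_v(Q,F,W)$ of $\tilde{\mu}_v(Q,F,W)$ deletes and re-freezes arrows of $\widetilde Q$ exactly as steps (3)--(4) of Definition~\ref{Def:Combinatorial ice mutations} prescribe.

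First I would make the state bookkeeping explicit. Assume $v$ is a frozen source (the frozen-sink case is dual). Since $v$ is a source of $F$, no arrow of $F$ has target $v$, so every arrow of $Q$ into $v$ is unfrozen; and by hypothesis every arrow of $Q$ out of $v$ is frozen. After reversal the arrows $\alpha^{*}\colon v\to u$ are therefore unfrozen and the arrows $\beta^{*}\colon w\to v$ frozen, while the composites $[\beta\alpha]$ are unfrozen, which is precisely what both definitions demand. I would also record two facts that confine the reduction to the part of $\widetilde Q$ away from $v$: no $2$-cycle of $\widetilde Q$ is incident with $v$ (such a $2$-cycle would come from a $2$-cycle of $Q$ through $v$, which is excluded), and the only terms of $W'=W_1+W_2$ involving arrows at $v$ are the cubic terms $[\beta\alpha]\alpha^{*}\beta^{*}$, which contribute nothing quadratic. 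Consequently the arrows at $v$ are untouched by the reduction, and from this point the situation is verbatim that of the unfrozen case.

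For the reduction itself I would proceed as in Theorem~\ref{Thm:reduction} and Lemma~\ref{Lem:Normal form of potential}: using Lemma~\ref{Lem:irredundant to dg} we may assume $W'$ is irredundant, and by Lemma~\ref{Lem:Normal form of potential} it has a representative of the form $\sum_{i=1}^{M}\alpha_i\beta_i+\sum_{i=M+1}^{N}(\alpha_i\beta_i+\alpha_ip_i)+W_1$ with each $\alpha_i$ unfrozen and $\beta_i$ frozen precisely for $i>M$; thus the $\alpha_i\beta_i$ with $i\le M$ are unfrozen $2$-cycles and those with $M<i\le N$ are half-frozen $2$-cycles. The reduction then deletes $\alpha_i$ and $\beta_i$ for $i\le M$ and, for $M<i\le N$, deletes $\beta_i$ and freezes $\alpha_i$ -- that is, it removes a collection of unfrozen $2$-cycles (step (3)) and replaces each half-frozen $2$-cycle by a frozen arrow in the direction of its unfrozen arrow (step (4)). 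The hard part is the last point: one must check that, under the hypothesis that $(\mu_v Q,\mu_v F)$ contains no $2$-cycle through an unfrozen arrow, the collection $\{\alpha_i\beta_i\}_{i\le N}$ appearing in the normal form is a \emph{maximal} collection of removable $2$-cycles, so that no residual unfrozen or half-frozen $2$-cycle is left over; this is exactly where the hypothesis is used, just as in \cite{presslandMutationFrozenJacobian2020}. Combined with the uniqueness of the reduction up to right equivalence (Proposition~\ref{Prop:rduction up to right equ}), this forces the arrow sets of $\mu_v(Q,F,W)$ and $\mu^{P}_v(Q,F)$ to coincide, which is the assertion.
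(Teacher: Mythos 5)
The paper gives no proof of this statement---it is quoted directly from Pressland's Proposition~4.6, whose argument for unfrozen vertices is precisely what you adapt. Your adaptation is correct and is the intended one: the frozen-source/sink state bookkeeping, the observation that no arrow incident with $v$ occurs in the quadratic part of $W'$ (so the reduction leaves the arrows at $v$ untouched), and the use of the no-unfrozen-$2$-cycle hypothesis to force maximality of the removed collection all match Pressland's proof.
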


	\subsection{Categorical mutations}
	Let $ (Q,F,W) $ be an ice quiver with potential. Let $ v $ be a frozen source. Write $ (Q',F',W')=\tilde{\mu}_{v}(Q,F,W) $. Let $ \bm\Gamma_{rel}=\bm\Gamma_{rel}(Q,F,W) $ and $ \bm\Gamma'_{rel}=\bm\Gamma_{rel}(Q',F',W') $ be the complete relative Ginzburg dg algebras associated to $ (Q,F,W) $ and $ (Q',F',W') $ respectively. For a vertex $ i $, let $ \bm\Gamma_{i}=e_{i}\bm\Gamma_{rel} $ and $ \bm\Gamma'_{i}=e_{i}\bm\Gamma'_{rel} $.
	\begin{Thm}\label{Thm:categorification of ice mutaion1}
		We have a triangle equivalence
		$$ \Psi_{+}:\cd(\bm\Gamma'_{rel})\ra\cd(\bm\Gamma_{rel}) ,$$
		which sends the $ \bm\Gamma'_{i} $ to $ \mathbf\Gamma_{i} $ for $ i\neq v $ and $ \mathbf\Gamma_{v} $ to the cone
		$$ \cone(\bm\Gamma_{v}\ra\bigoplus_{\alpha}\bm\Gamma_{t(\alpha)}),$$ where we have a summand $ \bm\Gamma_{t(\alpha)} $ for each arrow $ \alpha $ of $ F $ with source $ v $ and the corresponding component of the map is the left multiplication by $ \alpha $. The functor $ \Psi_{+} $ restricts to triangle equivalences from $ \per(\bm\Gamma'_{rel}) $ to $ \per(\bm\Gamma_{rel}) $ and from $ \pvd(\bm\Gamma'_{rel}) $ to $ \pvd(\bm\Gamma_{rel}) $. Moreover, the following square commutes up to isomorphism
		\begin{equation}\label{Diagram:left ice mutation}
			\begin{tikzcd}
				\cd(\bm\Pi_{2}(F'))\arrow[r]\arrow[d,swap,"\mathrm{can}"]&\cd(\bm\Gamma'_{rel})\arrow[d,"\Psi_{+}"]\\
				\cd(\bm\Pi_{2}(F))\arrow[d,swap,"t^{-1}_{S_{v}}"]&\cd(\bm\Gamma_{rel})\arrow[d,equal]\\
				\cd(\bm\Pi_{2}(F))\arrow[r,swap]&\cd(\bm\Gamma_{rel}),
			\end{tikzcd}
		\end{equation}
		where $\mathrm{can}$ is the canonical functor induced by an identification between $ \bm\Pi_{2}(F') $ and $ \bm\Pi_{2}(F) $ and $ t^{-1}_{S_{v}} $ is the inverse twist functor with respect to the $ 2 $-spherical object $ S_{v} $, which gives rise to a triangle
		$$ t^{-1}_{S_{v}}(X)\ra X\ra\Hom_{k}(\RHom_{\bm\Pi_{2}(F)}(X,S_{v}),S_{v})\ra\Si t^{-1}_{S_{v}}(X) $$ for each object $ X $ of $ \cd(\bm\Pi_{2}(F)) $.
	\end{Thm}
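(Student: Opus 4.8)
The plan is to run the argument of Theorem~\ref{Thm:equivalence non-frozen} (i.e.\ of Subsection~\ref{Subsection:Derived equivalences}) with the frozen source $v$ in place of the unfrozen vertex, and then to identify the resulting operation on $\cd(\bm\Pi_{2}(F))$. The key elementary point is that, since $v$ is a frozen source, \emph{every} arrow of $Q$ with source $v$ is frozen; hence the tilting object
$$ T=\bigoplus_{j\neq v}P_{j}\ \oplus\ T_{v},\qquad T_{v}=\cone\Bigl(P_{v}\longrightarrow\bigoplus_{\alpha\in Q_{1}:\,s(\alpha)=v}P_{t(\alpha)}\Bigr) $$
of $\cd(\widehat{kQ})$ is built only from frozen arrows, and the corresponding object over $\widehat{kF}$, namely $T^{F}=\bigoplus_{j\neq v}e_{j}\widehat{kF}\oplus\cone\bigl(e_{v}\widehat{kF}\to\bigoplus_{\alpha\in F_{1}:\,s(\alpha)=v}e_{t(\alpha)}\widehat{kF}\bigr)$, is exactly the Bernstein--Gelfand--Ponomarev tilt of $\widehat{kF}$ at the source $v$. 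Accordingly I would form, as in loc.\ cit., a commutative square of Morita functors
\[
\begin{tikzcd}
\cb\arrow[r,"G'"]\arrow[d,swap,"\mathrm{can}"]&\ca'\arrow[d,"J"]\\
\cb'\arrow[r,"G"]&\ca,
\end{tikzcd}
\]
where $\ca$ and $\ca'=\End_{\widehat{kQ}}(T)$ are Morita equivalent to $\widehat{kQ}$ (the lower row recording the original situation, the upper row the one in which the projective at $v$ is replaced by $T_{v}$), where $\cb'$ and $\cb$ are the corresponding frozen dg subcategories, Morita equivalent to $\widehat{kF}$ and $\widehat{kF'}$ respectively and linked by the BGP tilt $\mathrm{can}$, and where $J$ is the Morita functor with $J(P'_{i})=P_{i}$ for $i\neq v$ and $J(P'_{v})=T_{v}$. (When $v$ is unfrozen this is the square of Subsection~\ref{Subsection:Derived equivalences}, with $\cb=\cb'$ and $\mathrm{can}=\mathrm{id}$; the only new feature here is that the frozen subcategory is mutated as well.)

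Applying the deformed relative $3$-Calabi--Yau completion --- Theorem~\ref{Thm:Derived equivalence} with $n=3$ --- to this square with respect to the Hochschild class $\xi=(0,B(W))$ (so that $\xi_{B}=0$), I obtain a commutative square of pseudocompact dg algebras with Morita functors $I'$, $J'$ as vertical arrows, whose corners are: $\bm\Pi_{3}(\ca,\cb',\xi)\simeq\bm\Gamma_{rel}(Q,F,W)$ and $\bm\Pi_{3}(\ca',\cb,\xi')\simeq\bm\Gamma_{rel}(Q',F',W')$ --- both by the computation behind Proposition~\ref{Prop: CY completion is Ginzburg} together with Proposition~\ref{Prop: reduce CY}, the one new verification being that the reversed arrows at $v$ retain their frozen state, so that the frozen subquiver of the upper Ginzburg algebra is $F'$ --- and $\bm\Pi_{2}(\cb')\simeq\bm\Pi_{2}(F)$, $\bm\Pi_{2}(\cb)\simeq\bm\Pi_{2}(F')$, under which $I'$ becomes an equivalence $\Lambda\colon\cd(\bm\Pi_{2}(F'))\xrightarrow{\sim}\cd(\bm\Pi_{2}(F))$. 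Setting $\Psi_{+}:=(J')^{*}$, transported along these quasi-isomorphisms, gives a triangle equivalence $\cd(\bm\Gamma'_{rel})\to\cd(\bm\Gamma_{rel})$; it restricts to $\per$ and $\pvd$ because $J'$ is a Morita functor, and by construction of $J$ it sends $\bm\Gamma'_{i}\mapsto\bm\Gamma_{i}$ for $i\neq v$ and $\bm\Gamma'_{v}$ to $\cone\bigl(\bm\Gamma_{v}\to\bigoplus_{\alpha}\bm\Gamma_{t(\alpha)}\bigr)$ over the arrows $\alpha$ of $F$ with source $v$ (which are all the arrows of $Q$ out of $v$). Passing to derived categories, commutativity of the dg-algebra square reads $\Psi_{+}\circ(\bm G'_{rel})^{*}=(\bm G_{rel})^{*}\circ\Lambda$, which is the upper rectangle of~(\ref{Diagram:left ice mutation}) once we know $\Lambda=t^{-1}_{S_{v}}\circ\mathrm{can}$.

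That last identity is the heart of the matter. The simple $S_{v}$ is $2$-spherical in $\cd(\bm\Pi_{2}(F))$: $\Ext^{1}_{\bm\Pi_{2}(F)}(S_{v},S_{v})=0$ since $F$ has no loops at $v$, and $\Ext^{2}_{\bm\Pi_{2}(F)}(S_{v},S_{v})\cong\Ext^{0}_{\bm\Pi_{2}(F)}(S_{v},S_{v})^{\vee}=k$ by the $2$-Calabi--Yau property of $\bm\Pi_{2}(F)$. From $\RHom_{\bm\Pi_{2}(F)}(\bm\Pi_{2}(F)_{i},S_{v})=e_{i}S_{v}=\delta_{iv}\,k$ and the defining triangle of $t^{-1}_{S_{v}}$ one reads off that $t^{-1}_{S_{v}}$ fixes $\bm\Pi_{2}(F)_{i}$ for $i\neq v$ and sends $\bm\Pi_{2}(F)_{v}$ to $\fib(\bm\Pi_{2}(F)_{v}\to S_{v})$; and since $v$ is a source of $F$, the standard projective resolution of the simple over the derived preprojective algebra is
$$ 0\to\bm\Pi_{2}(F)_{v}\to\bigoplus_{\alpha\in F_{1}:\,s(\alpha)=v}\bm\Pi_{2}(F)_{t(\alpha)}\to\bm\Pi_{2}(F)_{v}\to S_{v}\to 0 , $$
so that fibre is $\cone\bigl(\bm\Pi_{2}(F)_{v}\to\bigoplus_{\alpha}\bm\Pi_{2}(F)_{t(\alpha)}\bigr)$ with components the left multiplications by the $\alpha$. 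Since $\mathrm{can}$ carries $\bm\Pi_{2}(F')_{i}$ to $\bm\Pi_{2}(F)_{i}$ for every $i$, the composite $t^{-1}_{S_{v}}\circ\mathrm{can}$ acts on the $\bm\Pi_{2}(F')_{i}$ exactly as $\Lambda$ does; as both functors are derived tensor products with pseudocompact bimodules, agreement on these compact generators upgrades to a natural isomorphism $\Lambda\simeq t^{-1}_{S_{v}}\circ\mathrm{can}$, which completes the square. The dual statement for a frozen sink follows by passing to opposite algebras and replaces $t^{-1}_{S_{v}}$ by $t_{S_{v}}$. I expect the genuine difficulty to be purely bookkeeping: verifying that the relative Ginzburg recipe applied to the quiver of $\ca'$ reproduces $\tilde\mu_{v}(Q,F,W)$ \emph{with} the frozen subquiver $F'$, and that the orientation conventions pair a frozen source with the \emph{inverse} twist $t^{-1}_{S_{v}}$ (and a frozen sink with $t_{S_{v}}$); the spherical-twist computation itself is the classical identity relating Bernstein--Gelfand--Ponomarev reflections to spherical twists in the $2$-Calabi--Yau setting, and runs as in Keller--Yang.
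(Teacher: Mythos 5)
Your proposal follows essentially the same route as the paper: the same tilting object built from the frozen arrows out of $v$, the same commutative square of Morita functors fed into Theorem~\ref{Thm:Derived equivalence}, and the same identification of the induced equivalence on $\cd(\bm\Pi_{2}(F))$ with the inverse spherical twist via the cofibrant resolution of $S_{v}$ at a frozen source. The only point where the paper is more careful is your final step: rather than inferring a natural isomorphism from agreement on the generators $e_{i}\bm\Pi_{2}(F)$ (which for two bimodule tensor functors is not automatic), it works at the level of bimodules, representing $t^{-1}_{S_{v}}$ by the shifted cone of the evaluation map $\bm\Pi_{2}(F)\to\Hom_{k}(\RHom_{\bm\Pi_{2}(F)}(\bm\Pi_{2}(F),S_{v}),S_{v})$ (using the $2$-Calabi--Yau property together with a lemma of Grant--Marsh to get functoriality) and then matching that bimodule explicitly with the one defining $I'^{*}\circ\mathrm{can}^{-1}$.
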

	\begin{proof}
		Let $ P_{i} $ be the projective $ \widehat{kQ} $-module associated with vertex $ i $. Let $ M=\displaystyle\bigoplus_{\substack{j\neq v}}P_{j}\oplus M_{v}$, where $ M_{v} $ is defined by the exact sequence
		\[
		\begin{tikzcd}
			0\arrow[r]& P_{v}\arrow[r]&\displaystyle\bigoplus_{\alpha\in F_{1}:s(\alpha)=v}P_{j}\arrow[r]& M_{v}\arrow[r]&0,
		\end{tikzcd}
		\]
		where the sum is taken over all frozen arrows $ \alpha:v\to j $ and the corresponding component of the map from $ P_{v} $ to the sum is the left multiplication by $ \alpha $. Since $ v $ is a frozen source, we only have frozen arrows with source $ v $. It is easy to see that $ M $ is a tilting object of $ \cd(\widehat{kQ}) $. By abuse of notation, let $ A' $ be the endomorphism algebra $ \End_{\widehat{kQ}}(M) $.
		
		Let $ \ca $ be the full dg subcategory of $ \cc_{pc}^{dg}(\widehat{kQ}) $ whose objects are $ M_{v} $ and the $ P_{i} $, $ i\neq v $. Then $ \ca $ is Morita equivalent to $ \widehat{kQ} $. Let $ \ca' $ be the full dg subcategory of $ \cc_{pc}^{dg}(A') $ whose objects are the $ P'_{i}=e_{i}A' $, $ i\in Q_{0} $. Then $ \ca' $ is equal to $ A' $. We define a dg functor
		$$ J:\ca'\ra\ca $$ as follows:
		\begin{itemize}
			\item For $ k\neq v $, we put $ J(P'_{k})=P_{k} $,
			\item For $ k=v $, we put $ J(P'_{v})=M_{v} $. 
		\end{itemize}
		Then $ J $ is a Morita functor. It induces an isomorphism $ HC_{0}(A')\simeq HC_{0}(\widehat{kQ}) $. Let $ W'\in HC_{0}(A') $ be the element corresponding to $ W\in HC_{0}(\widehat{kQ}) $. We set $ \xi=B(W)\in HH_{1}(\widehat{kQ}) $ and $ \xi'=B(W')\in HH_{1}(A') $, where $ B $ is the Connes connecting map.
		
		Similarly, let $ N=\displaystyle\bigoplus_{\substack{j\neq v}}e_{j}\widehat{kF}\oplus N_{v} $, where $ N_{v} $ is defined by the exact sequence
		\[
		\begin{tikzcd}
			0\arrow[r]& e_{v}\widehat{kF}\arrow[r]&\displaystyle\bigoplus_{\alpha\in F_{1}:s(\alpha)=v}e_{j}\widehat{kF}\arrow[r]& N_{v}\arrow[r]&0,
		\end{tikzcd}
		\]
		where the sum is taken over all frozen arrows $ \alpha:v\to j $ and the corresponding component of the map from $ e_{v}\widehat{kF} $ to the sum is the left multiplication by $ \alpha $. It is easy to see that $ N $ is a tilting object of $ \cd(\widehat{kF}) $. Let $ B' $ be the endomorphism algebra $ \End_{\widehat{kF}}(N) $.
		
		Let $ \cb $ be the full dg subcategory of $ \cc_{pc}^{dg}(\widehat{kF}) $ whose objects are $ N_{v} $ and the $ e_{i}\widehat{kF} $, $ i\neq v $. Then $ \cb $ is Morita equivalent to $ \widehat{kF} $. Let $ \cb' $ be the full dg subcategory of $ \cc_{pc}^{dg}(B') $ whose objects are the $ e_{i}B' $, $ i\in F_{0} $. Then $ \cb' $ is equal to $ B' $. We define a dg functor
		$$ I:\cb'\ra\cb $$ as follows:
		\begin{itemize}
			\item For $ k\neq v $, we put $ I(e_{k}B')=e_{k}\widehat{kF} $,
			\item For $ k=v $, we put $ I(e_{v}B')=N_{v} $. 
		\end{itemize}
		Then $ I $ is a Morita functor. And we have the following commutative diagram 
		\[
		\begin{tikzcd}
			\cb'\arrow[r,"i'"]\arrow[d,"I"]&\ca'\arrow[d,"J"]\\
			\cb\arrow[r,"i"]&\ca,
		\end{tikzcd}
		\]
		where $ i' $ maps $ e_{i}B' $ to $ e_{i}A' $ and $ i $ sends $ e_{i}\widehat{kF} $ to $ e_{i}\widehat{kQ} $ for $ i\neq v $ and $ N_{v} $ to $ M_{v} $.

		By Theorem~\ref{Thm:Derived equivalence}, we have the following commutative diagram
		\[
		\begin{tikzcd}
			\bm{\Pi}_{2}(\cb')\arrow[r,"\tilde{i'}"]\arrow[d,"I'"]&\bm\Pi_{3}(\ca',\cb',\xi')\arrow[d,"J'"]\\
			\bm{\Pi}_{2}(\cb)\arrow[r,"\tilde{i}"]&\bm\Pi_{3}(\ca,\cb,\xi),
		\end{tikzcd}
		\]
		where $ I' $ and $ J' $ are Morita dg functors.
		
		By using the same proof as for Proposition~\ref{Prop: CY completion is Ginzburg}, $ \bm\Pi_{3}(\ca',\cb',\xi') $ is quasi-equivalent to $ \bm\Gamma'_{rel}=\bm\Gamma_{rel}(\tilde{\mu}_{v}(Q,F,W)) $. By the above construction, $ \bm{\Pi}_{2}(\cb) $ is Morita equivalent to $ \bm{\Pi}_{2}(F) $, $ \bm\Pi_{3}(\ca,\cb,\xi) $ is Morita equivalent to $ \bm\Gamma_{rel}=\bm\Gamma_{rel}(Q,F,W) $ and $ \bm{\Pi}_{2}(\cb')=\bm{\Pi}_{2}(F') $,  
		
		Thus, the following square of derived categories commutes up to isomorphism
		\[
		\begin{tikzcd}
			\cd(\bm{\Pi}_{2}(F'))\arrow[r,"\tilde{i'}^{*}"]\arrow[d,"I'^{*}"]&\cd(\bm\Gamma'_{rel})\arrow[d,"{J'^{*}\coloneqq\Psi_{+}}"]\\
			\cd(\bm{\Pi}_{2}(F))\arrow[r,"\tilde{i}^{*}"]&\cd(\bm\Gamma_{rel}),
		\end{tikzcd}
		\]
		where the triangle equivalence
		$$ \Psi_{+}:\cd(\bm\Gamma'_{rel})\ra\cd(\bm\Gamma_{rel}) ,$$
		sends the $ \bm\Gamma'_{i} $ to $ \mathbf\Gamma_{i} $ for $ i\neq v $ and $ \mathbf\Gamma_{v} $ to the cone
		$$ \cone(\bm\Gamma_{v}\ra\bigoplus_{\alpha}\bm\Gamma_{t(\alpha)})=\cone(\bm\Gamma_{v}\xrightarrow{(\alpha)}\displaystyle\bigoplus_{\alpha\in F_{1}:s(\alpha)=v}\bm\Gamma_{t(\alpha)}) $$ and the triangle equivalence
		$$ I'^{*}:\cd(\bm{\Pi}_{2}(F'))\ra\cd(\bm{\Pi}_{2}(F)) ,$$
		sends the $ e_{i}\bm{\Pi}_{2}(F') $ to $ e_{i}\bm{\Pi}_{2}(F) $ for $ i\neq v $ and $ e_{v}\bm{\Pi}_{2}(F') $ to the cone
		$$ \cone(e_{v}\bm{\Pi}_{2}(F)\xrightarrow{(\alpha)}\displaystyle\bigoplus_{\alpha\in F_{1}:s(\alpha)=v}e_{t(\alpha)}\bm{\Pi}_{2}(F)).$$

	In fact, the functor $ J'^{*} $ is the derived tensor product functor (see~\cite[Subsection 3.4]{kellerDerivedEquivalencesMutations2011})
	$$ ?\lten_{\bm\Gamma'_{rel}}U ,$$
	where the $ \bm\Gamma'_{rel}$-$\bm\Gamma_{rel} $-bimodule $ U $ is given by 
	$$ U=\displaystyle\bigoplus_{j\neq v}\bm\Gamma_{j}\oplus U_{v} $$ with $ U_{v}=\cone(\bm\Gamma_{v}\xrightarrow{(\alpha)}\displaystyle\bigoplus_{\alpha\in F_{1}:s(\alpha)=v}\bm\Gamma_{t(\alpha)}) $ and the functor $ I'^{*} $ is the derived tensor product functor
	$$ ?\lten_{\bm{\Pi}_{2}(F')}W ,$$
	where the $ \bm{\Pi}_{2}(F')$-$\bm{\Pi}_{2}(F) $-bimodule $ W $ is given by 
	$$ W=\displaystyle\bigoplus_{j\neq v}e_{j}\bm{\Pi}_{2}(F)\oplus W_{v} $$ with $ W_{v}=\cone(e_{v}\bm{\Pi}_{2}(F)\xrightarrow{(\alpha)}\displaystyle\bigoplus_{\alpha\in F_{1}:s(\alpha)=v}e_{t(\alpha)}\bm{\Pi}_{2}(F)) $.

	We next show that the functor $ I'^{*}\circ\mathrm{can}^{-1} $ is isomorphic to $ t_{S_{v}}^{-1} $. Let $ T $ be the shift cone
	$$ \Si^{-1}\cone(\bm{\Pi}_{2}(F)\xrightarrow{\mathrm{ev'}}\Hom_{k}(\RHom_{\bm{\Pi}_{2}(F)}(\bm{\Pi}_{2}(F),S_{v}),S_{v})), $$
	where $ \mathrm{ev'}(x)(f)=f(x) $ for any $ x\in\bm{\Pi}_{2}(F) $ and $ f\in\RHom_{\bm{\Pi}_{2}(F)}(\bm{\Pi}_{2}(F),S_{v}) $.

	By the 2-Calabi--Yau property, we have 
	\begin{equation*}
		\begin{split}
			\Hom_{k}(\RHom_{\bm{\Pi}_{2}(F)}(\bm{\Pi}_{2}(F),S_{v}),S_{v}))&\cong\Hom_{k}(D\RHom_{\bm{\Pi}_{2}(F)}(S_{v},\Si^{2}\bm{\Pi}_{2}(F),S_{v})\\
			&\cong S_{v}\ten_{k}\RHom_{\bm{\Pi}_{2}(F)}(S_{v},\Si^{2}\bm{\Pi}_{2}(F)).
		\end{split}
	\end{equation*}	
By \cite[Lemma 4.8]{GrantMarsh2017}, we have a natural isomorphism of functors
$$ \RHom_{\bm{\Pi}_{2}(F)}(S_{v},\Si^{2}\bm{\Pi}_{2}(F))\overset{\rm\textbf{L}}{\ten}_{\bm{\Pi_{2}}(F)}\,?\xrightarrow{}\RHom_{\bm{\Pi}_{2}(F)}(S_{v},\Si^{2}\,?) .$$
Therefore we get the following natural isomorphism of functors
$$ \Hom_{k}(\RHom_{\bm{\Pi}_{2}(F)}(\bm{\Pi}_{2}(F),S_{v}),S_{v})\overset{\rm\textbf{L}}{\ten}_{\bm{\Pi_{2}}(F)}\,?\ra\Hom_{k}(\RHom_{\bm{\Pi}_{2}(F)}(?,S_{v}),S_{v}) $$

Thus, for any object $ X $ in $ \cd(\bm{\Pi_{2}}(F)) $, $ T\overset{\rm\textbf{L}}{\ten}_{\bm{\Pi_{2}}(F)} X $ is determined by the following triangle
	$$ T\overset{\rm\textbf{L}}{\ten}_{\bm{\Pi_{2}}(F)} X\ra X\ra\Hom_{k}(\RHom_{\bm{\Pi}_{2}(F)}(\bm{\Pi}_{2}(F),S_{v}),S_{v})\overset{\rm\textbf{L}}{\ten}_{\bm{\Pi_{2}}(F)} X\xrightarrow{+1} ,$$
	which is equal to
	$$ T\overset{\rm\textbf{L}}{\ten}_{\bm{\Pi_{2}}(F)} X\ra X\ra\Hom_{k}(\RHom_{\bm\Pi_{2}(F)}(X,S_{v}),S_{v})\xrightarrow{+1} .$$
	This shows that we have a natural isomorphism of functors
	$$ T\overset{\rm\textbf{L}}{\ten}_{\bm{\Pi_{2}}(F)}(?)\iso t_{S_{v}}^{-1}(?)\colon\cd(\bm{\Pi_{2}}(F))\ra\cd(\bm{\Pi_{2}}(F)) .$$

	On the other hand, the vector space $ \RHom_{\bm{\Pi}_{2}(F)}(\bm{\Pi}_{2}(F),S_{v}) $ is isomorphic to $ k $. The map $ \mathrm{ev}' $ is just the following map
	\[
	\begin{tikzcd}
		\mathrm{ev'}:\bm{\Pi}_{2}(F)=\displaystyle\bigoplus_{i\neq v} e_{i}\bm{\Pi}_{2}(F)\oplus e_{v}\bm{\Pi}_{2}(F)\arrow[r,"{(0,\,\pi_{v})}"]&S_{v},
	\end{tikzcd}
	\]
	where $ \pi_{v} $ is the canonical projection morphism $ e_{v}\bm{\Pi}_{2}(F)\ra S_{v} $.

   By Subsection~\ref{subsection:Cofibrant resolutions of simples over a tensor algebra}, we have a short exact sequence in $ \cc(\bm\Pi_{2}(F)) $
		$$ 0\ra \ker(\pi_{v})\ra e_{v}\bm{\Pi}_{2}(F)\xrightarrow{\pi_{v}} S_{v}\ra0.$$
   Explicitly, we have
		$$ \ker(\pi_{1})=\bigoplus_{\rho\in Q_{1}:t(\rho)=v}\rho e_{s(\rho)}\bm{\Pi}_{2}(F)=r_{v}(e_{v}\bm{\Pi}_{2}(F))\oplus\bigoplus_{a\in F_{1}:s(a)=v}\tilde{a}(e_{t(a)}\bm{\Pi}_{2}(F)) $$ with the induced differential. Here $ r_{v} $ and $ \tilde{a} $ are the arrows in $ \bm\Pi_{2}(F) $ (see Definition~\ref{Def: derived preprojective algebra}). We use the assumption that $ v $ is a frozen source, so there are no arrows in $ F_{1} $ with target $ v $.
		
		Then it is easy to see that $ \ker(\pi_{v}) $ is isomorphic to the mapping cone of the morphism below
		$$ e_{v}\bm{\Pi}_{2}(F)\ra\bigoplus_{a\in F_{1}:s(a)=v}e_{t(a)}\bm{\Pi}_{2}(F) ,$$ where the corresponding component of the map is the left multiplication by $ a $. Thus, $ T $ is isomorphic to the dg bimodule $ \displaystyle\bigoplus_{i\neq v} e_{i}\bm{\Pi}_{2}(F)\oplus\cone (e_{v}\bm{\Pi}_{2}(F)\xrightarrow{(a)}\bigoplus_{a\in F_{1}:s(a)=v}e_{t(a)}\bm{\Pi}_{2}(F)) $. By the construction of $ I'^{*} $, the composition $ I'^{*}\circ\mathrm{can}^{-1} $ is exactly the tensor functor $ T\overset{\rm\textbf{L}}{\ten}_{\bm{\Pi_{2}}(F)}(?)\simeq t_{S_{v}}^{-1}(?) $.

	\end{proof}
	
	Similarly, if $ v $ is a frozen sink, we have the following dual of Theorem~\ref{Thm:categorification of ice mutaion1}.
	\begin{Thm}\label{Thm:categorification of ice mutaion2}
		Suppose that $ v $ is a frozen sink in $ Q $. Write $ (Q',F',W')=\tilde{\mu}_{v}(Q,F,W) $. Let $ \bm\Gamma_{rel}=\bm\Gamma_{rel}(Q,F,W) $ and $ \bm\Gamma'_{rel}=\bm\Gamma_{rel}(Q',F',W') $ be the complete relative Ginzburg dg algebras associated to $ (Q,F,W) $ and $ (Q',F',W') $ respectively. We have a triangle equivalence
		$$ \Psi_{-}:\cd(\bm\Gamma'_{rel})\ra\cd(\bm\Gamma_{rel}),$$
		which sends the $ \bm\Gamma'_{i} $ to $ \bm\Gamma_{i} $ for $ i\neq v $ and to the shifted cone
		$$ \Si^{-1}\cone(\bigoplus_{\alpha}\bm\Gamma_{s(\alpha)}\to\bm\Gamma_{v}),$$ where we have a summand $ \bm\Gamma_{s(\alpha)} $ for each arrow $ \alpha $ of $ F $ with target $ v $ and the corresponding component of the map is the left multiplication by $ \alpha $. The functor $ \Psi $ restricts to a triangle equivalence from $ \per(\bm\Gamma'_{rel}) $ to $ \per(\bm\Gamma_{rel}) $ and from $ \pvd(\bm\Gamma'_{rel}) $ to $ \pvd(\bm\Gamma_{rel}) $. Moreover, the following square commutes up to isomorphism
		\[
		\begin{tikzcd}
			\cd(\bm\Pi_{2}(F'))\arrow[r,"G'^{*}"]\arrow[d,swap,"\mathrm{can}"]&\cd(\bm\Gamma'_{rel})\arrow[d,"\Psi_{-}"]\\
			\cd(\bm\Pi_{2}(F))\arrow[d,swap,"t_{S_{v}}"]&\cd(\bm\Gamma_{rel})\arrow[d,equal]\\
			\cd(\bm\Pi_{2}(F))\arrow[r,swap,"G^{*}"]&\cd(\bm\Gamma_{rel}),
		\end{tikzcd}
		\]
		where $\mathrm{can}$ is the canonical functor which identifies $ \bm\Pi_{2}(F') $ with $ \bm\Pi_{2}(F) $ and $ t_{S_{k}} $ is the twist functor with respect to the $ 2 $-spherical object $ S_{v} $, which give rise to a triangle
		$$ \RHom_{\bm\Pi_{2}(F)}(S_{v},X)\ten_{k}S_{v}\ra X\ra t_{S_{v}}(X)\ra\Si \RHom_{\bm\Pi_{2}(F)}(S_{v},X)\ten_{k}S_{v} $$ for each object $ X $ of $ \cd(\bm\Pi_{2}(F)) $.	
	\end{Thm}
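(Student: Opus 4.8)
The plan is to run the proof of Theorem~\ref{Thm:categorification of ice mutaion1} with the orientation reversed, using the tilting object built from the frozen arrows \emph{into} $v$ rather than out of it. Since $v$ is a frozen sink, every arrow of $Q$ incident with $v$ is a frozen arrow with target $v$; put $M' = \bigoplus_{j\neq v} P_j \oplus M'_v$ where $P_j = e_j\widehat{kQ}$ and $M'_v$ is defined by the exact sequence
\[
0 \to M'_v \to \bigoplus_{\alpha\in F_1:\,t(\alpha)=v} P_{s(\alpha)} \to P_v \to 0,
\]
the map to $P_v$ having components the left multiplications by the $\alpha$. As in the Remark following Theorem~\ref{Thm:equivalence non-frozen}, $M'$ is a tilting object of $\cd(\widehat{kQ})$, so $A' := \End_{\widehat{kQ}}(M')$ is Morita equivalent to $\widehat{kQ}$; I would record this by a Morita dg functor $J:\ca'\to\ca$ between the corresponding full dg subcategories of $\cc_{pc}^{dg}(A')$ and $\cc_{pc}^{dg}(\widehat{kQ})$. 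Performing the same construction inside $\widehat{kF}$, with $N' = \bigoplus_{j\neq v} e_j\widehat{kF}\oplus N'_v$ and $N'_v$ defined by the analogous exact sequence, yields a Morita dg functor $I:\cb'\to\cb$ and a commutative square of dg functors $I,J,i',i$ exactly as in the proof of Theorem~\ref{Thm:categorification of ice mutaion1}. Because $M'_v$ is now a \emph{submodule} rather than a quotient, it is quasi-isomorphic to $\Si^{-1}\cone\bigl(\bigoplus_{\alpha\in F_1:\,t(\alpha)=v} P_{s(\alpha)}\to P_v\bigr)$, and this shift is the source of the $\Si^{-1}$ in the statement.

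Next I would apply Theorem~\ref{Thm:Derived equivalence} to this square with the Hochschild class $\xi = B(W)$ and its image $\xi'$, obtaining a commutative square of relative $3$-Calabi--Yau completions whose vertical arrows $I',J'$ are again Morita functors. By the computation of Proposition~\ref{Prop: CY completion is Ginzburg} (carried out with the reversed orientation), $\bm\Pi_3(\ca',\cb',\xi')$ is quasi-equivalent to $\bm\Gamma_{rel}(\tilde\mu_v(Q,F,W)) = \bm\Gamma'_{rel}$, while $\bm\Pi_3(\ca,\cb,\xi)$ is Morita equivalent to $\bm\Gamma_{rel}$, $\bm\Pi_2(\cb)$ is Morita equivalent to $\bm\Pi_2(F)$ and $\bm\Pi_2(\cb') = \bm\Pi_2(F')$. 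Passing to derived categories gives the triangle equivalence $\Psi_- := J'^{*}:\cd(\bm\Gamma'_{rel})\to\cd(\bm\Gamma_{rel})$; tracing the images of the free modules $e_iA'$ and using the exact sequence above shows $\Psi_-$ sends $\bm\Gamma'_i$ to $\bm\Gamma_i$ for $i\neq v$ and $\bm\Gamma'_v$ to $\Si^{-1}\cone\bigl(\bigoplus_{\alpha\in F_1:\,t(\alpha)=v}\bm\Gamma_{s(\alpha)}\to\bm\Gamma_v\bigr)$, and that $\Psi_-$ restricts to equivalences on $\per$ and on $\pvd$ (the latter via smoothness and Proposition~\ref{Prop:pseudocompact for smooth dg}).

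It then remains to identify $I'^{*}\circ\mathrm{can}^{-1}$ with the twist $t_{S_v}$. Following the last part of the proof of Theorem~\ref{Thm:categorification of ice mutaion1}, I would write $I'^{*}\circ\mathrm{can}^{-1}$ as $T'\lten_{\bm\Pi_2(F)}(-)$ for the bimodule $T' = \bigoplus_{i\neq v} e_i\bm\Pi_2(F)\oplus\Si^{-1}\cone\bigl(\bigoplus_{a\in F_1:\,t(a)=v} e_{s(a)}\bm\Pi_2(F)\to e_v\bm\Pi_2(F)\bigr)$. Using the cofibrant resolution of $S_v$ over $\bm\Pi_2(F)$ from Subsection~\ref{subsection:Cofibrant resolutions of simples over a tensor algebra} — here, $v$ being a \emph{sink} of $F$, the module $\ker(\pi_v)$ is assembled from the loop $r_v$ and the arrows $a\in F_1$ with $t(a)=v$ — one identifies $T'$ with the shifted cone on the coevaluation $\RHom_{\bm\Pi_2(F)}(S_v,\bm\Pi_2(F))\ten_k S_v\to\bm\Pi_2(F)$; the $2$-Calabi--Yau property together with \cite[Lemma 4.8]{GrantMarsh2017} then yields a natural isomorphism $T'\lten_{\bm\Pi_2(F)}(-)\simeq t_{S_v}(-)$ realising the triangle $\RHom_{\bm\Pi_2(F)}(S_v,X)\ten_k S_v\to X\to t_{S_v}(X)\to\Si\RHom_{\bm\Pi_2(F)}(S_v,X)\ten_k S_v$, and commutativity of the two remaining squares is then formal. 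The main obstacle is precisely this last bookkeeping: one must verify that for a frozen \emph{sink} the cofibrant resolution of $S_v$ produces the coevaluation triangle for $t_{S_v}$ rather than for its inverse (as in the frozen-source case), i.e. that reversing the arrows incident with $v$ in $\widetilde F$ interchanges the roles of $\RHom(S_v,-)\ten_k S_v$ and $\Hom_k(\RHom(-,S_v),S_v)$.

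An alternative route is to apply Theorem~\ref{Thm:categorification of ice mutaion1} directly to the opposite ice quiver with potential $(Q^{\mathrm{op}},F^{\mathrm{op}},W^{\mathrm{op}})$, in which $v$ becomes a frozen source and $\bm\Gamma_{rel}(Q,F,W)^{\mathrm{op}}\cong\bm\Gamma_{rel}(Q^{\mathrm{op}},F^{\mathrm{op}},W^{\mathrm{op}})$, and then dualise; but this merely relocates the same issue to checking that the opposite of $t^{-1}_{S_v}$ on $\cd(\bm\Pi_2(F^{\mathrm{op}}))$ corresponds to $t_{S_v}$ on $\cd(\bm\Pi_2(F))$, so I would prefer the direct argument above.
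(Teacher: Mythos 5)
The paper offers no proof of this theorem: it appears right after the words ``Similarly, if $v$ is a frozen sink, we have the following dual of Theorem~\ref{Thm:categorification of ice mutaion1}'', so the intended argument is exactly the dualization you carry out, and your proposal matches it step for step (dual tilting object built from the frozen arrows \emph{into} $v$, Theorem~\ref{Thm:Derived equivalence} applied to the resulting Morita square, identification of the completion with $\bm\Gamma'_{rel}$ as in Proposition~\ref{Prop: CY completion is Ginzburg}, and identification of $I'^{*}\circ\mathrm{can}^{-1}$ with $t_{S_{v}}$ via the cofibrant resolution of $S_{v}$ and the $2$-Calabi--Yau property). Your direct route is preferable to the ``opposite quiver'' alternative for the reason you give, and your resolution of the flagged bookkeeping is right: for a sink $v$ of $F$ the arrows of $\widetilde{F}$ with target $v$ are $r_{v}$ and the $a\in F_{1}$ with $t(a)=v$ (no $\tilde{a}$'s), which is what makes the evaluation triangle for $t_{S_{v}}$, rather than its inverse, appear.

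Two points deserve correction. First, a frozen sink may still have \emph{unfrozen} arrows with source $v$ (only arrows with target $v$ are forced to be frozen), so your claim that every arrow of $Q$ incident with $v$ is a frozen arrow with target $v$ is too strong; this is harmless, since only the arrows into $v$ enter the construction of $M'_{v}$. Second, the sequence $0\to M'_{v}\to\bigoplus_{\alpha}P_{s(\alpha)}\to P_{v}\to 0$ is not exact at $P_{v}$: the image of left multiplication by the arrows into $v$ consists of the paths of length at least one ending at $v$ and misses the lazy path $e_{v}$, so the cokernel is $S_{v}$. Consequently the honest kernel is not quasi-isomorphic to $\Si^{-1}\cone\bigl(\bigoplus_{\alpha}P_{s(\alpha)}\to P_{v}\bigr)$, and the object you actually need is the two-term complex itself, for which the tilting conditions must be checked directly. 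The Remark following Theorem~\ref{Thm:equivalence non-frozen} makes the identical loose claim in the unfrozen case, so you are consistent with the source, but a written-out proof should take $M'_{v}$ (and likewise $N'_{v}$) to be the two-term complex rather than the kernel of a purported surjection.
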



\begin{thebibliography}{PTW02}
	\bibitem{adachiDiscretenessSiltingObjects2019}
	Takahide Adachi, Yuya Mizuno, and Dong Yang, \emph{Discreteness of silting objects and $ t $-structures in triangulated categories}, Proc. Lond. Math. Soc. (3) 118 (2019), no. 1, 1–42.
		
	\bibitem{amiotUbiquityGeneralizedCluster2011a}
	Claire Amiot, Idun Reiten, and Gordana Todorov, \emph{The ubiquity of generalized cluster categories}, Adv. Math. 226 (2011), no. 4, 3813–3849.
	
	\bibitem{baurDimerModelsCluster2016}
	Karin Baur, Alastair D. King, and Robert J. Marsh, \emph{Dimer models and cluster categories of Grassmannians}, Proc. Lond. Math. Soc. (3) 113 (2016), no. 2, 213–260.
 
    \bibitem{vandenberghCalabiYauAlgebrasSuperpotentials2015}
    Michel Van den Bergh,
    \emph{Calabi--Yau algebras and superpotentials}, Selecta Math. (N.S.) 21 (2015), no. 2, 555–603.
    
    \bibitem{bernsteinCoxeterFunctorsGabriel1973}
    I. N. Bernstein, I. M. Gel’fand, and V. A. Ponomarev, \emph{Coxeter functors and Gabriel’s theorem}. Russ. Math. Surv., 28(2):17–32, 1973.
		
	\bibitem{bozecRelativeCriticalLoci2020}
	Tristan Bozec, Damien Calaque, and Sarah Scherotzke, \emph{Relative critical loci and quiver moduli}, to appear in Annales Scientifiques de l'ENS, arXiv:2006.01069.
	
	\bibitem{bravRelativeCalabiYau2019}
	Christopher Brav and Tobias Dyckerhoff, \emph{Relative Calabi–Yau structures}, Compos. Math. 155 (2019), no. 2, 372–412.
	
	\bibitem{christGinzburgAlgebrasTriangulated2021}
	Merlin Christ, \emph{Ginzburg algebras of triangulated surfaces and perverse schobers}, Forum Math. Sigma 10 (2022), Paper No. e8, 72 pp.
	
	\bibitem{derksenQuiversPotentialsTheir2008}
	Harm Derksen, Jerzy Weyman, and Andrei Zelevinsky, \emph{Quivers with potentials and their representations I: Mutations},
	Selecta Math. (N.S.) 14 (2008), no. 1, 59–119.
	
	
	\bibitem{fominClusterAlgebrasFoundations2002}
	Sergey Fomin and Andrei Zelevinsky, \emph{Cluster algebras I: Foundations}, J. Amer. Math. Soc., 15(2):497–529, 2002.
	
	
	\bibitem{fraserQuasihomomorphismsClusterAlgebras2016}
	Chris Fraser, \emph{Quasi-homomorphisms of cluster algebras}, Adv. in Appl. Math. 81 (2016), 40–77.
	
	\bibitem{fraserPositroidClusterStructures2020}
	Chris Fraser and Melissa Sherman-Bennett, \emph{Positroid cluster structures from relabeled plabic graphs}, Algebr. Comb. 5 (2022), no. 3, 469–513.
	
	\bibitem{gabrielCategoriesAbeliennes1962}
	P. Gabriel, \emph{Des cat\'egories ab\'eliennes}, Bull. Soc. Math. France 90 (1962), 323–448.
	
	\bibitem{ginzburgCalabiyauAlgebras2006a}
	Victor Ginzburg, \emph{Calabi--Yau algebras}, arxiv preprint math/0612139, 2006.
	
	
	\bibitem{GrantMarsh2017}
	Joseph Grant and Bethany Rose Marsh, \emph{
	Braid groups and quiver mutation},
	Pacific J. Math. 290 (2017), no. 1, 77–116.
	
	\bibitem{happelDerivedCategoryFinitedimensional1987}
	Dieter Happel, \emph{On the derived category of a finite-dimensional algebra}, Comment. Math. Helv. 62 (1987), no. 3, 339–389.
	
	\bibitem{kellerDerivingDGCategories1994a}
	Bernhard Keller, \emph{Deriving DG categories}, Ann. Sci. École Norm. Sup. (4) 27 (1994), no. 1, 63–102.
	
	\bibitem{kellerDifferentialGradedCategories2006}
	Bernhard Keller, \emph{On differential graded categories}, international Congress of Mathematicians. Vol. II, 151–190, Eur. Math. Soc., Zürich, 2006.
	
	\bibitem{kellerDeformedCalabiYau2011}
	Bernhard Keller, \emph{Deformed Calabi–Yau completions}, with an appendix by Michel Van den Bergh. J. Reine Angew. Math., 2011(654):125–180, 2011.
	
	\bibitem{kellerDerivedEquivalencesMutations2011}
	Bernhard Keller and Dong Yang, \emph{Derived equivalences from mutations of quivers with potential}, Adv. Math. 226 (2011), no. 3, 2118–2168.
	
	\bibitem{wuyilinMutationsFrozenVertices}
	Bernhard Keller and Yilin Wu, \emph{Relative cluster categories and Higgs categories with infinite-dimensional morphism spaces}, in preparation.
	
	\bibitem{labardini-fragosoQuiversPotentialsAssociated2009}
	Daniel Labardini-Fragoso, \emph{Quivers with potentials associated to triangulated surfaces}, Proc. Lond. Math. Soc. (3) 98 (2009), no. 3, 797–839.
	
	\bibitem{lodayCyclicHomology2013}
	Jean-Louis Loday, \emph{Cyclic Homology}, Grundlehren der Mathematischen Wissenschaften, vol.
	301, Springer, Berlin, 1992.
	
	\bibitem{positselskiTwoKindsDerived2011}
	Leonid Positselski, \emph{Two Kinds of Derived Categories, Koszul Duality, and Comodule-Contramodule Correspondence}, Mem. Amer. Math. Soc. 212 (2011), no. 996, vi+133 pp.
	
	\bibitem{presslandCalabiYauPropertiesPostnikov2019}
	Matthew Pressland, \emph{Calabi-Yau properties of Postnikov diagrams}, Forum Math. Sigma 10 (2022), Paper No. e56, 31 pp.
	
	\bibitem{presslandMutationFrozenJacobian2020}
	Matthew Pressland, \emph{Mutation of frozen Jacobian algebras}, J. Algebra 546 (2020), 236–273.
	
	\bibitem{seibergElectricmagneticDualitySupersymmetric1995}
	N. Seiberg, \emph{Electric-magnetic duality in supersymmetric non-Abelian gauge theories}, Nuclear Phys. B 435 (1995), no. 1-2, 129–146.
	
	\bibitem{toenDerivedAlgebraicGeometry2014a}
	Bertrand Toen, \emph{Derived algebraic geometry}, EMS Surv. Math. Sci. 1 (2014), no. 2, 153–240.
	
	\bibitem{wuyilinHiggs}
	Yilin Wu, \emph{Relative cluster categories and Higgs categories}, arXiv:2109.03707 [math.RT].
	

	
	\bibitem{yeungRelativeCalabiYauCompletions2016}
	Wai-kit Yeung, \emph{Relative Calabi–Yau completions}, arxiv:1612.06352 [math], 2016.
	
    \end{thebibliography}
\end{document}